\newtheorem{theorem}{Theorem}[section]
\newtheorem{definition}[theorem]{Definition}
\newtheorem{lemma}[theorem]{Lemma}
\newtheorem{question}[theorem]{Question}
\newtheorem{fact}[theorem]{Fact}
\newtheorem{remark}[theorem]{Remark}
\begin{document}
\title[]{Approximation Theory and Elementary Submodels}

\author{Sean Cox}
\email{scox9@vcu.edu}
\address{
Department of Mathematics and Applied Mathematics \\
Virginia Commonwealth University \\
1015 Floyd Avenue \\
Richmond, Virginia 23284, USA 
}

\date{\today}

\thanks{Partially supported by NSF grant DMS-2154141.  We thank Jan \v{S}aroch and Jan Trlifaj for patient explanations of certain homological arguments.}

\subjclass[2010]{ 16E30,16D40,	03E75, 18G25, 16B70}

\begin{abstract}
\emph{Approximation Theory} uses nicely-behaved subcategories to understand entire categories, just as  projective modules are used to approximate arbitrary modules in classical homological algebra.  We use set-theoretic \emph{elementary submodel arguments} to give new, short proofs of well-known theorems in approximation theory, sometimes with stronger results.   
\end{abstract}

\maketitle

\tableofcontents

\section{Introduction}\label{sec_Intro}

Homological algebra is based on the nicely-behaved class of projective modules (direct summands of free modules).  Any module can be approximated by \emph{projective resolutions}, which yield important ring-theoretic and module-theoretic invariants.  If $\mathcal{F}$ is another class of modules, one could attempt homological algebra ``relative to $\mathcal{F}$", with $\mathcal{F}$ playing the role that the projective modules play in the ordinary setting.  This originated with Eilenberg-Moore's 1965 AMS Memoir \cite{MR178036}.  However, for this to be successful, it is essential that $\mathcal{F}$ be a \emph{precovering class}, which ensures that any module can be nicely approximated by modules from $\mathcal{F}$, and that certain homological information is invariant with respect to the choice of approximation.  The notion of a precovering class is due to Enochs, and is closely related to earlier work of Auslander, Salce, and others.  Precovering classes are also called \emph{right-approximating} classes.   For this exposition, we stick mainly to categories of modules, though the topic (\emph{approximation theory}) extends much further, even to some non-abelian categories.  The standard modern references are Enochs-Jenda~\cite{MR1753146} and G\"obel-Trlifaj~\cite{MR2985554}.

Approximation theory really took off around the year 2000, when Bican-El Bashir-Enochs~\cite{MR1832549} proved the \emph{Flat Cover Conjecture}, which asserts that over any ring, the class of flat modules is a covering class (an even stronger property than being a precovering class).  Their proof used a crucial result of Eklof-Trlifaj~\cite{MR1798574} that was later realized to be related to Quillen's \emph{Small Object Argument} from homotopy theory.  The Eklof-Trlifaj argument quickly led to the now-ubiquitous notion of a \textbf{deconstructible class} of modules, which is a sufficient condition for the class to be a precovering class.  

The definition of deconstructibility is a ``bottom-up" definition, asserting that modules in the class can be built up in a certain transfinite manner, from a fixed set of modules in the class (see Section \ref{sec_Decon}).  In \cite{Cox_MaxDecon}, we gave a new ``top-down" characterization of deconstructibility that is reproduced here, in slightly weaker but simpler form, as Theorem \ref{thm_DeconChar} on page \pageref{thm_DeconChar}.  It says that a class $\mathcal{C}$ of $R$-modules is deconstructible if and only if both of the following hold:
\begin{enumerate}[label=(\Alph*)]

 \item\label{item_TheSetTheoryPart} there is a cardinal $\kappa=\kappa_{\mathcal{C}}$ such that \textbf{if} $\mathfrak{N}$ is a ``typical" $\Sigma_1$-elementary submodel of the set-theoretic universe such that $\mathfrak{N} \cap \kappa$ is transitive,\footnote{I.e., viewing $\kappa$ as an ordinal, $\mathfrak{N} \cap \kappa$ is downward closed, so either $\mathfrak{N} \cap \kappa \in \kappa$ or $\kappa \subset \mathfrak{N}$.   For $\kappa = \omega_1$ this is automatic for typical $\mathfrak{N}$, but for $\kappa \ge \omega_2$ more care is involved because of possibility of \emph{Chang's Conjecture}.  Note that the hypotheses of \ref{item_TheSetTheoryPart} do not place cardinality restrictions on $\mathfrak{N}$, but in certain situations we can restrict attention to $\mathfrak{N}$ of size $<\kappa$ (see Theorem \ref{thm_TurnStatToClub}).} \textbf{then} the following implication holds for any $C$:
\[
C \in \mathfrak{N} \cap \mathcal{C} \implies \left( \mathfrak{N} \cap C \in \mathcal{C} \text{ and } \frac{C}{\mathfrak{N} \cap C} \in \mathcal{C} \right).
\]
 \item\label{item_FiltrationClosedPart} $\mathcal{C}$ is \emph{filtration-closed} (a.k.a.\ ``closed under transfinite extensions", see Section \ref{sec_Decon}).
\end{enumerate}

The real effort-saving part of this characterization is part \ref{item_TheSetTheoryPart}.  For example, we give a new proof of:
\begin{theorem}[\v{S}\v{t}ov\'{\i}\v{c}ek~\cite{MR2476814}]\label{thm_Stovicek_Intro}
If $\left( \mathcal{C}_i \right)_{i \in I}$ is a set-indexed collection of deconstructible classes, then $\bigcap_{i \in I} \mathcal{C}_i$ is deconstructible.
\end{theorem}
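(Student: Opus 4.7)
The plan is to apply the characterization from Theorem \ref{thm_DeconChar} to $\mathcal{D} := \bigcap_{i \in I} \mathcal{C}_i$, reducing the entire theorem to verifying that $\mathcal{D}$ inherits properties \ref{item_TheSetTheoryPart} and \ref{item_FiltrationClosedPart} from the $\mathcal{C}_i$.

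Verifying \ref{item_FiltrationClosedPart} is essentially formal: if $M$ admits a continuous filtration whose successive factors lie in $\mathcal{D}$, then for each $i \in I$ the very same filtration witnesses $M$ as a transfinite extension with factors in $\mathcal{C}_i$. Since each $\mathcal{C}_i$ is filtration-closed, $M \in \mathcal{C}_i$ for every $i$, hence $M \in \mathcal{D}$.

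For \ref{item_TheSetTheoryPart}, since $I$ is a set we may choose a cardinal $\kappa_{\mathcal{D}}$ with $\kappa_{\mathcal{D}} \ge \kappa_{\mathcal{C}_i}$ for every $i \in I$. Let $\mathfrak{N}$ be a sufficiently typical $\Sigma_1$-elementary submodel such that $\mathfrak{N} \cap \kappa_{\mathcal{D}}$ is transitive; we may arrange that the single parameter $(\mathcal{C}_i, \kappa_{\mathcal{C}_i})_{i \in I}$ belongs to $\mathfrak{N}$, so that $\mathfrak{N}$ is simultaneously typical with respect to each individual $\mathcal{C}_i$. For any $i$ the set $\mathfrak{N} \cap \kappa_{\mathcal{C}_i} = (\mathfrak{N} \cap \kappa_{\mathcal{D}}) \cap \kappa_{\mathcal{C}_i}$ is a downward-closed subset of an ordinal and hence transitive, so the hypotheses of \ref{item_TheSetTheoryPart} are met for $\mathcal{C}_i$ as well. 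Then for any $C \in \mathfrak{N} \cap \mathcal{D}$, applying \ref{item_TheSetTheoryPart} for each $\mathcal{C}_i$ individually yields $\mathfrak{N} \cap C \in \mathcal{C}_i$ and $C/(\mathfrak{N} \cap C) \in \mathcal{C}_i$; intersecting over $i \in I$ delivers $\mathfrak{N} \cap C, \ C/(\mathfrak{N} \cap C) \in \mathcal{D}$, as required.

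The only delicate point is the uniform transfer of the elementary-submodel hypotheses in \ref{item_TheSetTheoryPart}, and this is where set-hood of $I$ is essential: it lets a single cardinal $\kappa_{\mathcal{D}}$ dominate all the $\kappa_{\mathcal{C}_i}$ at once, and a single choice of typical $\mathfrak{N}$ serve for every $i$ simultaneously. The payoff of using Theorem \ref{thm_DeconChar} is precisely that no filtration of $C$ ever needs to be constructed — let alone one compatible with each $\mathcal{C}_i$'s own deconstructibility data — in contrast with the original filtration-based proof of \v{S}\v{t}ov\'{\i}\v{c}ek.
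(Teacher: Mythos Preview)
Your approach is the paper's approach: verify filtration-closure trivially, then use the elementary-submodel characterization to transfer clause~\ref{item_TheSetTheoryPart} to the intersection. Two technical points are handled more carefully in the paper's rigorous proof (Section~\ref{sec_SetSizedIntersect}). First, you need $\kappa_{\mathcal{D}} > |I|$ in addition to $\kappa_{\mathcal{D}} \ge \kappa_{\mathcal{C}_i}$: merely having the sequence $(\mathcal{C}_i,\kappa_{\mathcal{C}_i})_{i\in I}$ as an \emph{element} of $\mathfrak{N}$ does not put each individual $\mathcal{C}_i \cap H_\theta$ into $\mathfrak{N}$ unless $I \subset \mathfrak{N}$, and that in turn needs $|I|<\kappa_{\mathcal{D}}$ (together with $I \in \mathfrak{N}$ and transitivity of $\mathfrak{N}\cap\kappa_{\mathcal{D}}$). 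Second, the phrase ``we may arrange'' is in tension with verifying clause~\ref{item_MainChar} of Theorem~\ref{thm_DeconChar}, which quantifies over \emph{all} $\mathfrak{N} \prec (H_\theta,\in,R,\kappa_{\mathcal{D}},\mathcal{D}\cap H_\theta)$ and does not allow you to add parameters to the signature. The paper resolves this by verifying clause~\ref{item_non_diag} instead: it passes to a larger $H_\Omega$ and builds a single structure $\mathfrak{B}$ containing the whole sequence $\langle \mathfrak{A}_i^\theta\rangle_{i\in I}$ as a constant, so that any $\mathfrak{N}\prec\mathfrak{B}$ with transitive $\mathfrak{N}\cap\kappa$ automatically satisfies $\mathfrak{N}\cap H_\theta \prec \mathfrak{A}_i^\theta$ for every $i$. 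With those two fixes, your argument is the paper's.
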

\noindent To verify that $\bigcap_{i \in I} \mathcal{C}_i$ satisfies clause \ref{item_TheSetTheoryPart} from the characterization of deconstructibility, let $\kappa_i$ be a cardinal witnessing that \ref{item_TheSetTheoryPart} holds of $\mathcal{C}_i$, and let $\kappa$ be any cardinal larger than $|I|$ and all the $\kappa_i$'s; this is possible because $I$ is a set (not a proper class).  We claim $\kappa$ satisfies clause \ref{item_TheSetTheoryPart} for the class $\bigcap_{i \in I} \mathcal{C}_i$.  One just has to show that \textbf{if} $\mathfrak{N}$ is a ``typical" $\Sigma_1$-elementary submodel of the universe whose intersection with $\kappa$ is transitive and \textbf{if} 
\begin{equation}\label{eq_C_in_N_Demo}
C \in \mathfrak{N} \cap \bigcap_{i \in I} \mathcal{C}_i,
\end{equation}
\textbf{then}
\begin{equation}\label{eq_IntroToVerify}
\mathfrak{N} \cap C \in \bigcap_{i \in I} \mathcal{C}_i \text{ and } \frac{C}{\mathfrak{N} \cap C} \in \bigcap_{i \in I} \mathcal{C}_i.
\end{equation}
But since $\mathfrak{N} \cap \kappa$ is transitive, it follows that $\mathfrak{N}$'s intersection with each $\kappa_i$ is transitive (possibly $\kappa_i$ itself), and hence $\mathfrak{N}$ satisfies clause \ref{item_TheSetTheoryPart} for each $\mathcal{C}_i$.  And assumption \eqref{eq_C_in_N_Demo} ensures that $C \in \mathfrak{N} \cap \mathcal{C}_i$ for each $i$.  So for each $i \in I$, $\mathfrak{N} \cap C$ and $\frac{C}{\mathfrak{N} \cap C}$ are both in $\mathcal{C}_i$.  This verifies \eqref{eq_IntroToVerify}.  We glossed over a few details, mainly in what is meant by ``typical", but those few sentences give the central idea.  See Section \ref{sec_SetSizedIntersect} for the rigorous proof, which is still quite short.\footnote{Clause \ref{item_FiltrationClosedPart} from page \pageref{item_FiltrationClosedPart}  is trivially inherited by $\mathcal{C}$ from the $\mathcal{C}_i$'s here.  Clause \ref{item_FiltrationClosedPart} is typically verified purely via homological algebraic arguments, with less set-theoretic involvement.  Clause \ref{item_FiltrationClosedPart} automatically holds for some of the most widely-studied classes, those of the form ``left half of a cotorsion pair" (a.k.a. ``(left) root of Ext"), because of \emph{Eklof's Lemma} (Section \ref{sec_EklofTrlifaj}).}  

The characterization of deconstructibility also yields a quick proof of:
\begin{theorem}[Bican-El Bashir-Enochs~\cite{MR1832549}]
The class of flat\footnote{A module is flat if tensoring with it preserves exactness of all short exact sequences.} modules is deconstructible.
\end{theorem}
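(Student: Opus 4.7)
The plan is to verify both clauses of Theorem \ref{thm_DeconChar} for the class of flat $R$-modules. Clause \ref{item_FiltrationClosedPart} is immediate from classical homological algebra: the flat modules form the left half of the \emph{flat cotorsion pair} (whose right half is the class of cotorsion modules), so Eklof's Lemma (Section \ref{sec_EklofTrlifaj}) delivers closure under transfinite extensions.

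For clause \ref{item_TheSetTheoryPart}, I would set $\kappa := |R|^+$. Let $\mathfrak{N}$ be a ``typical'' $\Sigma_1$-elementary submodel of the universe with $R \in \mathfrak{N}$ and $\mathfrak{N} \cap \kappa$ transitive; since $|R| < \kappa$, it follows that $|R| \subseteq \mathfrak{N}$, and combining with a bijection $R \to |R|$ sitting inside $\mathfrak{N}$ gives $R \subseteq \mathfrak{N}$. For a flat $F \in \mathfrak{N}$, the subset $\mathfrak{N} \cap F$ is then an $R$-submodule of $F$ (closed under $+_F$ and scalar multiplication). The core step is to show that $\mathfrak{N} \cap F \hookrightarrow F$ is a \emph{pure} embedding. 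By Cohn's equational criterion, this reduces to showing that any finite system
\[
\sum_{j=1}^n r_{ij} x_j = a_i \quad (i = 1, \ldots, m),
\]
with $r_{ij} \in R$ and $a_i \in \mathfrak{N} \cap F$, solvable in $F$, is solvable in $\mathfrak{N} \cap F$. But solvability in $F$ is a $\Sigma_1$ (indeed bounded) assertion whose parameters $F, r_{ij}, a_i$ all lie in $\mathfrak{N}$, so $\Sigma_1$-elementarity produces a solution inside $\mathfrak{N}$, hence in $\mathfrak{N} \cap F$.

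Once purity is established, the sequence $0 \to \mathfrak{N} \cap F \to F \to F/(\mathfrak{N}\cap F) \to 0$ is a pure short exact sequence with flat middle term, so a standard argument via the long exact sequence of $\mathrm{Tor}$ (using that tensoring any module with a pure exact sequence preserves exactness, combined with the vanishing of $\mathrm{Tor}_{\geq 1}(-, F)$) yields that both $\mathfrak{N} \cap F$ and $F/(\mathfrak{N} \cap F)$ are flat, verifying clause \ref{item_TheSetTheoryPart}. The most delicate point is really only the purity step, and that reduces to observing that an existential statement bounded by a set in $\mathfrak{N}$ is absolute---precisely the sort of bookkeeping that the framework of Theorem \ref{thm_DeconChar} is designed to streamline.
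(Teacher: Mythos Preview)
Your proposal is correct and follows essentially the same approach as the paper: set $\kappa = |R|^+$, use elementarity to get purity of $\mathfrak{N} \cap F$ in $F$ (the paper packages this as Lemma~\ref{lem_ElemPureReflect}, which you reprove inline via Cohn's criterion), and then invoke the standard fact that pure submodules of flat modules, and the corresponding quotients, are flat (the paper cites Lam, you sketch the Tor argument). The only cosmetic difference is in justifying filtration-closure: the paper uses the more elementary observation that flat modules are closed under extensions and direct limits, whereas you appeal to Eklof's Lemma via the flat cotorsion pair---both are fine, though your route tacitly uses that $\mathrm{Flat} = {}^\perp(\mathrm{Cotorsion})$, which is a slightly heavier fact than needed here.
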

To verify clause \ref{item_TheSetTheoryPart} for the class of flat modules, we let $\kappa:=|R|^+$ where $R$ is the underlying ring.  We want to verify that \textbf{if} $\mathfrak{N}$ is a ``typical" $\Sigma_1$ elementary submodel of the universe whose intersection with $\kappa$ is transitive, and $F \in \mathfrak{N}$ is a flat $R$-module, \textbf{then} both $\mathfrak{N} \cap F$ and $\frac{F}{\mathfrak{N} \cap F}$ are flat.  But one easily checks that since $\mathfrak{N}$ is sufficiently elementary in the universe of sets, then $\mathfrak{N} \cap F$ is a \emph{pure} (in fact, elementary) submodule of $F$; see Lemma \ref{lem_ElemPureReflect}.  Then, by a standard module-theoretic fact, it immediately follows that both $\mathfrak{N} \cap F$ and $\frac{F}{\mathfrak{N} \cap F}$ are flat.  Hence, $\kappa$ witnesses that clause \ref{item_TheSetTheoryPart} holds for the class of flat modules.

This expository paper is about the applications of set-theoretic elementary submodel arguments to approximation theory and relative homological algebra, and particularly applications of the characterization of deconstructibility such as the ones sketched above.  Section \ref{sec_ElemSub} covers basic examples, and in Theorem \ref{thm_DeconChar} gives the precise statement of the top-down characterization of deconstructibility.  Section \ref{sec_ApplicationsBasic} uses this characterization to re-prove many results in the literature about deconstructibility, roots of Ext, and complete cotorsion pairs; the various subsections are largely independent of each other.  Section \ref{sec_TraceComplicated} introduces the use of elementary submodels with complexes of modules, and Section \ref{sec_MoreApps} gives applications, including some new results (e.g. Theorem \ref{thm_GP_ZFC}) and clearer statements of some theorems that were implicit in \cite{Cox_MaxDecon} (e.g., Theorem \ref{thm_TurnStatToClub}).  Section \ref{sec_Salce} discusses independence of Salce's Problem.  Section \ref{sec_Questions} poses some questions.  Appendix \ref{app_ProofDeconChar} provides a proof of Theorem \ref{thm_DeconChar} that is simpler than the original from \cite{Cox_MaxDecon}.

Elementary submodel arguments are well known in set theory, topology, and infinite combinatorics, where they have been the subject of several nice expositions, e.g., Dow~\cite{MR1031969}, Geschke~\cite{MR1950041}, Just-Weese~\cite{MR1474727}, Soukup~\cite{MR2800978} (and they are used extensively in set theory proper, e.g., in Shelah's theory of \emph{proper forcing}).  The following quote from Alan Dow's classic exposition of the method (for topology) perfectly matches our goals in this paper (for approximation theory):

\begin{quote}
\itshape `` \dots elementary submodels provide:
\begin{enumerate}
 \item  a convenient shorthand encompassing all standard
closing-off arguments;
 \item  a powerful technical tool which can be avoided
but often at great cost in both elegance and
clarity; and
 \item a powerful conceptual tool providing greater
insight into the structure of the set-theoretic
universe.
\end{enumerate}

I hope to convince some readers of the validity of
these points simply by (over-)using elementary submodels
in proving some new and old familiar results."

\hfill --Dow~\cite{MR1031969}
\end{quote}

\section{Elementary submodels in algebra}\label{sec_ElemSub}

By \textbf{ring} we will mean a (not necessarily commutative) ring with a multiplicative identity 1, though the latter is not essential to many of the arguments.  If $R$ is a ring, \textbf{$\boldsymbol{R}$-module} will denote a left $R$-module, unless otherwise indicated.  If $R$ is the ring $\mathbb{Z}$ of integers, the $R$-modules are just the abelian groups.  An $R$-module homomorphism is an $R$-linear map between $R$-modules.  A \textbf{short exact sequence} of modules is a sequence
\[
\begin{tikzcd}
0 \arrow[r] & A \arrow[r, "\iota"] & B \arrow[r, "\pi"] & C \arrow[r] & 0
\end{tikzcd}
\]
such that the image of one map equals the kernel of the next, in which case $\iota$ is injective, $\pi$ is surjective, and $C \simeq \frac{B}{\text{im} \ \iota}$.

If $R$ is a ring, $M$ is an $R$-module, and $X \subset M$, $\boldsymbol{\langle X \rangle_R}$ denotes the $R$-span of $X$; i.e., the $R$-submodule of $M$ generated by $X$.  We omit the $R$ subscript when it is clear from the context.  If $I$ is a set and $M_i$ is a module for each $i \in I$, $\boldsymbol{\bigoplus_{i \in I} M_i}$ denotes the submodule of $\Pi_{i \in I} M_i$ consisting of those sequences with finite support.  An $R$-module $F$ is \textbf{free} if there is a subset $B \subset F$ that is $R$-linearly independent and spans $F$; such a $B$ is called an $R$-basis of $F$.  If $F$ is free with basis $B$ and $0 \ne x \in F$, $\boldsymbol{\textbf{sprt}_B(x)}$ will denote the unique finite subset of $B$ such that $x \in \left\langle \text{sprt}_B(x)\right\rangle$.  A module $A$ is a \textbf{direct summand} of $B$ if there exists a module $X$ such that $A \oplus X = B$.  A module is \textbf{projective} if it is a direct summand of a free module.  For ``hereditary" rings such as $\mathbb{Z}$, projectivity is equivalent to freeness, and submodules of free modules are free; but this can fail in non-hereditary rings, such as $\mathbb{Z}/4\mathbb{Z}$.

The universe of sets is denoted by $V$.  For a regular uncountable cardinal $\kappa$, $\wp_\kappa(X)$ denotes the set $\{ x \in \wp(X) \ : \ |x|<\kappa\}$, and $\wp^*_\kappa(X)$ denotes the set of $x \in \wp_\kappa(X)$ such that $x \cap \kappa$ is transitive.  A collection $S \subseteq \wp_\kappa(X)$ is \textbf{stationary} (in $\wp_\kappa(X)$) if it meets every closed unbounded subset of $\wp_\kappa(X)$ in the sense of Jech~\cite{MR1940513}; this is equivalent (see \cite{MattHandbook}) to requiring that for every first order structure $\mathfrak{A}=(X,\dots)$ in a countable signature, there is an $x \in S \cap \wp^*_\kappa(X)$ such that $x \prec \mathfrak{A}$.\footnote{For $\kappa = \omega_1$ it does not matter whether one uses $\wp^*_\kappa(X)$ or $\wp_\kappa(X)$ in the definition of stationarity.  But for $\kappa \ge \omega_2$, it can matter, depending on whether or not \emph{Chang's Conjecture} holds.}  

For a cardinal $\theta$, $H_\theta$ will denote the collection of sets of hereditary cardinality less than $\theta$.  $H_\theta$ is a transitive set, and just as with the cumulative hierarchy (typically denoted by $V_\alpha$'s), every set is a member of some $H_\theta$.  We list some standard facts about $H_\theta$ and elementary submodels of $H_\theta$, all of which can be found in either Kunen~\cite{MR597342}, Jech~\cite{MR1940513}, and other expository papers on the elementary submodel technique (e.g.\ Dow~\cite{MR1031969}, Geschke~\cite{MR1950041}, Soukup~\cite{MR2800978}).  If $\mathfrak{N}=(N,\in) \prec (H_\theta,\in)$,\footnote{I.e., if $(N,\in)$ is an elementary substructure of $(H_\theta,\in)$, where $\in$ is the 2-ary membership relation.} we will typically not distinguish between $\mathfrak{N}$ and its universe $N$.  Readers more comfortable with the cumulative hierarchy can just as well use $V_\theta$'s in place of $H_\theta$'s throughout this survey, with minor adjustments to the arguments.  Formulas in the language of set theory are called $\Delta_0$ ($=\Sigma_0 = \Pi_0$) if all quantifiers are bounded (i.e., either of the form $\forall x \in u$ or $\exists x \in u$), and for $n \ge 1$, a formula is $\Sigma_n$ if it is of the form
\[
\exists x_1 \dots \exists x_k \ \psi(x_1,\dots, x_k, \dots )
\]
for some $\Pi_{n-1}$ formula $\psi$, and a formula is $\Pi_n$ if it is of the form 
\[
\forall x_1 \dots \forall x_k \ \tau(x_1,\dots, x_k, \dots )
\]
for some $\Sigma_{n-1}$ formula $\tau$.
\begin{fact}\label{fact_BasicFactsElemSub}
Suppose $\theta$ is an uncountable cardinal.  Then:
\begin{enumerate}
 \item $(H_\theta,\in)$ is a $\Sigma_1$-elementary submodel of the universe $(V,\in)$ of sets.  If $\theta$ is also regular, then $(H_\theta,\in)$ is a model of ZFC$^-$ (the ZFC axioms with the powerset axiom removed). 
 \item Suppose $\mathfrak{N} \prec (H_\theta,\in)$.
 \begin{enumerate}
 \item\label{item_FiniteSubset} If $b$ is a finite subset of $\mathfrak{N}$, or a finite sequence of elements of $\mathfrak{N}$, then $b$ is an element of $\mathfrak{N}$.
 \item\label{item_CountableElementIsSubset} $\mathfrak{N} \cap \omega_1$ is always transitive, and
\[
\left( x \in \mathfrak{N} \text{ and } x \text{ is countable } \right) \implies \ x \subset \mathfrak{N}.
\] 
 
 \item If $\kappa$ is an infinite cardinal with $\kappa \le \theta$, and $\mathfrak{N} \cap \kappa$ is transitive---i.e., either $\mathfrak{N} \cap \kappa \in \kappa$ or $\kappa \subset \mathfrak{N}$---then for any $x$,
 \[
\left( x \in \mathfrak{N} \text{ and } |x|<\kappa \right) \implies x \subset \mathfrak{N}.
 \]
 
 \end{enumerate}

 \item If $\kappa \le \theta$ are both regular and uncountable, then:
 \begin{enumerate}
  \item  There are stationarily many $\mathfrak{N} \prec (H_\theta,\in)$ of size $<\kappa$ whose intersection with $\kappa$ is transitive; i.e., $\{ \mathfrak{N} \in \wp^*_\kappa(H_\theta) \ : \ \mathfrak{N} \prec (H_\theta,\in) \}$ is stationary.  
 \item  If $\kappa^{<\lambda} =\kappa$, then stationarily many of these $\mathfrak{N}$ are also \textbf{$\boldsymbol{<\lambda}$-closed}, meaning that ${}^{<\lambda} \mathfrak{N} \subseteq \mathfrak{N}$, i.e., every $<\lambda$-sized subset of $\mathfrak{N}$ is an \emph{element} of $\mathfrak{N}$.  
 \item For an infinite $\lambda$, the set of $<\lambda$-closed members of $\wp^*_\kappa(H_\theta)$ contains a closed unbounded set if and only if $\lambda = \aleph_0$.
 \end{enumerate}
\end{enumerate}
\end{fact}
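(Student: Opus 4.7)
The plan is to verify each clause using three standard toolkits: transitivity of $H_\theta$ combined with rank-controlled witnessing for (1); elementarity applied to uniquely-definable objects for (2); and chain constructions of elementary submodels for (3). The main obstacle will be the negative direction of (3)(c).

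For (1), transitivity of $H_\theta$ yields $\Delta_0$-absoluteness, so the upward half of $\Sigma_1$-elementarity is immediate. For the downward half, given $V\models\exists x\,\phi(x,\vec a)$ with $\phi$ a $\Delta_0$ formula and $\vec a\in H_\theta$, one chooses a witness of minimal rank; a standard L\'evy-reflection argument specialized to $\Sigma_1$ then shows such a witness has hereditary cardinality $<\theta$ and hence lies in $H_\theta$. The $\mathrm{ZFC}^-$ claim for regular $\theta$ is verified axiom by axiom: extensionality, foundation, pairing, union, and infinity are routine from transitivity and $\theta>\omega$; Separation uses $\Delta_0$-absoluteness; and Replacement is the only nontrivial axiom, where regularity of $\theta$ is used to ensure that images of size-$<\theta$ sets retain hereditary cardinality $<\theta$.

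Items (2)(a)--(c) all follow the same template: identify an object uniquely definable from parameters in $\mathfrak{N}$ and invoke elementarity. For (a), the set $\{x_1,\dots,x_n\}$ (or the corresponding tuple) is uniquely definable from its entries, and similarly for finite sequences. For (b), $\omega_1$ is the unique least uncountable ordinal and hence lies in $\mathfrak{N}$; for countable $x\in\mathfrak{N}$, elementarity produces a bijection $f\colon\omega\to x$ in $\mathfrak{N}$, and combined with $\omega\subset\mathfrak{N}$ (since each $n<\omega$ is definable) this gives $x=\operatorname{ran}(f)\subset\mathfrak{N}$. For (c), take an infinite cardinal $\mu<\kappa$ and a bijection $f\colon\mu\to x$ in $\mathfrak{N}$; transitivity of $\mathfrak{N}\cap\kappa$ forces $\mu\subset\mathfrak{N}$, and then $x=\operatorname{ran}(f)\subset\mathfrak{N}$ by the same argument.

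For (3)(a), given a function $F\colon [H_\theta]^{<\omega}\to H_\theta$ witnessing a proposed club, I would build a $\subseteq$-chain $\mathfrak{N}_0\subset\mathfrak{N}_1\subset\cdots$ of length $\omega$ of size-$<\kappa$ elementary submodels of $H_\theta$, closed at each stage under $F$, under Skolem functions for $(H_\theta,\in)$, and critically under a ``fill-in'' function that adjoins $\alpha$ as a subset whenever $\alpha\in\mathfrak{N}_n\cap\kappa$ (possible while maintaining size $<\kappa$ by regularity of $\kappa$ and $|\alpha|<\kappa$). The union yields the desired $\mathfrak{N}$. For (b), augment the closure at each stage by adjoining all $<\lambda$-sized subsets of $\mathfrak{N}_n$; the hypothesis $\kappa^{<\lambda}=\kappa$ is precisely what keeps the cardinality under control. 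The positive direction of (c), $\lambda=\aleph_0$, is immediate because closure under finite subsets is preserved under $\subseteq$-directed unions of length $<\kappa$ and easily enforced by one round of iteration (using regularity of $\kappa$). The negative direction is the crux: for $\lambda\ge\aleph_1$, one must show that any candidate club $C$ contains a member that is not $<\lambda$-closed. The standard strategy is a diagonal construction inside $C$ producing a model $N\in C$ together with a $<\lambda$-sized subset canonically associated to $N$ (for instance, the enumeration of an $\omega$-chain approximating $N$, or a cofinal sequence in $\sup(N\cap\kappa)$) whose membership in $N$ would violate foundation; this rules out $<\lambda$-closure and completes the obstruction.
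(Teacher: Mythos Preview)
The paper does not prove Fact~\ref{fact_BasicFactsElemSub} at all; it simply records it as a ``standard fact'' with citations to Kunen, Jech, Dow, Geschke, and Soukup. Your proposal therefore cannot be compared to the paper's proof in any meaningful sense---you are supplying arguments the paper deliberately omits.

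That said, your sketch is essentially correct and follows the standard lines found in those references. Two small remarks. First, in part~(1), the ``witness of minimal rank'' idea is a red herring: a minimal-rank witness need not have small hereditary cardinality. The correct argument (which you gesture at with ``L\'evy-reflection'') is to take a countable Skolem hull of $\operatorname{trcl}(\{\vec a\})$ inside some $V_\alpha$ reflecting the formula, then Mostowski-collapse; the collapse fixes $\vec a$ and lands inside $H_\theta$. Second, your argument for the negative direction of (3)(c) is right in spirit but could be stated more cleanly: build an $\omega$-chain inside the given club with strictly increasing $\mathfrak{N}_n\cap\kappa$, let $N$ be the union, and observe that any countable cofinal sequence $s\subset N\cap\kappa$ cannot lie in $N$, since otherwise $\sup s = N\cap\kappa$ would be an element of $N\cap\kappa$, contradicting foundation. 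This is exactly the ``violation of foundation'' you allude to.
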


\subsection{Basic examples}

Given an $R$-module $A$, we will often deal with submodules of $A$ of the form $\mathfrak{N} \cap A$ for some $\mathfrak{N} \prec (H_\theta,\in)$ such that $A \in \mathfrak{N}$, and $R$ is both an element and subset of $\mathfrak{N}$.  This might appear to be a very special collection of submodules of $A$.  But standard set-theoretic facts tell us that \textbf{almost every} submodule of $A$ is of this form!  More precisely:  fix $A$, and consider any $H_\theta$ with $A \in H_\theta$, and let $\kappa$ be any regular uncountable cardinal larger than $|R|$.  Then 
\begin{align*}
\big\{ B \subset A   \ : \ & B = \mathfrak{N} \cap A \text{ for some } \mathfrak{N} \prec (H_\theta,\in) \text{ such that }  \\
& \{ A,R \} \cup R \subset \mathfrak{N} \text{ and } |\mathfrak{N}|<\kappa  \big\}
\end{align*}
is a \emph{closed and unbounded} subset of $\wp_\kappa(A)$ in the sense of Jech~\cite{MR1940513}.  This follows from the generalized Fodor's Lemma.  We will not need this fact (or Fodor's Lemma) in this paper, but the interested reader can consult the introductory sections of Foreman~\cite{MattHandbook}.

Before covering the applications that combine elementary submodel arguments with the notion of deconstructibility, we present a few lemmas that give the flavor of using elementary submodels in algebra. If $F$ is free and $X$ is a submodule of $F$, it is \emph{not} necessarily true that $X$ is free;\footnote{E.g., if $R=\mathbb{Z}/4\mathbb{Z}$, then $F:=R$ is a free $R$-module, but $X:=2\mathbb{Z}/4 \mathbb{Z}$ is a non-free $R$-submodule of $F$.} and even if both $X$ and $F$ are free, it does not follow that $F/X$ is free.\footnote{E.g., if $R=\mathbb{Z}$, then $2\mathbb{Z}$ is a free $R$-submodule of the free $R$-module $\mathbb{Z}$ (i.e., they are both free abelian groups), but $\mathbb{Z}/2\mathbb{Z}$ is not free.}  But ``almost all" submodules of free modules behave nicely.  The use of elementary submodels in the lemma below is quite an overkill---one could rephrase part \eqref{item_WheneverN} in terms of a filter directly on $\wp(F)$---but thinking in terms of elementary submodels will especially help us later, when dealing with more complicated objects.

\begin{lemma}\label{lem_FreeTrace}
Suppose $R$ is a ring.  The following are equivalent for any $R$-module $F$:
\begin{enumerate}
 \item $F$ is free
 \item\label{item_WheneverN} Whenever $R,F \in \mathfrak{N} \prec (H_\theta,\in)$, then both $\langle \mathfrak{N} \cap F \rangle$ and $\frac{F}{\langle \mathfrak{N} \cap F \rangle}$ are free.
\end{enumerate}
\end{lemma}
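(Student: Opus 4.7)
The plan is to prove $(1)\Rightarrow(2)$ by using elementarity to locate a basis of $F$ inside $\mathfrak{N}$ and then exploiting the finiteness of supports, and to prove $(2)\Rightarrow(1)$ via the trivial choice $\mathfrak{N} = H_\theta$.

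For the forward direction, assume $F$ is free. The assertion ``$F$ is a free $R$-module'' is $\Sigma_1$ with parameters $R,F$, so by Fact~\ref{fact_BasicFactsElemSub}(1) it already holds in $H_\theta$; by elementarity of $\mathfrak{N}$ in $H_\theta$, there is some $B \in \mathfrak{N}$ that $\mathfrak{N}$ believes to be an $R$-basis of $F$, and absoluteness of the $\Delta_0$ property ``$B$ is an $R$-basis of $F$'' then guarantees that $B$ really is a basis. The key geometric point is that for any nonzero $x \in \mathfrak{N} \cap F$, the set $\mathrm{sprt}_B(x)$ is the unique finite subset of $B$ in whose span $x$ lies, so it is definable from $x, B, F, R$ and thus lies in $\mathfrak{N}$; being finite, Fact~\ref{fact_BasicFactsElemSub}(2)(a) forces $\mathrm{sprt}_B(x) \subseteq \mathfrak{N}$, so $x$ is an $R$-linear combination of elements of $\mathfrak{N} \cap B$. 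This yields $\langle \mathfrak{N} \cap F \rangle = \langle \mathfrak{N} \cap B \rangle$, which is free with basis $\mathfrak{N} \cap B$. For the quotient, the decomposition $F = \langle \mathfrak{N} \cap B \rangle \oplus \langle B \setminus \mathfrak{N} \rangle$ yields $F / \langle \mathfrak{N} \cap F \rangle \cong \langle B \setminus \mathfrak{N} \rangle$, which is free on $B \setminus \mathfrak{N}$.

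For the backward direction, pick any uncountable cardinal $\theta$ large enough that $R, F \in H_\theta$, and take $\mathfrak{N} := H_\theta$ (which is trivially an elementary submodel of itself and contains $R$ and $F$ as elements). Transitivity of $H_\theta$ gives $F \subseteq H_\theta$, so $\mathfrak{N} \cap F = F$ and hence $\langle \mathfrak{N} \cap F \rangle = F$; the hypothesis of (2) then directly asserts that $F$ is free.

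The only subtle step is the forward direction's use of $\Sigma_1$-elementarity to produce a basis inside $\mathfrak{N}$, after which the finite-support observation reduces the module-theoretic content to routine manipulations with subsets of a fixed basis. The backward direction is essentially a sanity check: by choosing $\mathfrak{N}$ maximally, the nontrivial content evaporates. The lemma thus serves mainly as a template for the more substantive elementary-submodel arguments used later in the paper, where the quotient $F/\langle\mathfrak{N}\cap F\rangle$ can no longer be split off so cheaply.
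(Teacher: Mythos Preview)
Your proof is correct and follows essentially the same approach as the paper: both directions use the same ideas (take $\mathfrak{N}=H_\theta$ for $(2)\Rightarrow(1)$; for $(1)\Rightarrow(2)$, pull a basis $B$ into $\mathfrak{N}$ by elementarity, use finiteness of supports to get $\langle\mathfrak{N}\cap F\rangle=\langle\mathfrak{N}\cap B\rangle$, and split $F$ along the partition of $B$). Your support argument is in fact slightly cleaner than the paper's, since you work directly with $x\in\mathfrak{N}\cap F$ rather than with a general element of $\langle\mathfrak{N}\cap F\rangle$, which suffices because $\langle\mathfrak{N}\cap B\rangle$ is already a submodule.
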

\begin{proof}
The $\Leftarrow$ direction is trivial, since if \ref{item_WheneverN} holds and we take $\mathfrak{N}:=(H_\theta,\in)$, then $\langle \mathfrak{N} \cap F \rangle = F$ and $\frac{F}{\langle \mathfrak{N} \cap F \rangle} = 0$.  Even if one only assumed \ref{item_WheneverN} to hold for, say, countable $\mathfrak{N}$, we'd still get freeness of $F$, since the presence of a single submodule $X$ such that both $X$ and $F/X$ are free implies that $F$ is free (since freeness is closed under extensions).

To prove the $\Rightarrow$ direction, assume $F$ is free.  Then there is a free $R$-basis $B \subset F$ for $F$, and any such $B$ is an element of $H_\theta$.  The statement ``$B$ is an $R$-basis of $F$" is easily downward absolute from $(V,\in)$ to $(H_\theta,\in)$ (it is a $\Sigma_0$ statement in the parameters $R$, $B$, and $F$).  So $(H_\theta,\in) \models$ ``there exists a free $R$-basis for $F$".  Because $R, F \in \mathfrak{N} \prec (H_\theta,\in)$, there is such a $B$ with $B \in \mathfrak{N}$.  Trivially, since $B$ is partitioned by $\mathfrak{N} \cap B$ and $B \setminus \mathfrak{N}$,
\begin{equation}
F = \langle \mathfrak{N} \cap B \rangle \oplus \langle B \setminus \mathfrak{N} \rangle,
\end{equation}
which implies that $\frac{F}{\langle \mathfrak{N} \cap B \rangle} \simeq \langle B \setminus \mathfrak{N} \rangle$.  Then
\begin{equation}\label{eq_QuotFree}
\langle \mathfrak{N} \cap B \rangle \text{ and } \frac{F}{\langle \mathfrak{N} \cap B \rangle} \simeq \langle B \setminus \mathfrak{N} \rangle \text{ are free},
\end{equation}
because $B$ is linearly independent (which is obviously inherited by subsets of $B$).

We claim that
\begin{equation}\label{eq_CapB_CapF}
\langle \mathfrak{N} \cap B \rangle = \langle \mathfrak{N} \cap F \rangle,
\end{equation}
which, together with \eqref{eq_QuotFree}, will complete the proof.  For the nontrivial direction of \eqref{eq_CapB_CapF}, let $x \in \langle \mathfrak{N} \cap F \rangle$.  Then $x = \sum_{i=1}^k r_i x_i$ for some $x_i \in \mathfrak{N} \cap F$ and $r_i \in R$, but since we are not assuming $R \subset \mathfrak{N}$, some of the $r_i$ might fail to be in $\mathfrak{N}$.  Now
\begin{equation}\label{eq_SprtContainedUnion}
\text{sprt}_B(x) \subseteq \bigcup_{i=1}^k \text{sprt}_B(x_i)
\end{equation}
because $x$ is in the span of $\{ x_1, \dots, x_k \}$.  And for each $i \in \{ 1, \dots, k \}$, $\text{sprt}_B(x_i)$ is an element of $\mathfrak{N}$, because both $B$ and $x_i$ are elements of $\mathfrak{N}$, and because $\text{sprt}_B(x_i)$ is definable over $(H_\theta,\in)$ from the parameters $B$, $R$, and $x_i$.  So, the right side of \eqref{eq_SprtContainedUnion}, which we'll denote by $S$, is a finite union of elements of $\mathfrak{N}$, and hence $S$ is both an element and subset of $\mathfrak{N}$ by Fact \ref{fact_BasicFactsElemSub}.  So the $B$-support of $x$ is contained in $\mathfrak{N} \cap B$, showing that $x \in \langle \mathfrak{N} \cap B \rangle$. 
\end{proof}

Lemma \ref{lem_FreeTrace} is one of the few places in this paper where we do \emph{not} assume that $R$ is a subset of $\mathfrak{N}$; typically we will make that assumption to simplify arguments.

Elementary submodels distribute across quotients, as long as the numerator and denominator are elements of the elementary submodel:
\begin{lemma}\label{lem_IsoQuotient}
Suppose $A$ is an $R$-submodule of $B$ and
\[
\{ A,B,R \} \cup R \subset \mathfrak{N} \prec (H_\theta,\in).
\]
Then $\mathfrak{N} \cap \frac{B}{A} \simeq \frac{\mathfrak{N} \cap B}{\mathfrak{N} \cap A}$.
\end{lemma}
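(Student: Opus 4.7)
The plan is to show that the restriction of the canonical quotient map $\pi \colon B \twoheadrightarrow B/A$ to $\mathfrak{N} \cap B$ has kernel exactly $\mathfrak{N} \cap A$ and image exactly $\mathfrak{N} \cap (B/A)$, and then apply the First Isomorphism Theorem.

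First I would note that since $A, B, R$ are all in $\mathfrak{N}$ and the quotient $B/A$ is definable from these parameters, elementarity gives $B/A \in \mathfrak{N}$; similarly the function $\pi \colon B \to B/A$ is an element of $\mathfrak{N}$. Thus for any $b \in \mathfrak{N} \cap B$, the class $\pi(b) = b+A$ lies in $\mathfrak{N} \cap (B/A)$, so the restriction $\pi \restriction (\mathfrak{N} \cap B)$ is a well-defined $R$-linear map $\mathfrak{N} \cap B \to \mathfrak{N} \cap (B/A)$. Its kernel is the set of $b \in \mathfrak{N} \cap B$ with $b \in A$, which is exactly $\mathfrak{N} \cap A$.

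The main step is surjectivity, and this is where elementarity does the real work. Fix any element $x \in \mathfrak{N} \cap (B/A)$. In the universe $V$, $x$ is a coset of $A$ in $B$, so there exists $b \in B$ with $\pi(b) = x$; this assertion is expressible as a $\Sigma_0$ formula over $(H_\theta, \in)$ in the parameters $B$, $A$, $\pi$, $x$ (for instance, ``$\exists b \in B \ b \in x$''), all of which lie in $\mathfrak{N}$. By elementarity, there is such a witness $b \in \mathfrak{N}$, giving $b \in \mathfrak{N} \cap B$ with $\pi(b) = x$.

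Combining these observations, the First Isomorphism Theorem yields
\[
\frac{\mathfrak{N} \cap B}{\mathfrak{N} \cap A} \simeq \mathfrak{N} \cap \frac{B}{A}.
\]
No obstacles of substance remain; the only care required is the distinction between an element of $B/A$ (an equivalence class, i.e.\ a subset of $B$) and a representative in $B$, which is handled cleanly by the elementarity argument above.
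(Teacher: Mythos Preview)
Your proof is correct and follows essentially the same approach as the paper: both restrict the quotient map $b \mapsto b+A$ to $\mathfrak{N} \cap B$, identify the kernel as $\mathfrak{N} \cap A$, use elementarity of $\mathfrak{N}$ to establish surjectivity onto $\mathfrak{N} \cap (B/A)$, and conclude via the First Isomorphism Theorem.
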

\begin{proof}
This really comes down to the fact that in $R$-Mod, colimits (so in particular, cokernels) are $\Sigma_1$-expressible in the language of set theory over weak fragments of ZFC.  But we give a direct proof instead.  Define
\[
\varphi: \mathfrak{N} \cap B \to \mathfrak{N} \cap \frac{B}{A}
\]
by $b \mapsto b + A$; note that the output really is an element of $\mathfrak{N} \cap \frac{B}{A}$, because $\{ b, A,B \} \subset \mathfrak{N} \prec (H_\theta,\in)$, so $\mathfrak{N}$ sees the module $B/A$ and the coset $b + A$.  And $\varphi$ is surjective, because for any coset $z \in \mathfrak{N} \cap \frac{B}{A}$, by elementarity of $\mathfrak{N}$ there is some $b \in \mathfrak{N} \cap B$ such that $z = b + A$.  And clearly $\text{Ker} \ \varphi = \mathfrak{N} \cap A$.  The lemma then follows by the first isomorphism theorem.
\end{proof}

\noindent If $R$ is $\kappa$-Noetherian and $\mathfrak{N} \cap \kappa$ is transitive, one can remove the assumption that $R \subset \mathfrak{N}$ from Lemma \ref{lem_IsoQuotient}, but one has to close each side of the isomorphism in the statement of the lemma under scalar multiplication; see Lemma 3.9 of \cite{Cox_MaxDecon}.  We will not use that generalization here.

An $R$-module $B$ is a \textbf{pure} submodule of $C$ if whenever $M$ is a finite matrix with entries from $R$, $b$ is a column vector from $B$, and $Mx = b$ has a solution in $C$, then it has a solution in $B$.  An embedding $e: B \to C$ is pure if $e[B]$ is pure in $C$.

\begin{lemma}\label{lem_ElemPureReflect}
Suppose $R$ is a ring, $\theta$ is a regular uncountable cardinal, and $\mathfrak{N} \prec (H_\theta,\in)$ is such that $R$ is both an element \textbf{and subset} of $\mathfrak{N}$.  If $A$ is an $R$-module and $A \in \mathfrak{N}$, then $\mathfrak{N} \cap A$ is a pure submodule of $A$.  
\end{lemma}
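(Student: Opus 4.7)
The plan is to unpack the definition of purity and use the $\Sigma_1$-elementarity of $H_\theta$ in $V$ together with the ordinary elementarity of $\mathfrak{N}$ in $H_\theta$. First, I would observe as a warmup that $\mathfrak{N} \cap A$ is indeed an $R$-submodule of $A$: addition on $A$ and each scalar multiplication map $x \mapsto rx$ for $r \in R \subseteq \mathfrak{N}$ are functions available to $\mathfrak{N}$ (since $A, R \in \mathfrak{N}$ and $R \subseteq \mathfrak{N}$), so the values they take on elements of $\mathfrak{N} \cap A$ stay in $\mathfrak{N} \cap A$ by elementarity.

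To verify purity, I would fix a finite $m \times n$ matrix $M$ with entries in $R$ and a column vector $b \in (\mathfrak{N} \cap A)^m$ for which $Mx = b$ admits some solution $x \in A^n$. The first real step is to note that both $M$ and $b$ are elements of $\mathfrak{N}$: every entry of $M$ lies in $R \subseteq \mathfrak{N}$ and $M$ is a finite object, so Fact \ref{fact_BasicFactsElemSub}(\ref{item_FiniteSubset}) places $M \in \mathfrak{N}$; the same reasoning applies to $b$.

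The central step is then to note that the statement
\[
\exists x_1,\dots,x_n \in A \ \ \bigwedge_{i \le m} \ \sum_{j \le n} M_{ij} x_j = b_i
\]
is $\Sigma_1$ (in fact, bounded by $A$) in the parameters $A, M, b$, and the module operations on $A$, all of which lie in $\mathfrak{N}$. Since the statement holds in $V$, by $\Sigma_1$-absoluteness between $V$ and $H_\theta$ (Fact \ref{fact_BasicFactsElemSub}(1)) it holds in $H_\theta$; and then by elementarity of $\mathfrak{N}$ in $H_\theta$, it holds in $\mathfrak{N}$. So there exist $x_1,\dots,x_n \in \mathfrak{N} \cap A$ solving the system, proving purity.

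The main obstacle, and it is a mild one, is the syntactic bookkeeping: one must see that the solvability assertion really is first-order with all parameters in $\mathfrak{N}$. The reason this works is that the shape $(m,n)$ of the matrix is a fixed finite pair, so the matrix equation unfolds into a finite conjunction of literal equalities whose only parameters are specific ring elements of $R \subseteq \mathfrak{N}$, specific entries of $b \in \mathfrak{N}$, and the addition and scalar-multiplication operations of $A$; absoluteness and elementarity then deliver the conclusion without further work.
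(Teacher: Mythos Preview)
Your proof is correct and follows essentially the same route as the paper's: both verify the submodule property, use Fact~\ref{fact_BasicFactsElemSub}\eqref{item_FiniteSubset} to place $M$ and $b$ inside $\mathfrak{N}$, pass the $\Sigma_0$ solvability statement from $V$ down to $H_\theta$ via $\Sigma_1$-elementarity, and then down to $\mathfrak{N}$ by full elementarity. The only difference is cosmetic ordering of the steps.
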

\begin{proof}
For expository reasons we will prove the lemma directly, though we could derive the stronger fact that $A \restriction \mathfrak{N} \prec_{\mathcal{L}_R} A$, where $\mathcal{L}_R$ is the language of $R$-modules.\footnote{$\mathcal{L}_R$ is the language of abelian groups together with, for each $r \in R$, a function symbol interpreted as scalar multiplication by $r$.  The assumption $R \cup \{R \} \subset \mathfrak{N} \prec (H_\theta,\in)$ ensures that the signature of $\mathcal{L}_R$ is both an element and subset of $\mathfrak{N}$.  Then one could use that the first order satisfaction relation is uniformly $\Sigma_0$ definable in the language of set theory (or use Skolem functions) to conclude that $A \restriction \mathfrak{N} \prec_{\mathcal{L}_R} A$ for every $R$-module $A \in \mathfrak{N}$.}

The fact that $R$ is an element and subset of $\mathfrak{N}$, together with elementarity of $\mathfrak{N}$, implies that $\mathfrak{N} \cap A$ is closed under addition and under scalar multiplication from $R$; so $\mathfrak{N} \cap A$ is a submodule of $A$.  To see that it is a pure submodule, suppose $M$ is an $m \times n$ matrix with entries from $R$, $b = [ b_1, \dots, b_m]^T$ is a column vector from $\mathfrak{N} \cap A$, and that $Mx = b$ has a solution in $A$; so
\[
(V,\in) \models \ \underbrace{\exists a_1 \in A \dots \exists a_n \in A \ \   M [a_1 \dots a_n]^T = b}_{\varphi(A,M,R,b)}.
\]
Since $\varphi$ can be expressed a $\Sigma_0$ formula in the language of set theory (in the parameters $A$, $M$, $b$, and $R$), and 
\[
(H_\theta,\in) \prec_{\Sigma_1} (V,\in),
\]
it follows that
\[
(H_\theta,\in) \models \varphi(A,M,R,b).
\]
By elementarity of $\mathfrak{N}$, and the fact that $M$ is finite and $R \subset \mathfrak{N} \prec H_\theta$, it follows from Fact \ref{fact_BasicFactsElemSub} part \eqref{item_FiniteSubset} that $M$ is an \emph{element} of $\mathfrak{N}$.  Also, $b$ is a finite sequence of elements of $\mathfrak{N}$, and hence, again by Fact \ref{fact_BasicFactsElemSub} part \eqref{item_FiniteSubset}, $b$ is an element of $\mathfrak{N}$.  So all free parameters in the formula $\varphi(A,M,R,b)$ are elements of $\mathfrak{N}$.  So by elementarity,
\[
\mathfrak{N} \models \varphi(A,M,R,b).
\]
Let $a_1, \dots, a_n \in \mathfrak{N} \cap A$ witness the existential quantifiers of $\varphi$.  Then $\vec{a}$ is a finite sequence of elements of $\mathfrak{N} \cap A$ that solve $Mx=b$.
\end{proof}

\begin{remark}
One could also prove Lemma \ref{lem_ElemPureReflect} using the characterization of purity as those embeddings whose associated short exact sequences remain exact after applying $\text{Hom}(A,-)$, for any finitely presented module $A$.\footnote{The key point is that if $R \cup \{R \} \subset \mathfrak{N} \prec (H_\theta,\in)$, then every finitely presented $R$-module has a representative that is an \emph{element} (and subset) of $\mathfrak{N}$.  Moreover, if $A,Y \in \mathfrak{N}$ with $A$ finitely presented, then $\mathfrak{N} \cap \text{Hom}^V(A,Y) = \text{Hom}^V(A, \mathfrak{N} \cap Y)$ (this equality can fail if $A$ is not finitely presented). }  Although this proof would have more easily generalized to other abelian categories, for the purposes of exposition we opted for the simpler proof of Lemma \ref{lem_ElemPureReflect}  given above, which is specific to module categories.
\end{remark}

Pure embeddings generalize to $\lambda$-pure embeddings for any infinite regular $\lambda$.  An inclusion $B \to C$ is $\lambda$-pure if for any system $\mathcal{S}$ of fewer than $\lambda$ many $R$-linear equations with parameters from $B$:  if there is a solution for the entire system in $C$, then there is a solution in $B$ too.  This is equivalent to requiring that whenever $M$ is a matrix of entries from $R$ with $<\lambda$-many rows and (wlog) $<\lambda$-many columns (and each row finitely supported), $b$ is a vector in $B$, and $Mx=b$ has a solution in $C$, then it has a solution in $B$.
 
\begin{remark}\label{rem_LambdaPureLambdaclosed}
If the $\mathfrak{N}$ from the statement of Lemma \ref{lem_ElemPureReflect} is assumed to be closed under sequences of length less than some regular cardinal $\lambda$, then the conclusions of Lemma \ref{lem_ElemPureReflect} can be strengthened by replacing every instance of ``pure" by ``$\lambda$-pure".   
\end{remark}

\subsection{Deconstructible classes}\label{sec_Decon}

Given a class $\mathcal{C}$ of $R$-modules, a \textbf{$\boldsymbol{\mathcal{C}}$-filtration} is a $\subseteq$-increasing and $\subseteq$-continuous\footnote{I.e., if $i$ is a limit ordinal then $M_i = \bigcup_{k < i} M_k$.} chain of $R$-modules
\[
\langle M_i \ : \ i < \zeta \rangle
\]
such that:
\begin{enumerate}
 \item $\zeta$ is an ordinal;
 \item $M_0 =0$;\footnote{Sometimes we allow $M_0 \in \mathcal{C}$.  In light of requirement \eqref{item_QuotRequirement}, this makes no difference to membership in $\text{Filt}(\mathcal{C})$ as defined below.} and
 \item\label{item_QuotRequirement} for each $i$ such that $i+1 < \zeta$, $M_{i+1}/M_i$ is isomorphic to some member of $\mathcal{C}$.  Sometimes we will simply write ``$M_{i+1}/M_i \in \mathcal{C}$" when we really mean it is isomorphic to some member of $\mathcal{C}$.
\end{enumerate}
A module $M$ is said to be \textbf{$\mathcal{C}$-filtered} if it is the union of some $\mathcal{C}$-filtration (note that if it is the union of a filtration of successor length, then $M$ is simply the last module in the $\mathcal{C}$-filtration).  The class of $\mathcal{C}$-filtered modules is denoted $\textbf{Filt}\boldsymbol{(\mathcal{C})}$.  Clearly $\mathcal{C} \subseteq \text{Filt}(\mathcal{C})$ because each $C \in \mathcal{C}$ can be viewed as the union of the $\mathcal{C}$-filtration $\langle 0, C \rangle$ of length 2.  We say that $\mathcal{C}$ is \textbf{filtration-closed} (or \textbf{closed under transfinite extensions}) if the reverse inclusion holds too; i.e., if and only if $\mathcal{C} = \text{Filt}(\mathcal{C})$.  For any class $\mathcal{C}$, $\text{Filt}(\mathcal{C})$ is closed under filtrations; i.e.
\begin{equation}\label{eq_FiltSquared}
\text{Filt}(\text{Filt}(\mathcal{C})) = \text{Filt}(\mathcal{C}). \tag{$\star$}
\end{equation}
To see the nontrivial $\subseteq$ direction, suppose $M$ is the union of some continuous chain $\langle M_i \ : \ i \le \zeta \rangle$ with each $M_{i+1}/M_i \in \text{Filt}(\mathcal{C})$; so $M_{i+1}/M_i$ is the union of some $\mathcal{C}$-filtration $\vec{Z}^i$.  Then by the Correspondence Theorem, we can view $\vec{Z}^i$ as a smooth chain of the form
\[
\left\langle N^i_\xi/M_i \ : \ \xi < \zeta^i \right\rangle
\]
where for each $\xi$ (such that $\xi +1 < \zeta^i$), 
\[
\frac{N^i_{\xi+1}/M_i}{N^i_\xi/M_i} \in \mathcal{C}.
\] 
The quotient above is isomorphic to $N^i_{\xi+1}/N^i_\xi$; so $N^i_{\xi+1}/N^i_\xi \in \mathcal{C}$.  Concatenating all the $\vec{N}^i$'s together yields a $\mathcal{C}$-filtration with the same union as the original filtration $\vec{M}$.

A class $\mathcal{C}$ is called \textbf{deconstructible} if there is a subset $\mathcal{C}_0 \subseteq \mathcal{C}$---where $\mathcal{C}_0$ is a set, not a proper class!---such that
\begin{equation}\label{eq_EqualsFilt}
\mathcal{C} = \text{Filt}(\mathcal{C}_0).
\end{equation}
This definition agrees with, for example, \v{S}\v{t}ov\'{\i}\v{c}ek~\cite{MR3010854}, Saor\'{\i}n-\v{S}\v{t}ov\'{\i}\v{c}ek~\cite{MR2822215}, \v{S}aroch-Trlifaj~\cite{vsaroch2023deconstructible}, and most of the newer papers on the topic.  In other sources, such as G\"obel-Trlifaj~\cite{MR2985554} and the author's own \cite{Cox_MaxDecon}, deconstructiblity was defined by replacing the ``$=$" in \eqref{eq_EqualsFilt} by ``$\subseteq$".  We will refer to this notion as \textbf{weakly deconstructible}.  For many natural classes, e.g.\ the ``left roots of Ext", the notions are equivalent (by Eklof's Lemma \ref{lem_EklofLemma}).  Using \eqref{eq_FiltSquared}, it is straightforward to prove:
\begin{lemma}\label{lem_FiltOfWeakDecon}
The following are equivalent for any class $\mathcal{C}$:
\begin{enumerate}
 \item $\mathcal{C}$ is deconstructible.
 \item $\mathcal{C}$ is weakly deconstructible and filtration-closed.
\end{enumerate}
\end{lemma}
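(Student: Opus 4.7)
The proof is essentially a direct bookkeeping exercise that leverages the identity $\text{Filt}(\text{Filt}(\mathcal{C})) = \text{Filt}(\mathcal{C})$ from \eqref{eq_FiltSquared}, together with the obvious monotonicity $\mathcal{A} \subseteq \mathcal{B} \implies \text{Filt}(\mathcal{A}) \subseteq \text{Filt}(\mathcal{B})$, which is immediate from the definition of $\text{Filt}$. I do not expect any real obstacle; the whole argument is two short implications.

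For (1) $\Rightarrow$ (2), fix a set $\mathcal{C}_0 \subseteq \mathcal{C}$ witnessing deconstructibility, so that $\mathcal{C} = \text{Filt}(\mathcal{C}_0)$. Weak deconstructibility is then immediate since the same $\mathcal{C}_0$ works (one direction of the equality already gives $\mathcal{C} \subseteq \text{Filt}(\mathcal{C}_0)$). For filtration-closedness, the inclusion $\mathcal{C} \subseteq \text{Filt}(\mathcal{C})$ is automatic, and the reverse inclusion follows from the chain
\[
\text{Filt}(\mathcal{C}) = \text{Filt}(\text{Filt}(\mathcal{C}_0)) = \text{Filt}(\mathcal{C}_0) = \mathcal{C},
\]
where the first equality uses $\mathcal{C} = \text{Filt}(\mathcal{C}_0)$ and the second is \eqref{eq_FiltSquared}.

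For (2) $\Rightarrow$ (1), let $\mathcal{C}_0 \subseteq \mathcal{C}$ be a set witnessing that $\mathcal{C}$ is weakly deconstructible, so $\mathcal{C} \subseteq \text{Filt}(\mathcal{C}_0)$. By monotonicity of $\text{Filt}$ applied to $\mathcal{C}_0 \subseteq \mathcal{C}$, we have $\text{Filt}(\mathcal{C}_0) \subseteq \text{Filt}(\mathcal{C})$, and by filtration-closedness $\text{Filt}(\mathcal{C}) = \mathcal{C}$. Combining these gives $\text{Filt}(\mathcal{C}_0) \subseteq \mathcal{C} \subseteq \text{Filt}(\mathcal{C}_0)$, so $\mathcal{C} = \text{Filt}(\mathcal{C}_0)$, witnessing that $\mathcal{C}$ is deconstructible.
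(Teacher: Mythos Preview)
Your proof is correct and follows exactly the approach the paper indicates: the paper does not spell out a proof but simply says the lemma is straightforward from \eqref{eq_FiltSquared}, and your argument does precisely that, combining $\text{Filt}(\text{Filt}(\mathcal{C}_0)) = \text{Filt}(\mathcal{C}_0)$ with the obvious monotonicity of $\text{Filt}$.
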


Deconstructibility of $\mathcal{C}$ is equivalent to requiring the existence of a cardinal $\kappa$ such that
\[
\mathcal{C} = \text{Filt}\left( \{ C \in \mathcal{C} \ : \ |C|<\kappa  \} \right),
\]
and it will be convenient to have a cardinal-specific notion of deconstructibility.  Following the prevailing terminology, we will say that $\mathcal{C}$ is \textbf{$\boldsymbol{\kappa}$-deconstructible} if 
\[
\mathcal{C} = \text{Filt}\left( \underbrace{\{C \in \mathcal{C} \ : \ C \text{ is } <\kappa \text{-presented}   \}}_{=:\mathcal{C}_\kappa} \right),
\]
where ``$C$ is $<\kappa$-presented" means that $C=F/K$ for some free $F$, where both $F$ and $K$ are generated (as $R$-modules) by sets of size less than $\kappa$.  Observe that $\mathcal{C}$ is deconstructible if and only if it is $\kappa$-deconstructible for some $\kappa$. Similarly, we will say that $\mathcal{C}$ is \textbf{weakly $\boldsymbol{\kappa}$-deconstructible} if 
\[
\mathcal{C} \subseteq \text{Filt}\left( \mathcal{C}_\kappa \right),
\]
Clearly, if $\kappa_0 < \kappa_1$, then $\kappa_0$-(weak)-deconstructibility implies $\kappa_1$-(weak)-deconstructibility.

\begin{remark}\label{rem_R_less_kappa}
For this article, we will frequently assume for simplicity that $|R|<\kappa$, which ensures (assuming $\kappa$ is infinite) that for any $R$-module $C$, the following are equivalent:
\begin{itemize}
 \item $C$ is $<\kappa$-presented
 \item $C$ is $<\kappa$-generated
 \item $|C|<\kappa$.
\end{itemize}
\end{remark}

Recall from the introduction that the precovering property of a class is important for relative homological algebra.  A class $\mathcal{C}$ is a \textbf{precovering class} if for every module $M$, there exists some (not necessarily unique) homomorphism $\pi: C \to M$ with $C \in \mathcal{C}$, such that every morphism from any member $C'$ of $\mathcal{C}$ into $M$ factors through $\pi$:\footnote{In category-theoretic language, this is asserting that $\mathcal{C}$ is a \emph{weakly coreflective class} in $R$-Mod.} 
\[
\begin{tikzcd}
C \arrow[r,"\pi"] & M \\
C' \arrow[ur] \arrow[u, dotted] & 
\end{tikzcd}
\]
Such a map $\pi: C \to M$ is called a $\mathcal{C}$-precover of $M$.

The key fact about deconstructibility is:
\begin{theorem}[Saor\'{\i}n-\v{S}\v{t}ov\'{\i}\v{c}ek~\cite{MR2822215}]\label{thm_DeconPrecover}
All deconstructible classes are precovering classes.  (They proved this is true in many ``exact" categories, not just categories of modules).
\end{theorem}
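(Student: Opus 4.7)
The plan is to prove the stronger statement that every module $M$ admits a \emph{special} $\mathcal{C}$-precover, i.e., a short exact sequence $0 \to Y \to C \to M \to 0$ with $C \in \mathcal{C}$ and $\text{Ext}^1_R(A, Y) = 0$ for every $A \in \mathcal{C}$ (the latter condition I denote $Y \in \mathcal{C}^{\perp_1}$). From such a sequence the precovering property is automatic: for any $g \colon C' \to M$ with $C' \in \mathcal{C}$, the long exact sequence of $\text{Ext}$ yields a surjection $\text{Hom}(C', C) \twoheadrightarrow \text{Hom}(C', M)$ (since the connecting $\text{Ext}^1(C', Y)$ vanishes), so any preimage of $g$ is a lift $h \colon C' \to C$ satisfying $\pi h = g$.

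To construct such a sequence, fix a set $\mathcal{C}_0 \subseteq \mathcal{C}$ of $<\kappa$-presented modules with $\mathcal{C} = \text{Filt}(\mathcal{C}_0)$, for some regular $\kappa > |R|$. I would carry out an Eklof--Trlifaj-style small object argument. Start with a surjection $\pi^0 \colon C^0 \twoheadrightarrow M$ for some ``seed'' $C^0 \in \mathcal{C}$. At each successor step, enlarge $C^\alpha$ to $C^{\alpha+1}$ by, for every $A \in \mathcal{C}_0$ and every element of $\text{Ext}^1(A, Y^\alpha)$ (where $Y^\alpha := \ker \pi^\alpha$), forming the pushout along the inclusion $Y^\alpha \hookrightarrow C^\alpha$ to ``absorb'' that extension. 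The quotient $C^{\alpha+1}/C^\alpha$ is then a direct sum of copies of modules from $\mathcal{C}_0$, and $\pi^\alpha$ extends canonically to $\pi^{\alpha+1}$. Take unions at limit stages. After $\lambda$ many steps for some $\lambda$ sufficiently large in terms of $|\mathcal{C}_0|$ and $\kappa$, the kernel $Y := Y^\lambda$ satisfies $\text{Ext}^1(A, Y) = 0$ for every $A \in \mathcal{C}_0$; Eklof's Lemma upgrades this to $Y \in \mathcal{C}^{\perp_1}$, because $\mathcal{C} = \text{Filt}(\mathcal{C}_0)$.

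The main obstacle is arranging both that the seed $C^0$ lies in $\mathcal{C}$ and that $\pi^0$ is surjective. Once these are secured, membership $C := C^\lambda \in \mathcal{C}$ follows by filtration-closedness: concatenating a $\mathcal{C}_0$-filtration of $C^0$ with the enlargement steps exhibits $C$ as $\mathcal{C}_0$-filtered, whence $C \in \text{Filt}(\mathcal{C}_0) = \mathcal{C}$. When $\mathcal{C}$ contains a generator of the module category (e.g., when $R \in \mathcal{C}$), the seed can be chosen as a sufficiently large free module lying in $\mathcal{C}$. In general, the natural candidate $C^0 := \bigoplus_{A \in \mathcal{C}_0, \, f \in \text{Hom}(A, M)} A_f$ lies in $\mathcal{C}$ (as a direct sum, hence a transfinite extension, of modules from $\mathcal{C}_0$) but need not surject onto $M$; closing this gap is the technical crux. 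It is exactly the kind of step for which Theorem \ref{thm_DeconChar} is well-suited: one passes to a suitable $\mathfrak{N} \prec (H_\theta, \in)$ containing $M$ and $\mathcal{C}_0$, and controls the trace $\mathfrak{N} \cap M$ and its complement via clause (A) to refine the construction so that surjectivity is recovered while remaining inside $\mathcal{C}$.
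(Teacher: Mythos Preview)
Your instinct to use the small object argument is correct---the paper itself does not prove this theorem but cites Saor\'{\i}n--\v{S}\v{t}ov\'{\i}\v{c}ek and notes that the proof ``goes through Quillen's Small Object Argument.'' However, your specific implementation has a genuine gap: you aim for an \emph{epimorphic} special $\mathcal{C}$-precover $0 \to Y \to C \to M \to 0$, but this stronger conclusion is false in general. A deconstructible class need not contain a generator, so there may be no epimorphism from any object of $\mathcal{C}$ onto a given $M$. The simplest witness is $\mathcal{C} = \{0\} = \text{Filt}(\{0\})$; less trivially, take $\mathcal{C} = \text{Filt}(\{\mathbb{Z}/2\mathbb{Z}\})$ in \textbf{Ab}: every member of $\mathcal{C}$ is torsion, so $\mathbb{Z}$ admits no epimorphism from $\mathcal{C}$, yet the zero map $0 \to \mathbb{Z}$ is a valid $\mathcal{C}$-precover. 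Your proposed rescue via Theorem~\ref{thm_DeconChar} cannot close this gap, since that theorem only describes how traces $\mathfrak{N} \cap C$ behave for $C$ already in $\mathcal{C}$; it offers no mechanism for covering an arbitrary $M$ by objects of $\mathcal{C}$.

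The correct setup drops surjectivity. One applies the small object argument with the set $I$ of monomorphisms between $<\kappa$-presented modules whose cokernel lies in $\mathcal{C}_0$, factoring the map $0 \to M$ as $0 \to C \xrightarrow{p} M$ with $C$ an $I$-cell complex (hence $C \in \text{Filt}(\mathcal{C}_0) = \mathcal{C}$) and $p$ having the right lifting property against $I$. To see that $p$ is a $\mathcal{C}$-precover, take any $g \colon C' \to M$ with $C' \in \mathcal{C}$, fix a $\mathcal{C}_0$-filtration of $C'$, and lift $g$ through $p$ stage by stage: at each successor step the inclusion $C'_\alpha \hookrightarrow C'_{\alpha+1}$ is a pushout of a map in $I$ (choose a $<\kappa$-generated submodule of $C'_{\alpha+1}$ surjecting onto $C'_{\alpha+1}/C'_\alpha$), so the lifting property of $p$ applies.
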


The proof of Theorem \ref{thm_DeconPrecover} goes through Quillen's \emph{Small Object Argument} from homotopy theory.    Theorem \ref{thm_DeconChar} strengthened the main result of Eklof-Trlifaj~\cite{MR1798574}, who had isolated an important special case (for left classes of \emph{cotorsion pairs}) that wasn't as obviously connected to the Small Object Argument, but was crucial to the solution of the Flat Cover Conjecture.

There exist filtration-closed classes that are not even precovering.  A series of papers (Estrada et al.~\cite{MR2964610}, Sl\'{a}vik-Trlifaj~\cite{MR3161764}, Bazzoni-\v{S}\v{t}ov\'{\i}\v{c}ek~\cite{MR2869137}, \v{S}aroch~\cite{MR3819707}) culminated in showing that the class of $\aleph_1$-projective modules---which is the same as the Flat Mittag-Leffler modules (by Herbera-Trlifaj~\cite{MR2900444})---is a precovering class if and only if $R$ is a right perfect ring.  In particular, since $\mathbb{Z}$ is not right perfect, the class of $\aleph_1$-free abelian groups is closed under filtrations, but is not a precovering class.

To head off any possible confusion, we mention a much stronger (and much rarer) property than deconstructibility.  A class $\mathcal{C}$ is called \textbf{decomposable} if there is a $\kappa$ such that every member of $\mathcal{C}$ is a direct sum of $<\kappa$-presented members of $\mathcal{C}$.  Decomposable classes are (weakly) deconstructible, since if 
\[
C = \bigoplus_{i < \lambda} C_i
\]
with each $C_i$ a $<\kappa$-presented member of $\mathcal{C}$, we can view $C$ as the union of the smooth chain 
\[
\left\langle  \underbrace{\bigoplus_{\xi < i} C_\xi}_{=:D_i}  \ : \  i < \lambda \right\rangle,
\]
where $D_{i+1}/D_i \simeq C_i$.  So $\mathcal{C}$ is (weakly) $\kappa$-deconstructible.

Kaplansky's Theorem (\cite{MR0100017}) states that over every ring, the class of projective modules is decomposable (with $\kappa = \aleph_1$).  But deconstructibility is strictly weaker than decomposability; e.g., the class of flat modules is always deconstructible (see Section \ref{sec_FCC}), but for many rings the class of flat modules is \emph{not} decomposable.

\subsection{Top-down characterization of deconstructibility}

In \cite{Cox_MaxDecon} we provided a new characterization of  deconstructibility of a class of modules, restated in a simpler (but slightly weaker form) as Theorem \ref{thm_DeconChar} below.  The reader is advised to just focus on clauses \ref{item_KappaDecon} and \ref{item_MainChar} for now; clauses \ref{item_DiagVersion2models} and \ref{item_non_diag} are technical variants that are only occasionally used.

\begin{theorem}[Cox~\cite{Cox_MaxDecon}, Theorem 1.1]\label{thm_DeconChar}
Suppose $R$ is a ring and $\kappa$ is a regular uncountable cardinal with $\kappa > |R|$.\footnote{Theorem 1.1 of \cite{Cox_MaxDecon} used the weaker assumption that $R$ was a $\kappa$-Noetherian ring.  We chose here to make the assumption that $|R|<\kappa$, in order to simplify the statement and proof.  If one is only interested in whether a class is deconstructible or not---without caring how large the filtering set is---then the theorem as stated here carries the full power of Theorem 1.1 of \cite{Cox_MaxDecon}.}  For any class $\mathcal{C}$ of $R$-modules that is closed under filtrations, the following are equivalent:
\begin{enumerate}[label=(\Roman*)]
 \item\label{item_KappaDecon} $\mathcal{C}$ is $\kappa$-deconstructible.
 \item\label{item_MainChar} (``Diagonal" version, single model) For all sufficiently large regular cardinals $\theta$, whenever 
 \[
 \mathfrak{N} \prec (H_\theta,\in, R, \kappa, \mathcal{C} \cap H_\theta)
 \]
 and $\mathfrak{N} \cap \kappa$ is transitive, then the following implication holds for all $R$-modules $C$:
  \[
  C \in \mathfrak{N} \cap \mathcal{C} \ \implies \ \left( \mathfrak{N} \cap C \in \mathcal{C} \text{ and } \frac{C}{\mathfrak{N} \cap C}  \in \mathcal{C} \right).
  \]
  
 \item\label{item_DiagVersion2models} (``Diagonal" version, two models) For all sufficiently large regular cardinals $\theta$, whenever 
 \[
 \mathfrak{N} \prec \mathfrak{N}' \prec (H_\theta,\in, R, \kappa, \mathcal{C} \cap H_\theta)
 \]
 and both $\mathfrak{N} \cap \kappa$ and $\mathfrak{N}' \cap \kappa$ are transitive, then the following implication holds for all $R$-modules $C$:
  \[
  C \in \mathfrak{N} \cap \mathcal{C} \ \implies \ \left( \mathfrak{N} \cap C \in \mathcal{C} \text{ and } \frac{\mathfrak{N}' \cap C}{\mathfrak{N} \cap C}  \in \mathcal{C}  \right).
  \]

  \item\label{item_non_diag} (``Non-diagonal" version, cf.\ Lemma 2.3 of \cite{Cox_MaxDecon}) For every $C \in \mathcal{C}$ there exists a pair $(H,\mathfrak{A})$ such that $(H,\in)$ is a transitive ZFC$^-$ model, $\{ \kappa, R,  C \} \subset H$, and $\mathfrak{A}$ is an expansion of $(H,\in, C)$ in a countable first order signature such that whenever $\mathfrak{N} \prec \mathfrak{A}$ and $\mathfrak{N} \cap \kappa$ is transitive, then $\mathfrak{N} \cap C \in \mathcal{C}$ and $\frac{C}{\mathfrak{N} \cap C} \in \mathcal{C}$.  
\end{enumerate}

\end{theorem}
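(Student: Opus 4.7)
The plan is to prove (III) $\Rightarrow$ (II) $\Rightarrow$ (I) $\Rightarrow$ (III) together with (I) $\Leftrightarrow$ (IV). The implication (III) $\Rightarrow$ (II) is immediate on taking $\mathfrak{N}' := H_\theta$ (so $\mathfrak{N}' \cap C = C$), and (I) $\Leftrightarrow$ (IV) is a routine reformulation that runs the arguments below inside the specified pair $(H, \mathfrak{A})$ in place of $H_\theta$.

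For (I) $\Rightarrow$ (III), fix $C \in \mathfrak{N} \cap \mathcal{C}$ and, by elementarity, pick a $\mathcal{C}_\kappa$-filtration $\vec{C} = \langle C_i : i \leq \zeta \rangle$ of $C$ belonging to $\mathfrak{N}$. The central tool is Lemma~\ref{lem_IsoQuotient}: at a successor step $i + 1 \in \mathfrak{N}$,
\[
\frac{\mathfrak{N} \cap C_{i+1}}{\mathfrak{N} \cap C_i} \ \simeq \ \mathfrak{N} \cap \frac{C_{i+1}}{C_i} \ = \ \frac{C_{i+1}}{C_i} \in \mathcal{C}_\kappa,
\]
where the second equality uses that $C_{i+1}/C_i$ has size $<\kappa$ (since $|R|<\kappa$) and lies in $\mathfrak{N}$, so by transitivity of $\mathfrak{N} \cap \kappa$ (Fact~\ref{fact_BasicFactsElemSub}) it is entirely contained in $\mathfrak{N}$. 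I would then perform a transfinite induction on $\zeta$, simultaneously showing $\mathfrak{N} \cap C \in \mathcal{C}$ and $C/(\mathfrak{N} \cap C) \in \mathcal{C}$: at a successor $\zeta = \xi + 1$, the surjectivity encoded in the displayed isomorphism forces the splitting $C = (\mathfrak{N} \cap C) + C_\xi$, so $C/(\mathfrak{N} \cap C) \simeq C_\xi/(\mathfrak{N} \cap C_\xi)$, reducing both claims to the inductive hypothesis combined with filtration-closedness. At a limit $\zeta$, elementarity yields $\mathfrak{N} \cap C = \bigcup_{i \in \mathfrak{N} \cap \zeta} \mathfrak{N} \cap C_i$, and after reindexing by order type the trace becomes a $\mathcal{C}$-filtration whose consecutive-in-$\mathfrak{N}$ quotients are $\mathfrak{N} \cap (C_{i'}/C_i) \in \mathcal{C}$ by the inductive hypothesis applied to the strictly shorter sub-filtration. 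The full two-model conclusion of (III) follows by expanding the ambient structure with a predicate for $\mathfrak{N}$ so that $\mathfrak{N} \in \mathfrak{N}'$, then applying Lemma~\ref{lem_IsoQuotient} to identify $(\mathfrak{N}' \cap C)/(\mathfrak{N} \cap C) \simeq \mathfrak{N}' \cap (C/(\mathfrak{N} \cap C))$ and feeding into the same induction.

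For (II) $\Rightarrow$ (I), I would construct a continuous $\subseteq$-increasing elementary chain $\langle \mathfrak{N}_\alpha : \alpha < \mu \rangle \prec (H_\theta, \in, R, \kappa, \mathcal{C} \cap H_\theta)$ with $C \in \mathfrak{N}_0$, each $\mathfrak{N}_\alpha \cap \kappa$ transitive, $\mathfrak{N}_\alpha \in \mathfrak{N}_{\alpha+1}$, and $C \subseteq \bigcup_\alpha \mathfrak{N}_\alpha$. Setting $C_\alpha := \mathfrak{N}_\alpha \cap C$, clause (II) gives each $C_\alpha \in \mathcal{C}$ and $C/C_\alpha \in \mathcal{C}$; moreover Lemma~\ref{lem_IsoQuotient} together with $\mathfrak{N}_\alpha \in \mathfrak{N}_{\alpha+1}$ yields $C_{\alpha+1}/C_\alpha \simeq \mathfrak{N}_{\alpha+1} \cap (C/C_\alpha)$, which lies in $\mathcal{C}$ by (II) applied to $\mathfrak{N}_{\alpha+1}$ and the module $C/C_\alpha$. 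Hence $\vec{C}$ is a $\mathcal{C}$-filtration of $C$ exhausting $C$ by the cofinality hypothesis on the chain.

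The delicate point is arranging the chain so that each $C_{\alpha+1}/C_\alpha$ is genuinely $<\kappa$-presented (not merely in $\mathcal{C}$): when $|C| \geq \kappa$ the chain must have length well beyond $\kappa$, so one cannot maintain $|\mathfrak{N}_\alpha| < \kappa$ throughout, and transitivity of $\mathfrak{N}_\alpha \cap \kappa$ eventually relies on $\kappa \subseteq \mathfrak{N}_\alpha$. I would address this by arranging each successor step to be a carefully restricted Skolem hull of $\mathfrak{N}_\alpha$ together with a single new generator $c_\alpha \in C$ and the parameter $\mathfrak{N}_\alpha$, with book-keeping that keeps the new $C$-elements below the $<\kappa$-threshold; alternatively (and perhaps more cleanly, as I expect the appendix does), one applies a Hill-lemma-style refinement inside a single sufficiently large elementary submodel containing $C$, converting the coarse $\mathcal{C}$-filtration above into a genuine $\mathcal{C}_\kappa$-filtration by means of (II).
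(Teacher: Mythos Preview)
Your outline is close in spirit to the paper's, but two steps have genuine gaps. First, in (I) $\Rightarrow$ (II), your transfinite induction on $\zeta$ does not handle the quotient $C/(\mathfrak{N}\cap C)$ at limit stages: you only describe how $\mathfrak{N}\cap C$ is filtered there, and the inductive hypothesis (giving $C_\xi/(\mathfrak{N}\cap C_\xi)\in\mathcal{C}$ for $\xi\in\mathfrak{N}\cap\zeta$) does not obviously assemble into a filtration of $C/(\mathfrak{N}\cap C)$, since those quotients are not nested in one another. The paper avoids induction on $\zeta$ entirely: it filters $C/(\mathfrak{N}\cap C)$ directly by $\big\langle \big((\mathfrak{N}\cap C)+C_i\big)/(\mathfrak{N}\cap C) : i<\zeta\big\rangle$ and computes each successive quotient $\big((\mathfrak{N}\cap C)+C_{i+1}\big)/\big((\mathfrak{N}\cap C)+C_i\big)$ via a two-case split --- either $\{i,i+1\}\subset\mathfrak{N}$, in which case the quotient is zero (using $|C_{i+1}/C_i|<\kappa$), or $\{i,i+1\}\cap\mathfrak{N}=\emptyset$, in which case it is isomorphic to $C_{i+1}/C_i$ (using $\mathfrak{N}\cap C_i=\mathfrak{N}\cap C_{i+1}$). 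Second, your reduction to (III) by ``expanding the ambient structure with a predicate for $\mathfrak{N}$'' is not available: clause (III) fixes the structure as $(H_\theta,\in,R,\kappa,\mathcal{C}\cap H_\theta)$, so you cannot assume $\mathfrak{N}\in\mathfrak{N}'$. The paper instead reruns the same argument with $\mathfrak{N}'\cap C$ in place of $C$, which works because the filtration $\vec{C}$ lies in $\mathfrak{N}\subset\mathfrak{N}'$.

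For (II) $\Rightarrow$ (I), you correctly identify the difficulty, but your proposed fixes are vague. The paper's resolution is clean: prove by induction on $|C|$ that every $C\in\mathcal{C}$ is $\mathcal{C}_\kappa$-filtered. For $|C|=\lambda\ge\kappa$, build a continuous $\in$-chain $\langle\mathfrak{N}_i:i<\mathrm{cf}(\lambda)\rangle$ of elementary submodels each of size $<\lambda$ (not $<\kappa$), covering $C$; your Lemma~\ref{lem_IsoQuotient} computation shows $\langle\mathfrak{N}_i\cap C\rangle$ is a $\mathcal{C}$-filtration with quotients of size $<\lambda$, and the inductive hypothesis then refines each such quotient to a $\mathcal{C}_\kappa$-filtration. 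No careful Skolem-hull bookkeeping or Hill-lemma machinery is needed.
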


The \ref{item_MainChar} $\implies$ \ref{item_KappaDecon} direction of Theorem \ref{thm_DeconChar} provides a ``top-down" way to verify that a class is deconstructible, allowing one to avoid having to recursively build filtrations every time.  The \ref{item_KappaDecon} $\implies$ \ref{item_MainChar} direction of Theorem \ref{thm_DeconChar} is a variant of the Hill Lemma (\cite{MR2985554}, Chapter 7).

\begin{remark}\label{rem_DontNeedFiltClosure}
Theorem \ref{thm_DeconChar} assumes closure of $\mathcal{C}$ under filtrations.  But even without that assumption, one still has the implication
\[
\ref{item_MainChar}  \implies \mathcal{C} \text{ is } \textbf{weakly} \ \kappa \text{-deconstructible}.
\]
\end{remark}

We want to focus mainly on applications of Theorem \ref{thm_DeconChar}, so we relegate its proof to Appendix \ref{app_ProofDeconChar}, and only deal with equivalence of \ref{item_KappaDecon} with \ref{item_MainChar} (see \cite{Cox_MaxDecon} for the proof of further equivalence with clauses \ref{item_DiagVersion2models} and \ref{item_non_diag}, and for how to deal with situations where $\kappa \le |R|$).

\begin{remark}\label{rem_FiltGames}
Though we will not discuss this variant in the current article, we note that Cox~\cite{Cox_DBgames}, building on previous work of Mekler-Shelah-V\"{a}\"{a}n\"{a}nen~\cite{MR1191613}, investigates what happens when one weakens clause \ref{item_DiagVersion2models} to only require the relevant implication to hold when both $\mathfrak{N}$ and $\mathfrak{N}'$ have cardinality $<\kappa$. Roughly speaking, that variant of \ref{item_DiagVersion2models} is equivalent to each of the following statements, which can be viewed as ``generic" versions of asserting that $\mathcal{C}$ is $\kappa$-deconstructible:
\begin{itemize} 
  \item Every module in $\mathcal{C}$ is $\mathcal{C}_{\kappa}$-filtered in some $\kappa$-closed forcing extension of the universe
  \item Player II has a winning strategy in various games of length $\kappa$, including a certain Ehrenfeucht-Fra\"{\i}ss\'{e} game of length $\kappa$ and \emph{Filtration Games} introduced in \cite{Cox_DBgames}.
\end{itemize}
\end{remark}

In Theorem \ref{thm_DeconChar}, it is possible to weaken the assumption that $|R|<\kappa$ to merely assume $R$ is $\kappa$-Noetherian (all ideals in $R$ are $<\kappa$-generated), if one also replaces instances of $\mathfrak{N} \cap C$ in the theorem with $\langle \mathfrak{N} \cap C \rangle$.  This is how it was done in Theorem 1.1 of  \cite{Cox_MaxDecon}, though we avoid doing so here because it considerably complicates the arguments.  Though we will not use it here, the $\kappa$-Noetherian property of a ring can even be characterized in terms of elementary submodels:

\begin{theorem}[Cox~\cite{Cox_MaxDecon}, Theorem 3.8]\label{thm_CharNoeth}
For regular uncountable $\kappa$, the following are equivalent for any ring $R$:
\begin{enumerate}
 \item\label{item_R_Kappa_Noeth} $R$ is $\kappa$-Noetherian.
 
 \item\label{item_N_cap_generated} Whenever $A$ is an $R$-submodule of $M$, $\{ A,M,R \} \subset \mathfrak{N} \prec (H_\theta,\in)$, and $\mathfrak{N} \cap \kappa$ is transitive, then
 \[
\langle \mathfrak{N} \cap M \rangle \cap A = \langle \mathfrak{N} \cap A \rangle.
 \]
 (The $\supseteq$ direction always holds; the $\kappa$-Noetherian property is needed to get the $\subseteq$ direction).
\end{enumerate}
\end{theorem}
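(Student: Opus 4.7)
I would prove the two implications separately, noting that the $\supseteq$ direction of the displayed equation in \ref{item_N_cap_generated} is automatic: every $a \in \mathfrak{N} \cap A$ belongs to both $\mathfrak{N} \cap M$ and $A$, so $\langle \mathfrak{N} \cap A \rangle \subseteq \langle \mathfrak{N} \cap M \rangle \cap A$. Thus the real content is the $\subseteq$ direction (for \ref{item_R_Kappa_Noeth} $\Rightarrow$ \ref{item_N_cap_generated}) and its failure for some $\mathfrak{N}$ (for the contrapositive).

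For the easy direction \ref{item_N_cap_generated} $\Rightarrow$ \ref{item_R_Kappa_Noeth}, I argue by contrapositive. Suppose $R$ has a left ideal $I$ requiring at least $\kappa$ generators, and set $M := R$, $A := I$. Using Fact \ref{fact_BasicFactsElemSub}, pick any $\mathfrak{N} \prec (H_\theta,\in)$ of size $<\kappa$ with $R, I \in \mathfrak{N}$ and $\mathfrak{N} \cap \kappa$ transitive. Since $1 \in R$ is definable, $1 \in \mathfrak{N} \cap R$, so $\langle \mathfrak{N} \cap R \rangle \supseteq \langle \{1\} \rangle_R = R$; hence $\langle \mathfrak{N} \cap M \rangle \cap A = I$. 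But $|\mathfrak{N} \cap I| < \kappa$, so $\langle \mathfrak{N} \cap I \rangle$ is a $<\kappa$-generated proper submodule of $I$, witnessing the failure of \ref{item_N_cap_generated}.

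For \ref{item_R_Kappa_Noeth} $\Rightarrow$ \ref{item_N_cap_generated}, fix $x \in \langle \mathfrak{N} \cap M \rangle \cap A$ and write $x = \sum_{i=1}^n r_i m_i$ with $m_i \in \mathfrak{N} \cap M$ and $r_i \in R$. By Fact \ref{fact_BasicFactsElemSub}\ref{item_FiniteSubset}, the tuple $\vec{m}=(m_1,\dots,m_n)$ is itself an element of $\mathfrak{N}$. I then consider the left $R$-submodule of $R^n$
\[
J := \left\{ \vec{s} \in R^n \ : \ \textstyle\sum_i s_i m_i \in A \right\},
\]
which is definable from the parameters $\vec{m}, A, R \in \mathfrak{N}$ and so lies in $\mathfrak{N}$. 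Next, I would invoke the auxiliary fact that $\kappa$-Noetherianness of $R$ implies every submodule of a finitely generated free $R$-module is $<\kappa$-generated; so there exist some $\lambda < \kappa$ and an $R$-generating sequence $(j_\alpha)_{\alpha < \lambda}$ for $J$. By elementarity, such $\lambda$ and sequence can be chosen inside $\mathfrak{N}$, and transitivity of $\mathfrak{N} \cap \kappa$ then forces $\lambda \subseteq \mathfrak{N}$, so each $j_\alpha \in \mathfrak{N}$. Expressing $\vec{r}=(r_1,\dots,r_n) \in J$ as a finite $R$-linear combination of the $j_\alpha$'s and applying the map $\vec{s} \mapsto \sum_i s_i m_i$, I get $x$ as an $R$-linear combination of the elements $a_\alpha := \sum_i (j_\alpha)_i m_i$. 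Since $j_\alpha, \vec{m}, A \in \mathfrak{N}$ the $a_\alpha$'s lie in $\mathfrak{N}$, and since $j_\alpha \in J$ they lie in $A$, so $x \in \langle \mathfrak{N} \cap A \rangle$.

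The main technical ingredient I anticipate is the auxiliary module-theoretic lemma used in the second direction: that $\kappa$-Noetherianness of left ideals propagates to submodules of $R^n$. I would handle it by induction on $n$ using the short exact sequence $0 \to Rm_1 \to \sum_i Rm_i \to (\sum_i Rm_i)/Rm_1 \to 0$ together with closure of the class of $<\kappa$-generated modules under extensions, which holds because $\kappa$ is regular uncountable. Everything else---the definability of $J$, the pulling of generators into $\mathfrak{N}$ via transitivity, and the trivial contrapositive---is a standard application of the elementary submodel toolkit already laid out in Section \ref{sec_ElemSub}.
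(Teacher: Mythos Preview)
The paper does not actually prove Theorem~\ref{thm_CharNoeth}; it merely cites it from \cite{Cox_MaxDecon} and moves on, so there is no in-paper argument to compare against. That said, your proposal is correct and is essentially the natural proof. One small point worth tightening: in the \ref{item_R_Kappa_Noeth} $\Rightarrow$ \ref{item_N_cap_generated} direction you invoke elementarity to pull a generating sequence of length $\lambda<\kappa$ into $\mathfrak{N}$, but $\kappa$ is not assumed to be an element of $\mathfrak{N}$, so the bound ``$\lambda<\kappa$'' is not directly available as a parameter. The standard fix is to let $\mu$ be the \emph{least} cardinal such that $J$ is $\mu$-generated; then $\mu$ is definable from $J$ and $R$, hence $\mu\in\mathfrak{N}$, and since $\mu<\kappa$ and $\mathfrak{N}\cap\kappa$ is transitive you get $\mu\subseteq\mathfrak{N}$ as needed. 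With that adjustment the argument goes through cleanly.
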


\section{Applications:  streamlining older results}\label{sec_ApplicationsBasic}

We use the characterization of deconstructibility (Theorem \ref{thm_DeconChar}) to give short proofs of some known results.

\subsection{Proof of the Flat Cover Conjecture}\label{sec_FCC}

The notion of deconstructibility arose out of the following resolution of the Flat Cover Conjecture around the year 2000.  We give a new proof of:

\begin{theorem}[Eklof-Trlifaj~\cite{MR1798574}, Bican-El Bashir-Enochs~\cite{MR1832549}]\label{thm_FlatCover}
Over any ring $R$, the class of flat $R$-modules is a covering class.
\end{theorem}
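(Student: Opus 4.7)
The plan is to combine the top-down characterization of deconstructibility (Theorem \ref{thm_DeconChar}) with the purity reflection lemma (Lemma \ref{lem_ElemPureReflect}) to show that the class of flat $R$-modules is $\kappa$-deconstructible for $\kappa := |R|^+$; then apply the Saor\'{\i}n--\v{S}\v{t}ov\'{\i}\v{c}ek theorem (Theorem \ref{thm_DeconPrecover}) to conclude precoveringness, and finally invoke Enochs' classical result that any precovering class closed under direct limits is a covering class. Let $\mathcal{F}$ denote the class of flat $R$-modules throughout.

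The first step is to check the filtration-closure hypothesis of Theorem \ref{thm_DeconChar}: that the union of any filtration by flat modules is flat. This is a well-known fact, provable by a direct transfinite induction using vanishing of $\mathrm{Tor}_1^R(-,M)$ for every right $R$-module $M$, or alternatively by characterizing flats as the left root of $\mathrm{Ext}^1(-,-)$ against the class of character modules and then applying Eklof's Lemma (Section \ref{sec_EklofTrlifaj}).

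The heart of the argument is verifying clause \ref{item_MainChar} of Theorem \ref{thm_DeconChar} at $\kappa := |R|^+$. Fix a suitably large regular cardinal $\theta$ and suppose
\[
\mathfrak{N} \prec (H_\theta, \in, R, \kappa, \mathcal{F} \cap H_\theta)
\]
with $\mathfrak{N}\cap\kappa$ transitive, and let $F \in \mathfrak{N}\cap\mathcal{F}$. Since $|R|<\kappa$ and $R \in \mathfrak{N}$, Fact \ref{fact_BasicFactsElemSub} forces $R \subseteq \mathfrak{N}$, so Lemma \ref{lem_ElemPureReflect} gives that $\mathfrak{N}\cap F$ is a pure submodule of $F$. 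From the pure-exact sequence
\[
0 \to \mathfrak{N}\cap F \to F \to F/(\mathfrak{N}\cap F) \to 0,
\]
the standard homological fact that pure submodules of flats are flat, and that quotients of flats by pure flat submodules are flat, then yields that both $\mathfrak{N}\cap F$ and $F/(\mathfrak{N}\cap F)$ lie in $\mathcal{F}$. This verifies clause \ref{item_MainChar}, so Theorem \ref{thm_DeconChar} gives that $\mathcal{F}$ is $\kappa$-deconstructible, and Theorem \ref{thm_DeconPrecover} upgrades this to the conclusion that $\mathcal{F}$ is precovering.

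To pass from precovering to covering, I would invoke Enochs' theorem: any precovering class closed under direct limits is a covering class. Since direct limits of flat modules are flat (for instance by Lazard's theorem, or directly from the $\mathrm{Tor}$-vanishing definition and exactness of filtered colimits), $\mathcal{F}$ qualifies. I do not anticipate a serious obstacle: the purity of the trace $\mathfrak{N}\cap F$ is given for free by Lemma \ref{lem_ElemPureReflect}, and the heavy lifting that the original Bican--El Bashir--Enochs argument does via transfinite Hill-style constructions is absorbed into the general characterization of deconstructibility. The only ingredient outside the elementary-submodel framework is Enochs' precovering-to-covering theorem, which is unavoidable since Theorem \ref{thm_DeconPrecover} yields only the precovering conclusion.
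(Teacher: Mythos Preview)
Your proposal is correct and follows essentially the same route as the paper: verify filtration-closure of the flats, use Lemma~\ref{lem_ElemPureReflect} to get purity of $\mathfrak{N}\cap F$ in $F$, invoke the standard fact (the paper's Fact~\ref{fact_FlatPure}) that pure submodules and the corresponding quotients of flats are flat, then apply Theorem~\ref{thm_DeconChar} and Theorem~\ref{thm_DeconPrecover}, with Enochs' reduction handling the final step from precovering to covering. The only cosmetic difference is that the paper states Enochs' reduction up front rather than at the end.
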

Long before the proof of Theorem \ref{thm_FlatCover}, Enochs had reduced the problem to proving that the class $\mathcal{F}$ of flat modules is a \emph{pre}covering class.  And since $\mathcal{F}$ is closed under extensions and direct limits, it is in particular filtration-closed.  So by Theorem \ref{thm_DeconPrecover} it suffices to show that $\mathcal{F}$ is weakly deconstructible.  We will use:
\begin{fact}[Lam~\cite{MR1653294} Corollary 4.86]\label{fact_FlatPure}
If $F$ is flat and $K$ is a pure submodule of $F$, then both $K$ and $F/K$ are flat.
\end{fact}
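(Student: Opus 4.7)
The plan is to deduce both flatness statements from the long exact sequence in $\operatorname{Tor}$ associated to the short exact sequence
\[
0 \to K \to F \to F/K \to 0,
\]
combined with the characterization of flatness as vanishing of $\operatorname{Tor}_1^R$ and the characterization of purity as preservation of exactness under tensoring with arbitrary right $R$-modules. Concretely, I would first recall that a monomorphism $K \hookrightarrow F$ is pure iff for every right $R$-module $N$ the induced map $N \otimes_R K \to N \otimes_R F$ is injective; and that a left $R$-module $M$ is flat iff $\operatorname{Tor}_1^R(N,M) = 0$ for every right $R$-module $N$ (equivalently, $\operatorname{Tor}_n^R(N,M) = 0$ for all $n \ge 1$).

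With this in hand, fix an arbitrary right $R$-module $N$ and write down the Tor long exact sequence
\[
\cdots \to \operatorname{Tor}_2^R(N, F/K) \to \operatorname{Tor}_1^R(N, K) \to \operatorname{Tor}_1^R(N, F) \to \operatorname{Tor}_1^R(N, F/K) \to N \otimes K \to N \otimes F \to N \otimes F/K \to 0.
\]
I would handle $F/K$ first. Since $F$ is flat, $\operatorname{Tor}_1^R(N,F) = 0$, so $\operatorname{Tor}_1^R(N, F/K)$ injects into the kernel of $N \otimes K \to N \otimes F$. Purity of $K \hookrightarrow F$ says that kernel is zero, so $\operatorname{Tor}_1^R(N, F/K) = 0$; since $N$ was arbitrary, $F/K$ is flat.

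Now that $F/K$ is known to be flat, $\operatorname{Tor}_2^R(N, F/K) = 0$, and $\operatorname{Tor}_1^R(N, F) = 0$ as before, so the exact segment
\[
\operatorname{Tor}_2^R(N, F/K) \to \operatorname{Tor}_1^R(N, K) \to \operatorname{Tor}_1^R(N, F)
\]
has both outer terms zero, forcing $\operatorname{Tor}_1^R(N, K) = 0$. Hence $K$ is flat as well.

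There is no real obstacle here: the proof is essentially a diagram chase in the Tor sequence. The only items one needs to be careful about are the equivalence of the two formulations of purity used above (tensor exactness versus the matrix/finitely-presented formulation stated earlier in the paper) and the standard fact that flatness implies vanishing of all higher $\operatorname{Tor}$'s, both of which are classical and can simply be cited from Lam or any standard homological algebra reference.
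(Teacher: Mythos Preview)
Your argument is correct: the Tor long exact sequence together with the tensor-exactness formulation of purity and the vanishing-of-$\operatorname{Tor}_1$ characterization of flatness yields both conclusions exactly as you describe. The only mild caveat, which you already flag, is that the paper states purity in the matrix/equation-solving form, so the equivalence with the tensor formulation needs to be invoked; this is standard (and is the content of the surrounding discussion in Lam).

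As for comparison with the paper: there is nothing to compare. The paper does not prove this fact at all; it simply cites it as Corollary~4.86 of Lam and uses it as a black box in the proof of the Flat Cover Conjecture. So your proposal supplies a proof where the paper provides none.
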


Let $F$ be a flat module, and suppose $F \in \mathfrak{N} \prec (H_\theta,\in, R)$ with $\mathfrak{N} \cap |R|^+$ transitive; then $R$ is both an element and subset of $\mathfrak{N}$, by Fact \ref{fact_BasicFactsElemSub}.  By Lemma \ref{lem_ElemPureReflect}, $\mathfrak{N} \cap F$ is a pure submodule of $F$.  Then by Fact \ref{fact_FlatPure}, both $\mathfrak{N} \cap F$ and $\frac{F}{\mathfrak{N} \cap F}$ are flat.  By the \ref{item_MainChar} $\implies$ \ref{item_KappaDecon} direction of Theorem \ref{thm_DeconChar}, $\mathcal{F}$ is $|R|^+$-deconstructible, completing the proof of Theorem \ref{thm_FlatCover}.

The argument actually proves the following generalization:
\begin{theorem}[Bravo-Gillespie-Hovey~\cite{bravo2014stable}]\label{thm_Bravo_et_al}
Suppose $\mathcal{C}$ is a class of modules closed under filtrations, and such that for any $C \in \mathcal{C}$ and any submodule $K \subset C$:
\[
K \text{ is pure in } C \ \implies \ \left( K \in \mathcal{C} \text{ and } C/K \in \mathcal{C} \right).
\]  
Then $\mathcal{C}$ is deconstructible.
\end{theorem}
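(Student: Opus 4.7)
The plan is to mirror almost verbatim the proof of Theorem \ref{thm_FlatCover} given in Section \ref{sec_FCC}, using the top-down characterization of deconstructibility (Theorem \ref{thm_DeconChar}) together with the purity-reflection result Lemma \ref{lem_ElemPureReflect}. The only module-theoretic content needed in the proof of the Flat Cover Conjecture, beyond purity reflection, was Fact \ref{fact_FlatPure}, which said that pure submodules of flat modules, and their quotients, are again flat. In the present theorem that fact is simply \emph{assumed} as the hypothesis on $\mathcal{C}$, so the earlier argument should abstract at once.

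Concretely, first I would set $\kappa := |R|^+$ and aim to verify clause \ref{item_MainChar} of Theorem \ref{thm_DeconChar} for this $\kappa$. Fix a sufficiently large regular $\theta$ and take any $\mathfrak{N} \prec (H_\theta,\in,R,\kappa,\mathcal{C} \cap H_\theta)$ such that $\mathfrak{N} \cap \kappa$ is transitive. Since $|R|<\kappa$, Fact \ref{fact_BasicFactsElemSub} guarantees $R \subset \mathfrak{N}$, so $\mathfrak{N}$ meets the running hypotheses of Lemma \ref{lem_ElemPureReflect}. Now let $C \in \mathfrak{N} \cap \mathcal{C}$ be arbitrary. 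By Lemma \ref{lem_ElemPureReflect}, $\mathfrak{N} \cap C$ is a pure submodule of $C$. Since $C \in \mathcal{C}$, the hypothesis of the theorem applied to the pure inclusion $\mathfrak{N} \cap C \subset C$ yields
\[
\mathfrak{N} \cap C \in \mathcal{C} \quad \text{and} \quad \frac{C}{\mathfrak{N} \cap C} \in \mathcal{C}.
\]
This is precisely the conclusion of clause \ref{item_MainChar}. Because $\mathcal{C}$ is assumed closed under filtrations, the implication \ref{item_MainChar}$\Rightarrow$\ref{item_KappaDecon} of Theorem \ref{thm_DeconChar} then gives that $\mathcal{C}$ is $\kappa$-deconstructible, hence deconstructible.

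I do not anticipate a real obstacle: the proof is essentially an exercise in recognizing the abstract pattern behind the Flat Cover argument. The one minor point that needs attention is to ensure the setup of Lemma \ref{lem_ElemPureReflect} applies---namely that $R$ sits both as an element and a subset of $\mathfrak{N}$---which is why $\kappa := |R|^+$ together with transitivity of $\mathfrak{N}\cap\kappa$ is the right choice. Everything else is a direct substitution of the hypothesis of the theorem for Fact \ref{fact_FlatPure} in the earlier argument.
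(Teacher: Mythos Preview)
Your proposal is correct and follows essentially the same approach as the paper: the paper explicitly presents Theorem \ref{thm_Bravo_et_al} as the abstract form of the argument just given for the Flat Cover Conjecture, replacing Fact \ref{fact_FlatPure} by the hypothesis on $\mathcal{C}$, exactly as you do. Your write-up even matches the paper's choice $\kappa=|R|^+$ and its appeal to Lemma \ref{lem_ElemPureReflect} followed by the \ref{item_MainChar}$\Rightarrow$\ref{item_KappaDecon} direction of Theorem \ref{thm_DeconChar}.
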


The assumptions on $\mathcal{C}$ in Theorem \ref{thm_Bravo_et_al} closely resemble, but are strictly stronger than, clause \ref{item_MainChar} of Theorem \ref{thm_DeconChar}.  For example, there exist rings for which the projective modules are \textbf{not} closed under pure submodules, so in particular do not satisfy the assumptions of Theorem \ref{thm_Bravo_et_al}.\footnote{The simplest example of such a ring (described to the author by J.\ \v{S}aroch) is the ring $\prod_{n =1}^\infty \mathbb{Z}/2\mathbb{Z}$.}  But the class of projective modules is always deconstructible, over any ring,\footnote{Due to Kaplansky's Theorem, see Section \ref{sec_Kaplansky} below.} and hence always satisfy clause \ref{item_MainChar} of Theorem \ref{thm_DeconChar}.

\subsection{The Eklof-Trlifaj extensions of the Flat Cover Conjecture}\label{sec_EklofTrlifaj}

For a class $\mathcal{B}$ of $R$-modules, ${}^\perp \mathcal{B}$ refers to the class of $A$ such that $\text{Ext}^1_R(A,B)=0$.  We will use several standard facts about \emph{Ext} from homological algebra (see, for example, Chapter 1 of Eklof-Mekler~\cite{MR1914985}):
\begin{fact}\label{fact_CharactVanishExt}
$\text{Ext}^1_R(A,B)=0$ if and only if the following holds:  given some (equivalently, any) presentation $A \simeq F/K$ with $F$ free, every homomorphism from $K$ into $B$ extends to a homomorphism from $F$ into $B$.
\end{fact}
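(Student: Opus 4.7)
My plan is to derive the characterization directly from the long exact sequence of $\mathrm{Ext}$ associated to a free presentation. Fix a free presentation $0 \to K \xrightarrow{\iota} F \to A \to 0$ with $F$ free, and apply the contravariant functor $\mathrm{Hom}_R(-,B)$. Since $F$ is projective (being free), $\mathrm{Ext}^1_R(F,B) = 0$, so the long exact sequence of $\mathrm{Ext}$ truncates to
\[
0 \to \mathrm{Hom}_R(A,B) \to \mathrm{Hom}_R(F,B) \xrightarrow{\iota^*} \mathrm{Hom}_R(K,B) \xrightarrow{\delta} \mathrm{Ext}^1_R(A,B) \to 0.
\]
By exactness, $\mathrm{Ext}^1_R(A,B)$ is isomorphic to the cokernel of the restriction map $\iota^*$, and hence vanishes precisely when $\iota^*$ is surjective, i.e.\ when every homomorphism $K \to B$ extends along $\iota$ to a homomorphism $F \to B$. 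That disposes of the ``some free presentation'' version of the biconditional.

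Next I would handle the ``(equivalently, any)'' parenthetical. The slickest route is to observe that $\mathrm{Ext}^1_R(A,B)$ depends only on the pair $(A,B)$, not on the chosen resolution: since the previous paragraph shows that for \emph{each} free presentation the extension property is equivalent to the same fixed statement $\mathrm{Ext}^1_R(A,B) = 0$, all free presentations yield the same extension condition. An alternative that avoids quoting presentation-independence is Schanuel's lemma: given two free presentations $0 \to K_i \to F_i \to A \to 0$ for $i=1,2$, Schanuel produces an isomorphism $K_1 \oplus F_2 \simeq K_2 \oplus F_1$, and a routine diagram chase through this isomorphism transports the extension property for one presentation into the extension property for the other.

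I do not anticipate a substantial obstacle, since this is a textbook consequence of the derived-functor calculus; the only care required is to locate the correct piece of the long exact sequence and to note that $F$ is projective. If one wished to keep the paper maximally self-contained and bypass the $\mathrm{Ext}$ long exact sequence altogether, a fully elementary substitute is available via the Yoneda description of $\mathrm{Ext}^1_R(A,B)$ as equivalence classes of short exact sequences $0 \to B \to E \to A \to 0$: the class $\delta(\varphi) \in \mathrm{Ext}^1_R(A,B)$ corresponding to a homomorphism $\varphi: K \to B$ is represented by the pushout of the given presentation along $\varphi$, and this pushout splits (so represents $0$) if and only if $\varphi$ extends to $F$.
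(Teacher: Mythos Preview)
Your argument is correct and entirely standard. Note, however, that the paper does not actually supply a proof of this fact: it is stated as one of ``several standard facts about \emph{Ext} from homological algebra'' with a reference to Chapter~1 of Eklof--Mekler, so there is no paper proof to compare against. Your derivation via the long exact sequence (which is the paper's Fact~\ref{fact_long_exact}) is exactly the textbook route, and your handling of the ``(equivalently, any)'' parenthetical is fine.
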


\begin{fact}\label{fact_long_exact}
Suppose
\[
\begin{tikzcd}
0 \arrow[r] & K \arrow[r, "e"] &  M \arrow[r, "\pi"] &  Q \arrow[r] & 0
\end{tikzcd}
\]
is a short exact sequence of $R$-modules, and $B$ is an $R$-module.  Then there is an exact sequence
\[
\begin{tikzcd}
\text{Hom}_R(K,B) \arrow[drr] & \text{Hom}_R(M,B) \arrow[l, "e^*"'] & \text{Hom}_R(Q,B) \arrow[l, "\pi^*"'] & 0 \arrow[l] \\
\text{Ext}^1_R(K,B) \arrow[drr] & \text{Ext}^1_R(M,B) \arrow[l] & \text{Ext}^1_R(Q,B) \arrow[l]  & \\
\text{Ext}^2_R(K,B) & \text{Ext}^2_R(M,B) \arrow[l] & \text{Ext}^2_R(Q,B) \arrow[l]  &  \\
 \vdots &   \vdots &  \vdots & 
\end{tikzcd}
\]
of abelian groups, where $e^*$ and $\pi^*$ are the results of applying the contravariant functor $\text{Hom}(-,B)$ to $e$ and $\pi$, respectively.
\end{fact}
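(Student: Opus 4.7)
The plan is to derive this long exact sequence via the standard machinery of derived functors applied to a projective resolution of the first (contravariant) argument. First I would choose a projective resolution $P_\bullet \twoheadrightarrow Q$ and apply the Horseshoe Lemma to $0 \to K \to M \to Q \to 0$ to obtain projective resolutions $P'_\bullet \twoheadrightarrow K$ and $P''_\bullet \twoheadrightarrow M$, together with a short exact sequence of complexes $0 \to P'_\bullet \to P''_\bullet \to P_\bullet \to 0$ that is split-exact in each degree (by projectivity of each $P_n$).

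Next I would apply the contravariant functor $\text{Hom}_R(-,B)$ in each degree. Because the short exact sequence of complexes splits in each degree, applying $\text{Hom}_R(-,B)$ yields a short exact sequence of cochain complexes of abelian groups. Passing to cohomology via the snake/zigzag construction produces a long exact cohomology sequence, and identifying the $n$-th cohomology of $\text{Hom}_R(P^X_\bullet,B)$ with $\text{Ext}^n_R(X,B)$ for $X \in \{K,M,Q\}$ yields precisely the stated long exact sequence, with the connecting homomorphisms coming from the zigzag construction.

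The leftmost segment---ending at $\text{Hom}_R(K,B)$ with a $0$ above it---does not need any resolution machinery: it is exactly the well-known left-exactness of the contravariant $\text{Hom}$ functor applied to a short exact sequence, which follows from the universal properties of kernels and cokernels applied pointwise.

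The main technical obstacle is verifying independence of the whole construction from the choice of projective resolutions and naturality of the connecting homomorphisms. This is settled by the Comparison Theorem for projective resolutions: any two projective resolutions of a given module are chain-homotopy equivalent via maps that are unique up to homotopy, so the induced maps on $\text{Ext}^n$ and the connecting homomorphisms are well-defined up to canonical isomorphism. With this in hand, the sequence produced above does not depend on any of the auxiliary choices made along the way.
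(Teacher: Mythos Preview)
Your argument is correct and is the standard textbook derivation of the long exact sequence for $\text{Ext}$ via the Horseshoe Lemma and the cohomology long exact sequence. Note, however, that the paper does not actually prove this statement: it is recorded as a \emph{Fact} with a pointer to standard references (e.g., Chapter~1 of Eklof--Mekler), so there is no ``paper's own proof'' to compare against. Your write-up supplies exactly the kind of proof one would find in those references.
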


We will exclusively focus on $\text{Ext}^1$, so will omit the superscript from now on.  Given a class $\mathcal{X}$ of modules, 
\[
{}^\perp \mathcal{X} := \bigcap_{X \in \mathcal{X}} \text{Ker} \ \text{Ext}(-,X)= \{ A \ : \ \forall X \in \mathcal{X} \  \text{Ext}(A,X)=0 \}
\]
and
\[
\mathcal{X}^\perp := \bigcap_{X \in \mathcal{X}} \text{Ker} \ \text{Ext}(X,-) = \{ B \ : \ \forall X \in \mathcal{X} \ \text{Ext}(X,B)=0 \}.
\]
A pair of classes $(\mathcal{A},\mathcal{B})$ is a \textbf{cotorsion pair} if $\mathcal{A}^\perp = \mathcal{B}$ and $\mathcal{A} = {}^\perp \mathcal{B}$.

\begin{lemma}[Eklof's Lemma, see \cite{MR2985554}, Lemma 6.2]\label{lem_EklofLemma}
For any class $\mathcal{B}$, ${}^\perp \mathcal{B}$ is filtration closed.  Recall from Section \ref{sec_Decon} this means that
\[
{}^\perp \mathcal{B} = \text{Filt}\left( {}^\perp \mathcal{B} \right).
\]
\end{lemma}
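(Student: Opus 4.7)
The plan is to establish the nontrivial inclusion $\text{Filt}({}^\perp \mathcal{B}) \subseteq {}^\perp \mathcal{B}$ directly from the $\text{Ext}$-vanishing criterion in Fact \ref{fact_CharactVanishExt} together with the long exact sequence of Fact \ref{fact_long_exact}, via a transfinite recursion mirroring the given filtration. Fix $M$ expressed as the union of a ${}^\perp \mathcal{B}$-filtration $\langle M_i : i \le \zeta \rangle$ and an arbitrary $B \in \mathcal{B}$; the goal is to show $\text{Ext}(M,B) = 0$, from which $M \in {}^\perp \mathcal{B}$ follows since $B$ was arbitrary.

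First I would choose a free presentation $0 \to K \hookrightarrow F \xrightarrow{\pi} M \to 0$ and set $F_i := \pi^{-1}(M_i)$. Routine checks confirm that $F_0 = K$, $F_\zeta = F$, the chain $\langle F_i \rangle$ is continuous (preimages commute with directed unions), and, by the Correspondence Theorem, $F_{i+1}/F_i \simeq M_{i+1}/M_i \in {}^\perp \mathcal{B}$. By Fact \ref{fact_CharactVanishExt}, it now suffices to show that an arbitrary homomorphism $\varphi: K \to B$ extends to a homomorphism $F \to B$.

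Next I would build by transfinite recursion a coherent sequence of homomorphisms $\psi_i: F_i \to B$ with $\psi_0 = \varphi$ and $\psi_i \restriction F_j = \psi_j$ whenever $j < i$. Limit stages are handled by taking unions, which are well-defined by coherence combined with continuity of $\langle F_i \rangle$. At successor stages, I would apply Fact \ref{fact_long_exact} to the short exact sequence
\[
0 \to F_i \to F_{i+1} \to F_{i+1}/F_i \to 0
\]
and the module $B$; the resulting long exact sequence contains the segment
\[
\text{Hom}(F_{i+1}, B) \to \text{Hom}(F_i, B) \to \text{Ext}(F_{i+1}/F_i, B),
\]
and the last group vanishes because $F_{i+1}/F_i \in {}^\perp \mathcal{B}$ and $B \in \mathcal{B}$. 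So the restriction map is surjective, and any preimage of $\psi_i$ serves as $\psi_{i+1}$. Then $\psi_\zeta: F \to B$ is the desired extension of $\varphi$.

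The main (and essentially only) nonroutine point is ensuring that the successor step is available, which reduces to the Correspondence Theorem identification $F_{i+1}/F_i \simeq M_{i+1}/M_i$; only via this identification can one import the hypothesis that the filtration quotients lie in ${}^\perp \mathcal{B}$. Everything else---continuity of $\langle F_i \rangle$, coherence of the $\psi_i$'s at limits, and the fact that $M_0 = 0$ forces $F_0 = K$ (so that $\psi_0 = \varphi$ is a legitimate starting point)---is straightforward bookkeeping.
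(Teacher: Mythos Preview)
The paper does not supply its own proof of this lemma; it is stated with a citation to G\"obel--Trlifaj~\cite{MR2985554}, Lemma 6.2, and then used freely thereafter. So there is no in-paper argument to compare against.

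Your argument is correct and is essentially the standard proof one finds in the cited reference: pull back the filtration along a free presentation, use the Correspondence Theorem to identify successive quotients, and recursively extend a given $K \to B$ up the chain using the long exact sequence (Fact~\ref{fact_long_exact}) at successor stages and unions at limit stages, finishing via Fact~\ref{fact_CharactVanishExt}. All the bookkeeping you flag (continuity of $\langle F_i\rangle$, $F_0 = K$, coherence at limits) is handled correctly.
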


Eklof and Trlifaj generalized Theorem \ref{thm_FlatCover} as follows.  A module $C$ is \textbf{pure-injective} if whenever $A$ is a pure submodule of $B$, then every morphism from $A \to C$ extends to a morphism from $B \to C$ (i.e., $C$ is ``injective with respect to all pure embeddings").  Their generalization states:\footnote{The class of all flat modules is equal to ${}^\perp \mathcal{B}$, where $\mathcal{B}$ is the class of all pure-injective modules (\cite{MR1778163}).  So the proof of Theorem \ref{thm_FlatCover} can be rephrased as:  ${}^\perp \mathcal{B}$ is deconstructible, where $\mathcal{B}$ is the class of all pure-injectives.  Theorem \ref{thm_EklofTrlifaj} generalized that result by replacing $\mathcal{B}$ with \emph{any} class of pure-injective modules.}

\begin{theorem}
[Eklof-Trlifaj~\cite{MR1778163}]\label{thm_EklofTrlifaj}
Suppose $R$ is a ring and $\mathcal{B}$ is a class of pure-injective $R$-modules.  Then ${}^\perp \mathcal{B}$ is deconstructible.
\end{theorem}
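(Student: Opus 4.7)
The plan is to verify clause \ref{item_MainChar} of Theorem \ref{thm_DeconChar} for $\mathcal{C} := {}^\perp \mathcal{B}$, taking $\kappa$ to be any regular cardinal exceeding $|R|$. Filtration closure of $\mathcal{C}$ is automatic from Eklof's Lemma \ref{lem_EklofLemma}, so the remaining task is: for a suitably elementary $\mathfrak{N}$ with $\mathfrak{N} \cap \kappa$ transitive (hence $R \subseteq \mathfrak{N}$) and $A \in \mathfrak{N} \cap {}^\perp \mathcal{B}$, show that both $\mathfrak{N} \cap A$ and $A/(\mathfrak{N} \cap A)$ belong to ${}^\perp \mathcal{B}$. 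Lemma \ref{lem_ElemPureReflect} delivers the purity of the inclusion $\mathfrak{N} \cap A \hookrightarrow A$, which is what drives everything.

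The quotient case will be immediate: for any $B \in \mathcal{B}$, pure-injectivity of $B$ applied to the pure short exact sequence $0 \to \mathfrak{N} \cap A \to A \to A/(\mathfrak{N} \cap A) \to 0$ makes $\text{Hom}(A, B) \twoheadrightarrow \text{Hom}(\mathfrak{N} \cap A, B)$ surjective; the long exact sequence of Fact \ref{fact_long_exact} then forces $\text{Ext}(A/(\mathfrak{N} \cap A), B) \hookrightarrow \text{Ext}(A, B) = 0$.

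The main obstacle will be showing $\text{Ext}(\mathfrak{N} \cap A, B) = 0$ for each $B \in \mathcal{B}$. I would choose, by elementarity, a free presentation $0 \to K \to F \xrightarrow{\pi} A \to 0$ with all data lying in $\mathfrak{N}$; Lemma \ref{lem_FreeTrace} then makes $\mathfrak{N} \cap F$ free, and elementarity renders $0 \to \mathfrak{N} \cap K \to \mathfrak{N} \cap F \to \mathfrak{N} \cap A \to 0$ exact. By Fact \ref{fact_CharactVanishExt} it will suffice to extend an arbitrary $f: \mathfrak{N} \cap K \to B$ to a map on $\mathfrak{N} \cap F$. The key trick is a \emph{double extension}: first, because $\mathfrak{N} \cap K \hookrightarrow K$ is pure (Lemma \ref{lem_ElemPureReflect} applied to $K$, which is in $\mathfrak{N}$) and $B$ is pure-injective, $f$ extends to some $g: K \to B$; then, because $A \in {}^\perp \mathcal{B}$, Fact \ref{fact_CharactVanishExt} extends $g$ to some $h: F \to B$; finally $h \restriction (\mathfrak{N} \cap F)$ is the desired extension of $f$. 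Pure-injectivity is really only used in the first of these two extensions, to get past the ``$\mathfrak{N}$-barrier'' on $K$; the second uses only the hypothesis on $A$. With clause \ref{item_MainChar} verified, Theorem \ref{thm_DeconChar} yields $\kappa$-deconstructibility of ${}^\perp \mathcal{B}$.
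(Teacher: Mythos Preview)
Your proposal is correct and matches the paper's own proof essentially step for step: the same choice of $\kappa$, the same appeal to Eklof's Lemma for filtration-closure, the same long-exact-sequence argument for the quotient, and the same ``double extension'' trick (extend $\mathfrak{N}\cap K \to B$ first to $K$ via pure-injectivity, then to $F$ via $A\in{}^\perp\mathcal{B}$, then restrict) for $\mathfrak{N}\cap A$. The only cosmetic difference is that the paper phrases the exactness of $0\to\mathfrak{N}\cap K\to\mathfrak{N}\cap F\to\mathfrak{N}\cap A\to 0$ via Lemma~\ref{lem_IsoQuotient} rather than a direct elementarity check.
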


\begin{remark}\label{rem_WeakFormPureInj}
The proof below shows that we can weaken the assumption on $\mathcal{B}$, to only require members of $\mathcal{B}$ to be injective with respect to very special kinds of pure embeddings:  those inclusions of the form $\mathfrak{N} \cap A \to A$ where $A \in \mathfrak{N}$ and $R \cup \{R \} \subset \mathfrak{N}$ (i.e., the kind from the statement of Lemma \ref{lem_ElemPureReflect}).   We do not know if this is a strictly weaker property than being pure injective; see Question \ref{q_STT_inj}.
\end{remark}

\begin{proof}
(of Theorem \ref{thm_EklofTrlifaj}) By Eklof's Lemma \ref{lem_EklofLemma}, ${}^\perp \mathcal{B}$ is closed under transfinite extensions, so it suffices to prove that it is weakly deconstructible.  Let $\kappa$ be any regular uncountable cardinal larger than $R$, and consider any $\mathfrak{N} \prec (H_\theta,\in,R,\kappa)$ with $\mathfrak{N} \cap \kappa$ transitive.  Then $R$ is both an element and subset of $\mathfrak{N}$ by Fact \ref{fact_BasicFactsElemSub}.  Consider any $A \in \mathfrak{N} \cap {}^\perp \mathcal{B}$; by Theorem \ref{thm_DeconChar}, it suffices to prove that:
\begin{enumerate}[label=(\roman*)]
  \item\label{item_DownReflect} $\mathfrak{N} \cap A \in {}^\perp \mathcal{B}$; and
  \item\label{item_QuotientPerpB} $\frac{A}{\mathfrak{N} \cap A} \in {}^\perp \mathcal{B}$.
\end{enumerate}

To prove part \ref{item_DownReflect}:  since $A \in \mathfrak{N} \prec (H_\theta,\in,R)$, there is a free module $F$ and a submodule $K$ such that $F,K \in \mathfrak{N}$ and $A \simeq F/K$.  Then by Lemma \ref{lem_IsoQuotient},
\[
\mathfrak{N} \cap A  \simeq \mathfrak{N} \cap \frac{F}{K} \simeq \frac{\mathfrak{N} \cap F}{\mathfrak{N} \cap K}.
\]
Since $F$ is free and $F \in \mathfrak{N}$, $\mathfrak{N} \cap F$ is free by Lemma \ref{lem_FreeTrace}.  So, in order to prove that $\frac{\mathfrak{N} \cap F}{\mathfrak{N} \cap K}$ is in ${}^\perp \mathcal{B}$, it suffices by Fact \ref{fact_CharactVanishExt} to prove that for every $B \in \mathcal{B}$:  every morphism $\varphi: \mathfrak{N} \cap K \to B$ extends to domain $\mathfrak{N} \cap F$.  But by Lemma \ref{lem_ElemPureReflect}, $\mathfrak{N} \cap K$ is a pure submodule of $K$. Hence, since $B$ is pure-injective, $\varphi$ extends to a homomorphism $\varphi_1: K \to B$.   Since $A=F/K \in {}^\perp \mathcal{B}$ by assumption, Fact \ref{fact_CharactVanishExt} ensures that $\varphi_1$ extends to a homomorphism $\varphi_2: F \to B$.  So by restricting $\varphi_2$ to $\mathfrak{N} \cap F$, we have the desired morphism from $\mathfrak{N} \cap F$ into $B$ extending $\varphi$.

To prove part \ref{item_QuotientPerpB}, fix any $B \in \mathcal{B}$, and consider the short exact sequence 
\begin{equation}\label{eq_SES_EklofTrlifaj}
\xymatrix{
0 \ar[r] & \mathfrak{N} \cap A \ar[r]^-i & A \ar[r] & \frac{A}{\mathfrak{N} \cap A} \ar[r] & 0
}
\end{equation}
where $i$ is inclusion, and the associated long exact sequence (cf. Fact \ref{fact_long_exact}) gotten by applying $\text{Hom}(-,B)$ to the short exact sequence \eqref{eq_SES_EklofTrlifaj}:
\begin{equation}\label{eq_AfterApplyHom}
\xymatrix{
\text{Hom}\left( \mathfrak{N} \cap A, B \right) \ar[drr] \ar@{<-}[r]^-{i^*} & \text{Hom}(A,B) \ar@{<-}[r] & \text{Hom}\left( \frac{A}{\mathfrak{N} \cap A}, B \right) \\
 & \underbrace{\text{Ext}(A,B)}_{=0 \text{ because } A\in {}^\perp \mathcal{B}} \ar@{<-}[r] & \text{Ext}\left( \frac{A}{\mathfrak{N} \cap A}, B \right)
}
\end{equation}
The map $i^*: \text{Hom}(A,B) \to \text{Hom}(\mathfrak{N} \cap A ,B)$ is the restriction map $\varphi \mapsto \varphi \restriction (\mathfrak{N} \cap A)$.  By Lemma \ref{lem_ElemPureReflect}, $\mathfrak{N} \cap A$ is a pure submodule of $A$, and since $B$ is pure-injective, it follows that the map $i^*$ is a surjection.   Then, since $\text{Ext}(A,B)=0$, exactness of \eqref{eq_AfterApplyHom} at the lower right entry implies that $\text{Ext} \left( \frac{A}{\mathfrak{N} \cap A},B \right)$ is zero.
\end{proof}

A close reading of the proof, together with Remark \ref{rem_LambdaPureLambdaclosed} used in place of Lemma \ref{lem_ElemPureReflect}, yields the next lemma, which will be used later on:  
\begin{lemma}\label{lem_LambdaClosedLeftPerp}
Suppose $R$ is a ring, $\lambda$ is an infinite regular cardinal, and $B$ is a $\lambda$-pure-injective $R$-module.  Then for any $\mathfrak{N} \prec (H_\theta,\in)$ that is $<\lambda$-closed,\footnote{For $\lambda = \aleph_0$ this is superfluous, by Fact \ref{fact_BasicFactsElemSub} part \eqref{item_FiniteSubset}.} the following implication holds for any $A \in \mathfrak{N}$:\footnote{Note we do \emph{not} assume $B \in \mathfrak{N}$.}
\[
A \in \mathfrak{N} \cap {}^\perp B \ \implies \left( \mathfrak{N} \cap A \in {}^\perp B \text{ and } \frac{A}{\mathfrak{N} \cap A} \in {}^\perp B \right).
\]
\end{lemma}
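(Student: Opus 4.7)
The plan is to mimic the proof of Theorem \ref{thm_EklofTrlifaj} almost verbatim, with three modifications: (i) replace every invocation of Lemma \ref{lem_ElemPureReflect} by Remark \ref{rem_LambdaPureLambdaclosed}, so ``pure'' upgrades to ``$\lambda$-pure'' in the submodule inclusions coming from $\mathfrak{N}$; (ii) replace ``pure-injective'' by ``$\lambda$-pure-injective'' everywhere $B$'s injectivity property is used to extend a homomorphism; and (iii) notice that the original proof never used $B \in \mathfrak{N}$---only $A \in \mathfrak{N}$ was needed, to produce a presentation and to form the short exact sequence $0 \to \mathfrak{N} \cap A \to A \to A/(\mathfrak{N} \cap A) \to 0$. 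So the weakened hypothesis on $B$ is not an obstacle.

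To prove $\mathfrak{N} \cap A \in {}^\perp B$: by elementarity, since $A \in \mathfrak{N}$, pick a free $R$-module $F$ and submodule $K$ with $F,K \in \mathfrak{N}$ and $A \simeq F/K$. Lemma \ref{lem_FreeTrace} gives that $\mathfrak{N} \cap F$ is free, and Lemma \ref{lem_IsoQuotient} gives $\mathfrak{N} \cap A \simeq (\mathfrak{N} \cap F)/(\mathfrak{N} \cap K)$. Using Fact \ref{fact_CharactVanishExt}, it then suffices to extend an arbitrary $\varphi : \mathfrak{N} \cap K \to B$ to $\mathfrak{N} \cap F$. By Remark \ref{rem_LambdaPureLambdaclosed}, $\mathfrak{N} \cap K$ is $\lambda$-pure in $K$, so $\lambda$-pure-injectivity of $B$ extends $\varphi$ to some $\varphi_1 : K \to B$; then $A \in {}^\perp B$ (via Fact \ref{fact_CharactVanishExt}) extends $\varphi_1$ to $\varphi_2 : F \to B$, whose restriction to $\mathfrak{N} \cap F$ is the desired extension.

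To prove $A/(\mathfrak{N} \cap A) \in {}^\perp B$: apply $\text{Hom}(-,B)$ to the short exact sequence $0 \to \mathfrak{N} \cap A \to A \to A/(\mathfrak{N} \cap A) \to 0$ and consider the associated long exact sequence from Fact \ref{fact_long_exact}. The restriction map $i^* : \text{Hom}(A,B) \to \text{Hom}(\mathfrak{N} \cap A, B)$ is surjective, because $\mathfrak{N} \cap A$ is $\lambda$-pure in $A$ (Remark \ref{rem_LambdaPureLambdaclosed}) and $B$ is $\lambda$-pure-injective. Combined with $\text{Ext}(A,B) = 0$, exactness at $\text{Ext}(A/(\mathfrak{N} \cap A),B)$ forces this group to vanish.

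The main thing to be careful about is that Lemma \ref{lem_IsoQuotient}, Lemma \ref{lem_FreeTrace}, and Remark \ref{rem_LambdaPureLambdaclosed} require $R \cup \{R\} \subset \mathfrak{N}$; this is secured by the $<\lambda$-closedness of $\mathfrak{N}$ together with the standing assumption $|R|<\lambda$ (which is what makes the relevant ``$\lambda$-pure'' machinery applicable in the first place, since $R \in \mathfrak{N}$ by elementarity once $A \in \mathfrak{N}$). Beyond this bookkeeping, the argument is line-for-line the proof of Theorem \ref{thm_EklofTrlifaj}.
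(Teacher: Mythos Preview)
Your approach is exactly the paper's: it states only that ``a close reading of the proof [of Theorem~\ref{thm_EklofTrlifaj}], together with Remark~\ref{rem_LambdaPureLambdaclosed} used in place of Lemma~\ref{lem_ElemPureReflect}, yields the next lemma,'' and your write-up carries this out line for line, including the observation that $B \in \mathfrak{N}$ was never used.

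One caveat on your final paragraph: there is no ``standing assumption $|R|<\lambda$'' in the lemma (in the later applications $\lambda$ is fixed while a separate $\kappa > |R|$ is chosen large), and $<\lambda$-closure of $\mathfrak{N}$ does not by itself convert $R \in \mathfrak{N}$ into $R \subset \mathfrak{N}$. The hypothesis $R \cup \{R\} \subset \mathfrak{N}$ is really an implicit standing assumption that the paper's statement of the lemma omits; it holds in every context where the lemma is invoked (e.g.\ the proof of Theorem~\ref{thm_CoxGeneralizeET}, where $\mathfrak{N} \cap \kappa$ is transitive for some $\kappa > |R|$), but your stated justification for it is not correct.
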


\subsection{Kaplansky's Theorem}\label{sec_Kaplansky}

In this section we show how (a minor variant of) Theorem \ref{thm_DeconChar} yields the following classic theorem of Kaplansky:

\begin{theorem}[Kaplansky~\cite{MR0100017}]\label{thm_Kaplansky}
Every projective module is a direct sum of countably-generated projective modules.
\end{theorem}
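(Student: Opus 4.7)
The plan is to prove a direct-sum refinement of the argument that would establish $\aleph_1$-(weak)-deconstructibility of the class of projectives via Theorem \ref{thm_DeconChar}: for any countable elementary submodel $\mathfrak{N}$ containing the relevant parameters and any projective $P\in\mathfrak{N}$, the submodule $\langle\mathfrak{N}\cap P\rangle$ is not merely a countably generated projective but actually a \emph{direct summand} of $P$. Iterating along a continuous chain of such $\mathfrak{N}$ then yields a filtration of $P$ by direct summands, which is routinely refined into a direct sum decomposition. Importantly, I do not assume $R\subseteq\mathfrak{N}$, so the proof applies uniformly to rings of arbitrary cardinality.

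Fix $P$ projective together with a decomposition $F=P\oplus Q$ in which $F$ is free on a basis $B$, and let $\pi_P,\pi_Q$ be the corresponding projections. Take a $\subseteq$-increasing, continuous chain $(\mathfrak{N}_\alpha)_{\alpha<\lambda}$ of countable elementary submodels of $(H_\theta,\in)$ with $\{R,F,B,P,Q\}\subset\mathfrak{N}_0$ and $P\cup Q\cup B\subseteq\bigcup_{\alpha<\lambda}\mathfrak{N}_\alpha$, and set $P_\alpha:=\langle\mathfrak{N}_\alpha\cap P\rangle$, $Q_\alpha:=\langle\mathfrak{N}_\alpha\cap Q\rangle$, and $F_1^\alpha:=\langle\mathfrak{N}_\alpha\cap B\rangle$. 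By Lemma \ref{lem_FreeTrace}, $F=F_1^\alpha\oplus\langle B\setminus\mathfrak{N}_\alpha\rangle$. Since $\pi_P,\pi_Q\in\mathfrak{N}_\alpha$ by elementarity, and for each $b\in\mathfrak{N}_\alpha\cap B$ the $B$-supports of $\pi_P(b)$ and $\pi_Q(b)$ lie in $\mathfrak{N}_\alpha\cap B$ (by the support argument in the proof of Lemma \ref{lem_FreeTrace}), the submodule $F_1^\alpha$ is invariant under both projections and splits as $F_1^\alpha=P_\alpha\oplus Q_\alpha$. Combined with the preceding splitting this gives the triple decomposition $F=P_\alpha\oplus Q_\alpha\oplus\langle B\setminus\mathfrak{N}_\alpha\rangle$, from which a direct calculation yields
\[
P=P_\alpha\oplus\bigl(P\cap(Q_\alpha\oplus\langle B\setminus\mathfrak{N}_\alpha\rangle)\bigr).
\]
Thus each $P_\alpha$ is a direct summand of $P$, hence also of $P_{\alpha+1}$; pick a countably generated projective complement $C_\alpha$ with $P_{\alpha+1}=P_\alpha\oplus C_\alpha$.

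To finish, continuity of $(\mathfrak{N}_\alpha)_\alpha$ transfers to $(P_\alpha)_\alpha$, and the exhaustion hypothesis forces $\bigcup_\alpha P_\alpha=P$. A straightforward transfinite induction (successor step from $P_{\alpha+1}=P_\alpha\oplus C_\alpha$, limit step from continuity) then shows $P_\alpha=\bigoplus_{\beta<\alpha}C_\beta$ for every $\alpha\le\lambda$, so $P=\bigoplus_{\alpha<\lambda}C_\alpha$ is a direct sum of countably generated projectives, as required. The main obstacle is upgrading ``$P_\alpha$ is a direct summand of the free module $F_1^\alpha$''---which is immediate from Lemma \ref{lem_FreeTrace}---to ``$P_\alpha$ is a direct summand of $P$ itself''; this is exactly where the invariance of $F_1^\alpha$ under the projections $\pi_P,\pi_Q$ is crucial, and is what justifies viewing the proof as a ``direct-summand enhanced'' instance of the top-down characterization.
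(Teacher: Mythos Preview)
Your proof is correct and essentially the same as the paper's: both fix a decomposition $F = P \oplus Q$ with $F$ free, use countable $\mathfrak{N} \prec (H_\theta,\in)$ containing these parameters, establish that $\langle \mathfrak{N} \cap F \rangle = \langle \mathfrak{N} \cap P \rangle \oplus \langle \mathfrak{N} \cap Q \rangle$ via Lemma~\ref{lem_FreeTrace}, and conclude that the filtration of $P$ by the $\langle \mathfrak{N}_\alpha \cap P \rangle$'s refines to a direct sum of countably generated projectives. The only cosmetic difference is that the paper deduces the splitting from projectivity of the quotients $P/\langle \mathfrak{N} \cap P \rangle$ (obtained as summands of the free module $F/\langle \mathfrak{N} \cap F \rangle$, then invoking the variant of Theorem~\ref{thm_DeconChar}), whereas you exhibit the complement $P \cap (Q_\alpha \oplus \langle B \setminus \mathfrak{N}_\alpha \rangle)$ directly from the triple decomposition of $F$.
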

\begin{proof}
First, notice that if $\langle P_i \ : \ i \le \zeta \rangle$ is a filtration with each $P_{i+1}/P_i$ projective, then each inclusion $P_i \to P_{i+1}$ splits; i.e., 
\[
P_{i+1} \simeq P_i \oplus \frac{P_{i+1}}{P_i}.
\]
So $P_\zeta$ is isomorphic to the direct sum
\[
\bigoplus_{i < \zeta} P_{i+1}/P_i.
\]
So it suffices to show that the class of projective modules is $\mathcal{P}_{\aleph_1}$-filtered, where $\mathcal{P}_{\aleph_1}$ is the class of countably-generated (equivalently, countably-presented) projective modules.

By a minor variant of the proof of the \ref{item_MainChar} $\implies$ \ref{item_KappaDecon} direction of Theorem \ref{thm_DeconChar} (with $\kappa = \omega_1$), it suffices to show that whenever $P$ is projective and
\[
P \in \mathfrak{N} \prec (H_\theta,\in,R)
\]
(but where $R$ is not necessarily a subset of $\mathfrak{N}$), then both $\langle \mathfrak{N} \cap P \rangle$ and $\frac{P}{\langle \mathfrak{N} \cap P\rangle}$ are projective.  Lemma 3.3 of \cite{Cox_MaxDecon} gives a direct proof of this using the Dual Basis characterization of projectivity, but we can also prove it using Lemma \ref{lem_FreeTrace}.  By elementarity of $\mathfrak{N}$, there is some free $F \in \mathfrak{N}$ such that $P$ is a direct summand of $F$; and again by elementarity we can take the complement to be in $\mathfrak{N}$.  So there is some module $X \in \mathfrak{N}$ such that $F = P \oplus X$.  Using elementarity of $\mathfrak{N}$ (and the fact that $\{ P, F, X \} \subset \mathfrak{N}$), it is routine to verify that
\begin{equation}
\langle \mathfrak{N} \cap F \rangle \simeq \langle \mathfrak{N} \cap P \rangle \oplus \langle \mathfrak{N} \cap X \rangle.
\end{equation}
Since $\langle \mathfrak{N} \cap F \rangle$ is free by Lemma \ref{lem_FreeTrace}, this shows that $\langle \mathfrak{N} \cap P \rangle$ is a direct summand of a free module, and is hence projective.  And
\[
\frac{F}{\langle \mathfrak{N} \cap F \rangle} \simeq \frac{P \oplus X}{\langle \mathfrak{N} \cap P \rangle \oplus \langle \mathfrak{N} \cap X \rangle} \simeq \frac{P}{\langle \mathfrak{N} \cap P \rangle} \oplus \frac{X}{\langle \mathfrak{N} \cap X \rangle } 
\] 
By Lemma \ref{lem_FreeTrace}, $\frac{F}{\langle \mathfrak{N} \cap F \rangle}$ is free.  Hence, $\frac{P}{\langle \mathfrak{N} \cap P \rangle}$ is a direct summand of a free module, and is hence projective.
\end{proof}

\subsection{Preservation of deconstructibility: set-sized intersections}\label{sec_SetSizedIntersect}

Recall the following theorem from the introduction:
\begin{theorem}
[\v{S}t'ov\'{\i}\v{c}ek~\cite{MR3010854}]
If $I$ is a set and $\mathcal{C}_i$ is a deconstructible class of $R$-modules for each $i \in I$, then $\bigcap_{i \in I} \mathcal{C}_i$ is deconstructible.
\end{theorem}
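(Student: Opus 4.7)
The plan is to use Theorem \ref{thm_DeconChar} applied to $\mathcal{C} := \bigcap_{i \in I} \mathcal{C}_i$, first verifying that $\mathcal{C}$ is filtration-closed and then checking clause \ref{item_MainChar} for a suitably chosen cardinal $\kappa$. Filtration-closure is essentially automatic: a $\mathcal{C}$-filtration of $M$ is in particular a $\mathcal{C}_i$-filtration for every $i$, so $M \in \text{Filt}(\mathcal{C}_i) = \mathcal{C}_i$ for each $i$, hence $M \in \mathcal{C}$. So the real content is clause \ref{item_MainChar}.

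For that, for each $i \in I$ I fix a regular uncountable $\kappa_i > |R|$ witnessing $\kappa_i$-deconstructibility of $\mathcal{C}_i$, together with a threshold $\theta^*_i$ above which clause \ref{item_MainChar} of Theorem \ref{thm_DeconChar} holds for $\mathcal{C}_i$. Because $I$ is a set (not a proper class), I can pick a single regular uncountable $\kappa$ dominating $|R|$, $|I|$, and every $\kappa_i$, and a single regular $\theta$ dominating every $\theta^*_i$ and large enough that the parameters $(\mathcal{C}_i \cap H_\theta)_{i \in I}$ and $(\kappa_i)_{i \in I}$ both lie in $H_\theta$. I will then consider
\[
\mathfrak{N} \prec (H_\theta, \in, R, \kappa, \mathcal{C} \cap H_\theta, f, g)
\]
with $\mathfrak{N} \cap \kappa$ transitive, where $f : i \mapsto \mathcal{C}_i \cap H_\theta$ and $g : i \mapsto \kappa_i$ are added as function symbols in the expanded signature.

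The core step is to show that such an $\mathfrak{N}$ inherits, for every $i \in I$ separately, the hypothesis of clause \ref{item_MainChar} applied to $\mathcal{C}_i$. Since $I \in \mathfrak{N}$ and $|I| < \kappa$, Fact \ref{fact_BasicFactsElemSub} forces $I \subseteq \mathfrak{N}$; then for each $i \in I$ both $\kappa_i = g(i)$ and $\mathcal{C}_i \cap H_\theta = f(i)$ are elements of $\mathfrak{N}$, and transitivity of $\mathfrak{N} \cap \kappa$ forces each $\mathfrak{N} \cap \kappa_i$ to be transitive as well. Given any $C \in \mathfrak{N} \cap \mathcal{C}$ and any $i \in I$, clause \ref{item_MainChar} for $\mathcal{C}_i$ (whose parameters $R, \kappa_i, \mathcal{C}_i \cap H_\theta$ are all available in $\mathfrak{N}$) then yields $\mathfrak{N} \cap C \in \mathcal{C}_i$ and $C/(\mathfrak{N} \cap C) \in \mathcal{C}_i$. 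Intersecting over $i$ gives $\mathfrak{N} \cap C, \, C/(\mathfrak{N} \cap C) \in \mathcal{C}$, which is clause \ref{item_MainChar} for $\mathcal{C}$; Theorem \ref{thm_DeconChar} then concludes that $\mathcal{C}$ is $\kappa$-deconstructible.

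The only real obstacle is set-theoretic bookkeeping: arranging for a single $\mathfrak{N}$ to witness clause \ref{item_MainChar} simultaneously for every $\mathcal{C}_i$. The resolution is to package the families $(\mathcal{C}_i \cap H_\theta)_{i \in I}$ and $(\kappa_i)_{i \in I}$ as honest functions $f$ and $g$ in the expanded structure, so that $\mathfrak{N}$'s elementarity under these single function symbols---combined with $|I| < \kappa$ and transitivity of $\mathfrak{N} \cap \kappa$---automatically delivers each $\mathcal{C}_i \cap H_\theta$ and $\kappa_i$ as parameters inside $\mathfrak{N}$, with the correct transitivity property. This is precisely where the assumption that $I$ is a set (so that $\kappa$ can be chosen to bound $|I|$ and all of the $\kappa_i$) is indispensable; if $I$ were a proper class, no single $\kappa$ could uniformly dominate the $\kappa_i$'s, and the uniform reduction to the single-class case would break down.
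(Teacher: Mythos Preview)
Your argument is essentially correct and follows the same strategy as the paper: package the data $(\mathcal{C}_i)_{i \in I}$ and $(\kappa_i)_{i \in I}$ as parameters in a single structure, so that one elementary submodel $\mathfrak{N}$ simultaneously witnesses the reflection for every $\mathcal{C}_i$.  There is, however, a small but genuine labeling gap.  You claim to be verifying clause \ref{item_MainChar} for $\mathcal{C}$, but \ref{item_MainChar} is quantified over \emph{all} $\mathfrak{N} \prec (H_\theta,\in,R,\kappa,\mathcal{C}\cap H_\theta)$, whereas you only check the implication for $\mathfrak{N}$ elementary in the \emph{richer} structure carrying $f$ and $g$.  Since $f$ and $g$ are not definable from $\mathcal{C}\cap H_\theta$ alone, this is a strictly smaller collection of $\mathfrak{N}$'s, so \ref{item_MainChar} is not literally established.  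What you \emph{have} verified is precisely clause \ref{item_non_diag} (with a structure that happens not to depend on $C$), and the implication \ref{item_non_diag} $\Rightarrow$ \ref{item_KappaDecon} then finishes the proof.  The fix is just to cite \ref{item_non_diag} rather than \ref{item_MainChar} at the end.

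The paper's proof makes exactly this move explicit: it invokes \ref{item_non_diag} $\Rightarrow$ \ref{item_KappaDecon}, and its packaging is slightly different---rather than adding $f,g$ as function symbols on $H_\theta$, it passes to a larger $H_\Omega$ containing the sequence $\langle \mathfrak{A}^\theta_i : i \in I\rangle$ of structures as an element, then uses that $\mathfrak{N}\cap H_\theta \prec \mathfrak{A}^\theta_i$ for each $i$.  Your single-level approach with $f,g$ is a perfectly good alternative and arguably more direct; the two-level approach has the minor advantage that $\mathfrak{N}\cap H_\theta$ is literally elementary in the exact structure $(H_\theta,\in,R,\kappa,\mathcal{C}_i\cap H_\theta)$ appearing in \ref{item_MainChar} for $\mathcal{C}_i$, avoiding the need to observe that elements of $\mathfrak{N}$ can be adjoined as constants.
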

We provide a few details that were missing in the sketch in the introduction.  

\begin{proof}
Closure of each $\mathcal{C}_i$ under transfinite extensions implies that $\mathcal{C}:=\bigcap_{i \in I} \mathcal{C}_i$ is closed under transfinite extensions.  It remains to show that $\mathcal{C}$ is \emph{weakly} deconstructible.

For each $i \in I$ let $\kappa_i$ witness deconstructibility of $\mathcal{C}_i$, and fix a regular $\kappa$ with 
\[
\kappa > |I| + |R| + \text{sup}_{i \in I} \kappa_i.
\]
Then each $\mathcal{C}_i$ is also $\kappa$-deconstructible (since $\kappa \ge \kappa_i$).  By the \ref{item_KappaDecon} $\implies$ \ref{item_MainChar} direction of Theorem \ref{thm_DeconChar}, for all sufficiently large regular $\theta > \kappa$ and each $i \in I$, if 
\[
\mathfrak{N} \prec \mathfrak{A}^\theta_i:=\left( H_\theta,\in, R, \mathcal{C}_i \cap H_\theta \right)
\]
and $\mathfrak{N} \cap \kappa$ is transitive, then 
\[
C \in \mathfrak{N} \cap \mathcal{C}_i \ \implies \ \left( \text{ both } \mathfrak{N} \cap C \text{ and } \frac{C}{\mathfrak{N} \cap C} \text{ are in } \mathcal{C}_i \right).
\]

By the \ref{item_non_diag} $\implies$ \ref{item_KappaDecon} direction of Theorem \ref{thm_DeconChar}, to show that $\mathcal{C}$ is $\kappa$-deconstructible, it suffices to prove that for any fixed $C \in \mathcal{C}$, there is a structure in a countable signature (possibly depending on the module $C$) such that elementary submodels of it have the required reflection properties with respect to the module $C$.  Fix such a $C$, and let $\theta$ be a regular cardinal with $C \in H_\theta$ (and sufficiently large that $\mathfrak{A}_i^\theta$ is defined for each $i \in I$).  Set $\Omega:= \left( 2^{|H_\theta|} \right)^+$ and 
\[
\mathfrak{B}:=\left( H_{\Omega}, \in, R, I, \kappa, \langle \mathfrak{A}_i^\theta \ : \ i \in I \rangle, C \right).
\]
Consider any $\mathfrak{N} \prec \mathfrak{B}$ such that $\mathfrak{N} \cap \kappa$ is transitive.  Then $I$ is both an element and subset of $\mathfrak{N}$, and $\mathfrak{A}_i^\theta \in \mathfrak{N}$ for all $i \in I$.  It follows that
\[
\forall i \in I \ \ \mathfrak{N} \cap H_\theta \prec \mathfrak{A}^\theta_i,
\]
and hence, since $C \in \mathfrak{N} \cap H_\theta$, we have
\[
\forall i \in I \ \mathfrak{N} \cap C \in \mathcal{C}_i \text{ and } \frac{C}{\mathfrak{N} \cap C} \in \mathcal{C}_i.
\]
I.e., both $\mathfrak{N} \cap C$ and $\frac{C}{\mathfrak{N} \cap C}$ are in $\mathcal{C}$.
\end{proof}

\subsection{Preservation of deconstructibility: direct summands}

\begin{theorem}[\v{S}\v{t}ov\'{\i}\v{c}ek-Trlifaj~\cite{MR2476814}]\label{thm_DirectSummand}
If $\mathcal{C}$ is a decontructible class of modules, then so is the class
\[
\mathcal{C}_\oplus:= \{ M \ : \ \exists C \in \mathcal{C} \ \ M \text{ is a direct summand of } C  \}.
\]
\end{theorem}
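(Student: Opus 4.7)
The plan is to apply the characterization of deconstructibility given by Theorem \ref{thm_DeconChar}, specifically the implication \ref{item_non_diag} $\Longrightarrow$ \ref{item_KappaDecon}. Fix a cardinal $\kappa > |R|$ witnessing $\kappa$-deconstructibility of $\mathcal{C}$. Two things need verification: that $\mathcal{C}_\oplus$ is filtration-closed, and that it satisfies the non-diagonal reflection clause \ref{item_non_diag}.

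For filtration-closure, suppose $\langle M_i \ : \ i \le \zeta \rangle$ is a $\mathcal{C}_\oplus$-filtration. For each $i < \zeta$ choose $Y_i$ such that $(M_{i+1}/M_i) \oplus Y_i \in \mathcal{C}$. Then setting $N_j := M_j \oplus \bigoplus_{i<j} Y_i$ yields a continuous chain with $N_{j+1}/N_j \simeq (M_{j+1}/M_j) \oplus Y_j \in \mathcal{C}$; since $\mathcal{C}$ is filtration-closed, $N_\zeta = M_\zeta \oplus \bigoplus_{i<\zeta} Y_i$ lies in $\mathcal{C}$, exhibiting $M_\zeta$ as a direct summand of a member of $\mathcal{C}$, so $M_\zeta \in \mathcal{C}_\oplus$.

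To verify clause \ref{item_non_diag} for $\mathcal{C}_\oplus$, fix any $M \in \mathcal{C}_\oplus$ together with witnesses $C \in \mathcal{C}$ and $X$ with $C = M \oplus X$. Apply clause \ref{item_non_diag} for $\mathcal{C}$ to $C$, producing a transitive ZFC$^-$ model $(H,\in)$ containing $\{\kappa,R,C\}$ (we may enlarge $H$ to also contain $M$ and $X$) and a countable-signature expansion $\mathfrak{A}_0$ of $(H,\in,C)$ such that every $\mathfrak{N} \prec \mathfrak{A}_0$ with $\mathfrak{N} \cap \kappa$ transitive satisfies $\mathfrak{N} \cap C \in \mathcal{C}$ and $C/(\mathfrak{N} \cap C) \in \mathcal{C}$. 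Let $\mathfrak{A}$ expand $\mathfrak{A}_0$ by two constant symbols interpreted as $M$ and $X$; this keeps the signature countable. For any $\mathfrak{N} \prec \mathfrak{A}$ with $\mathfrak{N} \cap \kappa$ transitive, we have $\{M,X,C\} \subseteq \mathfrak{N}$ and $R \subseteq \mathfrak{N}$ (since $|R|<\kappa$). Elementarity together with the decomposition $C = M \oplus X$ gives
\[
\mathfrak{N} \cap C = (\mathfrak{N} \cap M) \oplus (\mathfrak{N} \cap X),
\]
so $\mathfrak{N} \cap M$ is a direct summand of $\mathfrak{N} \cap C \in \mathcal{C}$, hence $\mathfrak{N} \cap M \in \mathcal{C}_\oplus$. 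Passing to quotients,
\[
\frac{C}{\mathfrak{N} \cap C} \simeq \frac{M}{\mathfrak{N} \cap M} \oplus \frac{X}{\mathfrak{N} \cap X},
\]
so $M/(\mathfrak{N} \cap M)$ is a direct summand of $C/(\mathfrak{N} \cap C) \in \mathcal{C}$, hence in $\mathcal{C}_\oplus$. Thus $(H,\mathfrak{A})$ witnesses \ref{item_non_diag} for $\mathcal{C}_\oplus$ at $M$.

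The main subtlety is that a witness $C$ for $M \in \mathcal{C}_\oplus$ may be arbitrarily large relative to $M$; this is exactly why the module-by-module clause \ref{item_non_diag}, which allows $H$ to depend on the module under consideration, is more convenient here than the uniform clause \ref{item_MainChar} (where one would worry about whether every $M \in \mathcal{C}_\oplus \cap H_\theta$ has a witness inside $H_\theta$). Once filtration-closure and clause \ref{item_non_diag} are established, Theorem \ref{thm_DeconChar} immediately delivers that $\mathcal{C}_\oplus$ is $\kappa$-deconstructible, completing the proof.
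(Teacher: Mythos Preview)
Your proof is correct and follows essentially the same approach as the paper: verify clause \ref{item_non_diag} for $\mathcal{C}_\oplus$ by fixing witnesses $C,X$ for $M$, building a structure that has $M,X,C$ as constants, and then using the direct-sum decompositions $\mathfrak{N}\cap C=(\mathfrak{N}\cap M)\oplus(\mathfrak{N}\cap X)$ and $C/(\mathfrak{N}\cap C)\simeq M/(\mathfrak{N}\cap M)\oplus X/(\mathfrak{N}\cap X)$. The only minor difference is that the paper sets $\mathfrak{A}=(H_\theta,\in,R,\kappa,X,C,M,\mathcal{C}\cap H_\theta)$ directly and invokes clause \ref{item_MainChar} for $\mathcal{C}$ (via the predicate $\mathcal{C}\cap H_\theta$), which sidesteps your ``enlarge $H$'' parenthetical; the paper also omits the filtration-closure argument that you supply.
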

\begin{proof}
Suppose $\mathcal{C}$ is $\kappa$-deconstructible.  Fix any $M \in \mathcal{C}_{\oplus}$; by clause \ref{item_non_diag} of Theorem \ref{thm_DeconChar}, it suffices to find a first order structure (possibly depending on $M$) whose elementary submodels reflect in the desired way.  

By definition of $\mathcal{C}_{\oplus}$, there is some $C \in \mathcal{C}$ such that $M \oplus X = C$ for some $X$.  Fix a regular $\theta$ such that $M,X,C \in H_\theta$,\footnote{For classes $\mathcal{C}$ that are $\Sigma_1$ definable over $(V,\in)$, any $H_\theta$ with $M \in H_\theta$ will automatically have such an $X$ and $C$.  But here we are \textbf{not} assuming $\mathcal{C}$ is $\Sigma_1$ definable, hence the need to explicitly choose $\theta$ large enough to contain such a $C$ and $X$, and use clause \ref{item_non_diag} of Theorem \ref{thm_DeconChar}.} and set
\[
\mathfrak{A}:= (H_\theta,\in,R,\kappa, X,C,M, \mathcal{C} \cap H_\theta).
\]
We claim that $\mathfrak{A}$ satisfies clause \ref{item_non_diag} of Theorem \ref{thm_DeconChar}, for the class $\mathcal{C}_{\oplus}$.  Consider any $\mathfrak{N} \prec \mathfrak{A}$ such that $\mathfrak{N} \cap \kappa$ is transitive.  By equivance of \ref{item_KappaDecon} with \ref{item_MainChar} for the class $\mathcal{C}$, both $\mathfrak{N} \cap C$ and $\frac{C}{\mathfrak{N} \cap C}$ are in $\mathcal{C}$.  And, similarly as in our proof of Kaplansky's Theorem (Theorem \ref{thm_Kaplansky}), 
\[
\mathfrak{N} \cap C = (\mathfrak{N} \cap M) \oplus (\mathfrak{N}\cap X)
\]
and
\[
\frac{C}{\mathfrak{N} \cap C} = \frac{M \oplus X}{\mathfrak{N} \cap (M \oplus X)} = \frac{M \oplus X}{(\mathfrak{N} \cap M) \oplus (\mathfrak{N} \cap X)} \simeq \frac{M}{\mathfrak{N} \cap M} \oplus \ \frac{X}{\mathfrak{N} \cap X}.
\]
In particular, $\mathfrak{N} \cap M$ is a direct summand of $\mathfrak{N} \cap C$, and $\frac{M}{\mathfrak{N} \cap M}$ is a direct summand of $\frac{C}{\mathfrak{N} \cap C}$.  Hence, both $\mathfrak{N} \cap M$ and $\frac{M}{\mathfrak{N} \cap M}$ are in $\mathcal{C}_\oplus$.    

We omit the proof that $\mathcal{C}_\oplus$ is closed under transfinite extensions;  it follows from the fact that $\mathcal{C}$ is closed under transfinite extensions, and has nothing to do with deconstructibility or elementary submodels. 
\end{proof}

\section{Traces of elementary submodels on more complicated objects}\label{sec_TraceComplicated}

So far, we have only discussed traces of set-theoretic elementary submodels on relatively simple objects like modules and homomorphisms of modules.  But many of the applications  will require taking traces on more complicated objects, such as complexes.  A \textbf{complex} (of $R$-modules) is a sequence
\[
\xymatrix{
\dots \ar[r] & M_{n-1} \ar[r]^-{d_{n-1}} & M_n \ar[r]^-{d_{n}} & M_{n+1} \ar[r] & \dots
}
\]
of homomorphisms indexed by $\mathbb{Z}$, with the property that $d_{n+1} \circ d_n = 0$---i.e., $\text{im} \ d_{n-1} \subseteq \text{ker} \ d_n$---for all $n \in \mathbb{Z}$.
We will typically use $M_\bullet$ to denote a complex, where the map names are typically omitted.  $\boldsymbol{Ch(R)}$ denotes the category of complexes of $R$-modules, where a morphism from a complex $A_\bullet$ to a complex $B_\bullet$ is a \textbf{chain map}, which is a sequence $f_n: A_n \to B_n$ (for $n \in \mathbb{Z}$) of $R$-module homomorphisms such that the following diagram commutes in each square:
\[
\xymatrix{
  \cdots \ar[r] & A_{n+1} \ar[d]^{f_{n+1}} \ar[r]^{d_{n+1}} & A_n \ar[d]^{f_n} \ar[r]^{d_{n}} & A_{n-1} \ar[d]^{f_{n-1}} \ar[r] & \cdots \\
  \cdots \ar[r] & B_{n+1} \ar[r]_{e_{n+1}} & B_n \ar[r]_{e_{n}} & B_{n-1} \ar[r] & \cdots 
}
\]

A \textbf{short exact sequence of complexes} is a sequence of chain maps
\begin{equation}\label{eq_SES_of_complexes}
\xymatrix{
0_\bullet \ar[r] & A_\bullet \ar[r]^-e & B_\bullet \ar[r]^-\pi &  C_\bullet \ar[r] & 0_\bullet
}
\end{equation}
such that the image of each chain map equals the kernel of the next, where the kernel of a chain map $f: X_\bullet \to Y_\bullet$ is the complex of kernels of the $f_n$'s (which can easily be shown to form a complex).  If $e: A_\bullet \to B_\bullet$ is an injective chain map, then there is a quotient object $B_\bullet / e(A_\bullet)$ yielding a short exact sequence of complexes:\footnote{Where the $n$-th module in the quotient is simply $B_n / e_n(A_n)$, each $\pi_n$ is the canonical factor map from $B_n$ onto $B_n / e_n(A_n)$, and the connecting map from $B_n / e_n(A_n)$ to $B_{n+1} / e_{n+1}(A_{n+1})$ is defined as
\[
b + e_n(A_n) \mapsto f_n(b) + e_{n+1}(A_{n+1})
\]
where $f_n: B_n \to B_{n+1}$ is the connecting map in the complex $B_\bullet$.}
\[
\xymatrix{
0_\bullet \ar[r] &  A_\bullet  \ar[r]^-e & B_\bullet \ar[r]^-\pi  & B_\bullet / e(A_\bullet) \ar[r] & 0_\bullet 
}
\]

A complex $M_\bullet$ is  \textbf{acyclic} if it is exact at every coordinate, i.e., $\text{im} \ d_{n-1} = \text{ker} \ d_n$ for all $n$ (recall that being a complex only requires the $\subseteq$ direction).  The individual complexes in a short exact sequence of complexes, as in \eqref{eq_SES_of_complexes}, may or may not be acyclic complexes.  The following basic tool from homological algebra (proved via straightforward diagram chases) is frequently used to determine when the complexes are acyclic.

\begin{lemma}[3-by-3 lemma for complexes, Weibel~\cite{MR1269324}]\label{lem_3_by_3_lemma}
If 
\[
\xymatrix{
0_\bullet \ar[r] & A_\bullet \ar[r] & B_\bullet \ar[r] & C_\bullet \ar[r] & 0_\bullet
}
\]
is a short exact sequence of complexes, and two out of the members of the set $\{ A_\bullet, B_\bullet, C_\bullet \}$ are acyclic, then so is the third.
\end{lemma}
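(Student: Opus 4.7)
The plan is to reduce the statement to the long exact sequence in (co)homology associated to a short exact sequence of complexes, which is the standard way of converting local (degree-wise) exactness data into global acyclicity statements. Recall that acyclicity of a complex $M_\bullet$ is equivalent to the vanishing of all the (co)homology modules $H^n(M_\bullet) := \ker d_n / \mathrm{im}\, d_{n-1}$. The key homological fact I would invoke is that applying the homology functor degree-wise to the short exact sequence $0_\bullet \to A_\bullet \xrightarrow{e} B_\bullet \xrightarrow{\pi} C_\bullet \to 0_\bullet$ yields a long exact sequence
\[
\cdots \to H^{n-1}(C_\bullet) \xrightarrow{\delta} H^n(A_\bullet) \to H^n(B_\bullet) \to H^n(C_\bullet) \xrightarrow{\delta} H^{n+1}(A_\bullet) \to \cdots
\]
of $R$-modules, with the connecting maps $\delta$ produced by the Snake Lemma applied in each degree (see Weibel~\cite{MR1269324}).

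Granting this long exact sequence, all three cases of the lemma follow immediately. If $A_\bullet$ and $C_\bullet$ are acyclic, then $H^n(A_\bullet) = 0 = H^n(C_\bullet)$ for every $n$, so exactness at $H^n(B_\bullet)$ squeezes $H^n(B_\bullet)$ between two zero modules, forcing $H^n(B_\bullet) = 0$ and hence $B_\bullet$ acyclic. If $A_\bullet$ and $B_\bullet$ are acyclic, then exactness at $H^n(C_\bullet)$, sandwiched between $H^n(B_\bullet) = 0$ and $H^{n+1}(A_\bullet) = 0$, forces $H^n(C_\bullet) = 0$. The third case, with $B_\bullet$ and $C_\bullet$ acyclic, is symmetric, using exactness at $H^n(A_\bullet)$.

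The only real work is therefore the construction of the connecting maps $\delta$ and verification of exactness of the long sequence; this is the standard Snake Lemma diagram chase done degree by degree, and is where I expect the main (though entirely routine) obstacle to lie. If one prefers a self-contained proof avoiding the machinery of the long exact sequence, the alternative is to perform three direct diagram chases on the bicomplex obtained from the short exact sequence, one for each case: starting from a cycle in the complex whose acyclicity we want to establish, one lifts (or pushes forward) along the exact rows, invokes acyclicity of one of the other two complexes to find a preimage of the differential, and then uses commutativity of the squares together with exactness of the rows to manufacture the required boundary. Either approach is purely classical homological algebra, with no set-theoretic content.
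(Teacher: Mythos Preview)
Your argument is correct: the long exact sequence in homology immediately yields all three cases, and your description of the construction via the Snake Lemma is accurate. The paper does not actually supply its own proof of this lemma; it simply cites Weibel and remarks that it is ``proved via straightforward diagram chases,'' so your proposal is entirely consistent with (and more detailed than) what appears there.
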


If $A_\bullet$ is acyclic, subcomplexes of $A_\bullet$ are not necessarily acyclic.  But part \eqref{item_AcyclicReflects} of the following lemma implies that ``almost all" subcomplexes of $A_\bullet$ are acyclic:

\begin{lemma}\label{lem_RestrictComplex}
Suppose $R$ is a ring, $\theta$ is a regular uncountable cardinal, and $\mathfrak{N} \prec H_\theta$ is such that $R$ is both an element and subset of $\mathfrak{N}$.  If $A_\bullet = \big( 
\begin{tikzcd}
A_n \arrow[r, "d_n"] &  A_{n+1} 
\end{tikzcd}
\big)_{n \in \mathbb{Z}}$ is a complex of $R$-modules and $A_\bullet \in \mathfrak{N}$, then:
\begin{enumerate}
 \item The following sequence---which we denote by $A_\bullet \restriction \mathfrak{N}$---is a complex of $R$-modules:
\[
\begin{tikzcd}
\dots \arrow[r] & \mathfrak{N} \cap A_n \arrow[r, "d_n \restriction (\mathfrak{N} \cap A_n)"] &[3em] \mathfrak{N} \cap A_{n+1} \arrow[r] & \dots
\end{tikzcd} 
\]
 \item\label{item_AcyclicReflects}  $A_\bullet$ is acyclic if and only if $A_\bullet \restriction \mathfrak{N}$ is acyclic.

 \item\label{item_InducedSES_acyclic} 
Let $A_\bullet / \mathfrak{N}$ denote the quotient of the inclusion $A_\bullet \restriction \mathfrak{N} \to A_\bullet$, and consider the following short exact sequence of complexes:
 \[
 \xymatrix{
 0_\bullet \ar[r] & A_\bullet \restriction \mathfrak{N} \ar[r] & A_\bullet \ar[r] & A_\bullet / \mathfrak{N} \ar[r] & 0_\bullet
 }
 \]
If $A_\bullet$ is acyclic, then all terms in this short exact sequence are acyclic.
\end{enumerate}
\end{lemma}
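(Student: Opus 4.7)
The plan is to obtain parts (1) and (2) by direct absoluteness/elementarity arguments, and then derive (3) from (2) via the 3-by-3 lemma.

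For part (1), I would first note that since $A_\bullet \in \mathfrak{N}$ and $\mathbb{Z} \subseteq \mathfrak{N}$ (the latter by Fact \ref{fact_BasicFactsElemSub}), each $A_n$ and each $d_n$ is an element of $\mathfrak{N}$, being definable over $(H_\theta,\in)$ from $A_\bullet$ and $n$. The assumption $R \cup \{ R \} \subset \mathfrak{N}$, combined with elementarity, implies that $\mathfrak{N} \cap A_n$ is closed under addition and under scalar multiplication by elements of $R$, hence is a submodule. For any $a \in \mathfrak{N} \cap A_n$, the image $d_n(a)$ is definable from $d_n$ and $a$ (both in $\mathfrak{N}$), so $d_n(a) \in \mathfrak{N} \cap A_{n+1}$. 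Thus $d_n$ restricts to an $R$-module map $\mathfrak{N} \cap A_n \to \mathfrak{N} \cap A_{n+1}$, and the complex identity $d_{n+1} \circ d_n = 0$ is inherited from $A_\bullet$.

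For part (2), I would express acyclicity of $A_\bullet$ as
\[
\varphi(A_\bullet) \ \equiv \ \forall n \in \mathbb{Z}\, \forall a \in A_n\, \left( d_n(a) = 0 \rightarrow \exists b \in A_{n-1}\ d_{n-1}(b) = a \right),
\]
which is $\Delta_0$ in the parameters $A_\bullet$, $\mathbb{Z}$, and $0$. Absoluteness therefore gives $V \models \varphi(A_\bullet)$ iff $(H_\theta,\in) \models \varphi(A_\bullet)$ iff $\mathfrak{N} \models \varphi(A_\bullet)$, using $\Sigma_1$-elementarity of $H_\theta$ in $V$ (Fact \ref{fact_BasicFactsElemSub}) together with $\mathfrak{N} \prec H_\theta$. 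Since $\mathbb{Z} \subseteq \mathfrak{N}$, unpacking $\mathfrak{N} \models \varphi(A_\bullet)$ yields exactly the assertion that the restricted complex $A_\bullet \restriction \mathfrak{N}$ is acyclic. This establishes the biconditional in (2).

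For part (3), the termwise inclusion $A_\bullet \restriction \mathfrak{N} \hookrightarrow A_\bullet$ is injective in each degree, so the cokernel complex $A_\bullet / \mathfrak{N}$ is well defined and fits into the stated short exact sequence of complexes. Assuming $A_\bullet$ is acyclic, part (2) yields acyclicity of $A_\bullet \restriction \mathfrak{N}$, so two of the three complexes in the sequence are acyclic, and Lemma \ref{lem_3_by_3_lemma} then forces the third, $A_\bullet / \mathfrak{N}$, to be acyclic as well.

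The only genuinely conceptual step is the recognition in part (2) that acyclicity, when $\mathbb{Z}$ is regarded as a parameter, is a $\Delta_0$ property that is immediately absorbed by elementarity; once this framing is in place, parts (1) and (3) are essentially bookkeeping. I expect the mild subtlety worth flagging to be the verification that ``$\mathfrak{N} \models \varphi(A_\bullet)$'' really does coincide with acyclicity of the trace complex rather than some weaker internal analogue, which is why it is important to record that $\mathbb{Z}$, $R$, and each $A_n$ sit inside $\mathfrak{N}$ in the appropriate way.
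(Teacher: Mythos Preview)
Your proposal is correct and follows essentially the same approach as the paper: parts (1) and (3) match the paper's argument exactly, and for part (2) the paper gives a direct element-chase (picking a preimage in $\mathfrak{N}$ by elementarity) while noting in a footnote the $\Delta_0$/absoluteness formulation you chose as your main argument. The two treatments of (2) are interchangeable.
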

\begin{proof}
Since $A_\bullet \in \mathfrak{N}$, $d_n \in \mathfrak{N}$ for all $n \in \mathbb{Z}$ by elementarity of $\mathfrak{N}$.  Hence, $d_n \restriction \mathfrak{N} \cap A_n$ maps into $\mathfrak{N} \cap A_{n+1}$.  Clearly this is a complex (consecutive maps compose to 0) because $A_\bullet$ has this property.  

If $A_\bullet$ happens to be acyclic, this is also inherited by $A_\bullet \restriction \mathfrak{N}$, since if $x \in \mathfrak{N} \cap \text{ker} d_n$, then by exactness of $A_\bullet$ at $n$ and elementarity of $\mathfrak{N}$, there is some $z \in \mathfrak{N} \cap A_{n-1}$ such that $d_{n-1}(z)=x$ (note this is a key use of elementarity; subcomplexes of acyclic complexes are \emph{not} always acyclic!).  Similarly, if $A_\bullet$ is not acyclic, then there is some $n \in \mathbb{Z}$ such that $\text{ker}  \ d_n \ne \text{im} \  d_{n-1}$; since $\mathbb{Z} \subset \mathfrak{N}$ and $A_\bullet \in \mathfrak{N}$, the witness to the non-equality can be taken in $\mathfrak{N}$.\footnote{Alternatively, one can observe that ``$A_\bullet$ is acyclic in $\text{Ch}(R)$" can be expressed in a $\Sigma_0$ manner, and use that $A_\bullet \restriction \mathfrak{N}$ is isomorphic, in $\text{Ch}(R)$, to its image under the transitive collapsing map of $\mathfrak{N}$.}  This proves part \eqref{item_AcyclicReflects}.  Part \eqref{item_InducedSES_acyclic} follows from part \eqref{item_AcyclicReflects} and the 3-by-3 lemma.
\end{proof}

The notion of deconstructibility makes sense for classes of complexes too, with the obvious modifications (and even for Grothendieck categories and beyond, e.g., \v{S}t'ov\'{\i}\v{c}ek~\cite{MR3010854}).  When appropriately translated, Theorem \ref{thm_DeconChar} generalizes to abelian categories that are locally finitely presentable (and a bit beyond).  But for concreteness, we stick to a version for complexes:

\begin{theorem}[version of Theorem \ref{thm_DeconChar} for classes in $\text{Ch}(R)$]\label{thm_DeconCharComplexes}
Suppose $R$ is a ring and $\kappa$ is a regular uncountable cardinal with $\kappa > |R|$.  Suppose $\mathcal{C}$ is a class of complexes in $\text{Ch}(R)$ that is closed under filtrations.  Then all equivalences of Theorem \ref{thm_DeconChar} hold, with each ``$\mathfrak{N} \cap C$" replaced by ``$C_\bullet \restriction \mathfrak{N}$" (for $C_\bullet \in \mathfrak{N} \cap \mathcal{C}$).
\end{theorem}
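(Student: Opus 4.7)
The plan is to mimic the proof of Theorem~\ref{thm_DeconChar} from Appendix~\ref{app_ProofDeconChar}, with the operation $C_\bullet \mapsto C_\bullet \restriction \mathfrak{N}$ on complexes playing the role that $A \mapsto \mathfrak{N} \cap A$ plays for modules. Lemma~\ref{lem_RestrictComplex} already supplies the key formal properties of this operation: it preserves the subcomplex structure and interacts well with short exact sequences via the 3-by-3 lemma. Before starting, I would establish the complex-analogue of Lemma~\ref{lem_IsoQuotient}: if $A_\bullet$ is a subcomplex of $B_\bullet$ and $\{A_\bullet, B_\bullet, R\} \cup R \subset \mathfrak{N} \prec (H_\theta,\in)$, then $(B_\bullet / A_\bullet) \restriction \mathfrak{N} \simeq (B_\bullet \restriction \mathfrak{N}) / (A_\bullet \restriction \mathfrak{N})$ as complexes. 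The degreewise statement is exactly Lemma~\ref{lem_IsoQuotient}; elementarity of $\mathfrak{N}$ guarantees the degreewise isomorphisms assemble into a chain isomorphism, since the connecting maps are themselves elements of $\mathfrak{N}$. I would also fix the relevant notion of ``$<\kappa$-presented complex'' to mean a complex $C_\bullet$ with $\sum_{n \in \mathbb{Z}} |C_n| < \kappa$, so that any $C_\bullet \restriction \mathfrak{N}$ with $|\mathfrak{N}| < \kappa$ is automatically $<\kappa$-presented.

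For the implication \ref{item_KappaDecon}$\Rightarrow$\ref{item_MainChar} I would run the Hill-lemma style argument. Given $C_\bullet \in \mathfrak{N} \cap \mathcal{C}$, elementarity and clause~\ref{item_KappaDecon} produce a $\mathcal{C}_\kappa$-filtration $\vec{M} = \langle M^\bullet_\alpha : \alpha < \zeta \rangle$ of $C_\bullet$ that lies in $\mathfrak{N}$. Enumerating $(\mathfrak{N} \cap \zeta) \cup \{\zeta\}$ in increasing order as $\langle \alpha_i : i \le \eta \rangle$, the sequence $\langle M^\bullet_{\alpha_i} \restriction \mathfrak{N} : i \le \eta \rangle$ is a continuous filtration of $C_\bullet \restriction \mathfrak{N}$ whose consecutive factors are $(M^\bullet_{\alpha_{i+1}} / M^\bullet_{\alpha_i}) \restriction \mathfrak{N}$, by the complex-analogue of Lemma~\ref{lem_IsoQuotient}. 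Each of these ``long jumps'' is itself $\mathcal{C}_\kappa$-filtered inside $\mathfrak{N}$ using closure of $\mathcal{C}$ under filtrations, and its restriction to $\mathfrak{N}$ inherits a $\mathcal{C}_\kappa$-filtration by the same token. A parallel argument, applied to the quotient complex $C_\bullet / (C_\bullet \restriction \mathfrak{N})$ whose filtration is read off from the complement indices $\zeta \setminus \mathfrak{N}$, shows that the quotient is also $\mathcal{C}_\kappa$-filtered.

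For the converse \ref{item_MainChar}$\Rightarrow$\ref{item_KappaDecon}, I would fix $C_\bullet \in \mathcal{C}$ and build a continuous $\subseteq$-increasing chain $\langle \mathfrak{N}_\alpha : \alpha \le \zeta \rangle$ of elementary submodels of $(H_\theta, \in, R, \kappa, \mathcal{C} \cap H_\theta)$, each of size $<\kappa$ with transitive intersection with $\kappa$, such that $C_\bullet \in \mathfrak{N}_0$ and every element of every $C_n$ eventually enters some $\mathfrak{N}_\alpha$. Then $\langle C_\bullet \restriction \mathfrak{N}_\alpha : \alpha \le \zeta \rangle$ is a continuous filtration of $C_\bullet$, each consecutive quotient $(C_\bullet \restriction \mathfrak{N}_{\alpha+1}) / (C_\bullet \restriction \mathfrak{N}_\alpha)$ belongs to $\mathcal{C}$ by clause~\ref{item_DiagVersion2models} applied to $\mathfrak{N}_\alpha \prec \mathfrak{N}_{\alpha+1}$, and is $<\kappa$-presented by the size bound on $\mathfrak{N}_{\alpha+1}$. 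The equivalences among \ref{item_MainChar}, \ref{item_DiagVersion2models}, and \ref{item_non_diag} are formal and transfer directly from the module case by expanding the first-order signature to witness the two inner models or to Skolemize. The main obstacle I anticipate is making the complex-analogue of Lemma~\ref{lem_IsoQuotient} precise enough to be applied uniformly, both to identify filtration quotients in the forward direction and to pass to quotient filtrations in the Hill-lemma argument; once that is in hand, the remainder is a degreewise reuse of the module-category proof from Appendix~\ref{app_ProofDeconChar}.
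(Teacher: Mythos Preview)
The paper does not give a separate proof of Theorem~\ref{thm_DeconCharComplexes}; it is stated as a routine translation of Theorem~\ref{thm_DeconChar}, whose proof is in Appendix~\ref{app_ProofDeconChar}. Your plan to carry out exactly that translation, using Lemma~\ref{lem_RestrictComplex} and a complex-analogue of Lemma~\ref{lem_IsoQuotient}, is the intended approach and is essentially correct.

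Two points where your sketch diverges from the appendix deserve comment. First, in your \ref{item_MainChar}$\Rightarrow$\ref{item_KappaDecon} argument you invoke clause~\ref{item_DiagVersion2models} (for the pair $\mathfrak{N}_\alpha \prec \mathfrak{N}_{\alpha+1}$) and then assert that \ref{item_MainChar}$\Leftrightarrow$\ref{item_DiagVersion2models} is ``formal''. It is not: \ref{item_DiagVersion2models}$\Rightarrow$\ref{item_MainChar} is trivial (take $\mathfrak{N}' = H_\theta$), but the converse for an arbitrary pair $\mathfrak{N} \prec \mathfrak{N}'$ is not immediate. The appendix avoids this by arranging $\mathfrak{N}_i \in \mathfrak{N}_{i+1}$, so that the quotient $C/(\mathfrak{N}_i \cap C)$ is an \emph{element} of $\mathfrak{N}_{i+1} \cap \mathcal{C}$, and then applying \ref{item_MainChar} to $\mathfrak{N}_{i+1}$ with that quotient; the complex-analogue of Lemma~\ref{lem_IsoQuotient} identifies the result with the consecutive factor. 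You should do the same. Second, the appendix proceeds by induction on $|C|$ using $<|C|$-sized submodels, whereas you build a single long chain of $<\kappa$-sized submodels. Your variant works and is arguably cleaner (it yields a $\mathcal{C}_\kappa$-filtration in one pass), but it still requires the $\mathfrak{N}_\alpha \in \mathfrak{N}_{\alpha+1}$ trick just mentioned. Also, in your \ref{item_KappaDecon}$\Rightarrow$\ref{item_MainChar} sketch, note that by elementarity consecutive elements of $\mathfrak{N} \cap \zeta$ are always of the form $i, i+1$, so there are no genuine ``long jumps''; the factors are exactly the $<\kappa$-sized $M^\bullet_{i+1}/M^\bullet_i$, which are subsets of $\mathfrak{N}$ since $\mathfrak{N} \cap \kappa$ is transitive.
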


One could weaken the assumption $\kappa > |R|$ to ``$R$ is $\kappa$-Noetherian", by making use of Theorem \ref{thm_CharNoeth}; see \cite{Cox_MaxDecon} for details.

\section{More applications, old and new}\label{sec_MoreApps}

\subsection{Resolutions by deconstructible classes}\label{sec_Resolutions}

Homological algebra uses \emph{projective resolutions} to approximate non-projective modules.  More generally, given an $R$-module $M$ and a class $\mathcal{F}$ of $R$-modules, an \textbf{$\boldsymbol{\mathcal{F}}$-resolution of $\boldsymbol{M}$} is a long exact sequence of the form
\[
\begin{tikzcd}
\dots \arrow[r] & F_n \arrow[r,"\pi_n"] & F_{n-1} \arrow[r]  & \dots \arrow[r] & F_1 \arrow[r,"\pi_1"] & F_0 \arrow[r,"\pi_0"] & M \arrow[r] & 0
\end{tikzcd}
\]
where each $F_i$ is either 0 or an element of $\mathcal{F}$.  A module $M$ has \textbf{$\boldsymbol{\mathcal{F}}$-dimension at most $\boldsymbol{n}$} if it has an $\mathcal{F}$-resolution such that $F_i = 0$ for all $i \ge n$.  $M$ has \textbf{finite $\mathcal{F}$-dimension} if it has $\mathcal{F}$-dimension at most $n$, for some natural number $n$.

Kaplansky's Theorem \ref{thm_Kaplansky} implies that the class of modules with projective dimension 0---i.e., the projective modules---is deconstructible.  Theorem \ref{thm_FlatCover} (proof of the Flat Cover Conjecture) says the same is true for the class of flat modules.  These facts were generalized by Aldrich et al.~\cite{MR1848954} to the classes of modules of projective or flat dimension at most $n$, and this was further strengthened by Sl\'{a}vik-Trlifaj~\cite{MR3161764}, restated as Theorem \ref{thm_SlavikTrlifaj} below.  It says that if $\langle \mathcal{C}_n \ : \ n < \omega \rangle$ is a sequence of $\kappa$-deconstructible classes of modules, then the class of modules possessing some ``$\vec{\mathcal{C}}$-resolution"---i.e., those modules $M$ for which there exists an exact sequence
\[
\begin{tikzcd}
\dots C_2 \arrow[r] & C_1 \arrow[r] & C_0 \arrow[r] & M \arrow[r] & 0
\end{tikzcd}
\]
with each $C_n \in \mathcal{C}_n$---is $\kappa$-deconstructible.\footnote{This implies the results of Aldrich et al.~\cite{MR1848954} by taking $\mathcal{C}_i:=$ either the class of projectives or the class of flats for each $i \le n$, and $\mathcal{C}_i = \{ 0 \}$ for $i > n$.}  More precisely:
\begin{theorem}[Sl\'{a}vik-Trlifaj~\cite{MR3161764}]\label{thm_SlavikTrlifaj}
Suppose $R$ is a ring where each ideal in $R$ is at most $\kappa$-generated, and $\langle S_n \ : \ n < \omega \rangle$ is a sequence of sets of $\kappa$-presented $R$-modules.  Set $\mathcal{C}_n:= \text{Filt}(S_n)$ for each $n \in \omega$, and let $\mathcal{D}$ denote the class of modules that possess a $\vec{\mathcal{C}}$-resolution.  Then $\mathcal{D}$ is $\kappa$-deconstructible.
\end{theorem}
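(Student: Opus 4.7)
The plan is to verify clause \ref{item_MainChar} of Theorem \ref{thm_DeconChar} for the class $\mathcal{D}$, by regarding a $\vec{\mathcal{C}}$-resolution as an acyclic complex in $\mathrm{Ch}(R)$ and then restricting/quotienting it by an elementary submodel via Lemma \ref{lem_RestrictComplex}. Since each $\mathcal{C}_n = \mathrm{Filt}(S_n)$ is by construction $\kappa$-deconstructible, the strategy is to transfer the \ref{item_MainChar}-style reflection from the $\mathcal{C}_n$'s to $\mathcal{D}$, in the spirit of the set-indexed intersection argument of Section \ref{sec_SetSizedIntersect}.

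I first fix a sufficiently large regular $\theta$ and an elementary submodel
\[
\mathfrak{N} \prec \mathfrak{A} := \bigl(H_\theta, \in, R, \kappa, \langle \mathcal{C}_n \cap H_\theta : n < \omega \rangle \bigr)
\]
with $\mathfrak{N} \cap \kappa$ transitive. Since $\omega \subseteq \mathfrak{N}$ and the whole indexing sequence is an element of $\mathfrak{A}$, each predicate $\mathcal{C}_n \cap H_\theta$ is an element of $\mathfrak{N}$; then the \ref{item_KappaDecon}$\Rightarrow$\ref{item_MainChar} direction of Theorem \ref{thm_DeconChar}, applied simultaneously to every $\mathcal{C}_n$, yields the key uniform reflection: whenever $X \in \mathfrak{N} \cap \mathcal{C}_n$, both $\mathfrak{N} \cap X$ and $X/(\mathfrak{N} \cap X)$ lie in $\mathcal{C}_n$.

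Next I take any $M \in \mathfrak{N} \cap \mathcal{D}$. The statement ``$M$ possesses a $\vec{\mathcal{C}}$-resolution'' is $\Sigma_1$ in the parameters packaged into $\mathfrak{A}$, so elementarity gives a $\vec{\mathcal{C}}$-resolution $C_\bullet \in \mathfrak{N}$ of $M$. Augmenting by placing $M$ in degree $-1$ produces an acyclic complex
\[
\cdots \to C_2 \to C_1 \to C_0 \to M \to 0 \to \cdots
\]
that is definable over $\mathfrak{A}$ from parameters in $\mathfrak{N}$. Applying Lemma \ref{lem_RestrictComplex}\eqref{item_AcyclicReflects},\eqref{item_InducedSES_acyclic}, both the restriction $C_\bullet \restriction \mathfrak{N}$ and the quotient $C_\bullet / \mathfrak{N}$ are acyclic, giving exact sequences
\[
\cdots \to \mathfrak{N} \cap C_1 \to \mathfrak{N} \cap C_0 \to \mathfrak{N} \cap M \to 0
\]
and
\[
\cdots \to \tfrac{C_1}{\mathfrak{N} \cap C_1} \to \tfrac{C_0}{\mathfrak{N} \cap C_0} \to \tfrac{M}{\mathfrak{N} \cap M} \to 0.
\]
Each $C_n$ lies in $\mathfrak{N} \cap \mathcal{C}_n$, so by the previous paragraph $\mathfrak{N} \cap C_n$ and $C_n/(\mathfrak{N} \cap C_n)$ land in $\mathcal{C}_n$. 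Hence the two displayed sequences are genuine $\vec{\mathcal{C}}$-resolutions, proving $\mathfrak{N} \cap M \in \mathcal{D}$ and $M/(\mathfrak{N} \cap M) \in \mathcal{D}$, which is exactly clause \ref{item_MainChar} for $\mathcal{D}$.

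The main obstacle is the preliminary step of verifying that $\mathcal{D}$ is closed under transfinite extensions, without which the implication \ref{item_MainChar}$\Rightarrow$\ref{item_KappaDecon} of Theorem \ref{thm_DeconChar} produces only \emph{weak} $\kappa$-deconstructibility (Remark \ref{rem_DontNeedFiltClosure}). This is a purely homological matter with no set-theoretic input: given a filtration $M = \bigcup_\alpha M_\alpha$ with each $M_{\alpha+1}/M_\alpha \in \mathcal{D}$, one iteratively applies a horseshoe-style construction, using that each $\mathcal{C}_n = \mathrm{Filt}(S_n)$ is closed under direct sums and transfinite extensions, to splice the resolutions of the successive quotients together into a single $\vec{\mathcal{C}}$-resolution of $M$. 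Once filtration closure of $\mathcal{D}$ is in hand, the elementary submodel argument above together with Theorem \ref{thm_DeconChar} (and, if $|R|\ge\kappa$, the $\kappa$-Noetherian variant noted after Theorem \ref{thm_DeconCharComplexes}) delivers $\kappa$-deconstructibility of $\mathcal{D}$.
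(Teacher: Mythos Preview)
Your proof is correct and follows essentially the same strategy as the paper's: view a $\vec{\mathcal{C}}$-resolution of the given module as an acyclic complex, restrict it to $\mathfrak{N}$ via Lemma~\ref{lem_RestrictComplex}, and then apply the \ref{item_KappaDecon}$\Rightarrow$\ref{item_MainChar} direction of Theorem~\ref{thm_DeconChar} to each $\mathcal{C}_n$ to conclude that the restricted and quotient complexes are again $\vec{\mathcal{C}}$-resolutions. The only difference is that the paper verifies clause~\ref{item_non_diag} rather than~\ref{item_MainChar}---fixing $D \in \mathcal{D}$ and a witnessing resolution $X_\bullet$ first, then requiring $X_\bullet \in \mathfrak{N}$---whereas you locate the resolution inside $\mathfrak{N}$ by elementarity using your (correct) $\Sigma_1$ observation; both routes work, and the paper likewise only gestures at filtration closure of $\mathcal{D}$, pointing instead to the precovering property of each $\mathcal{C}_n$ (which your horseshoe sketch would need to make rigorous).
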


We use Theorem \ref{thm_DeconChar} to provide a new, short proof of Theorem \ref{thm_SlavikTrlifaj} (but under the simplifying assumption $|R|<\kappa$; the full version of Theorem \ref{thm_SlavikTrlifaj} can be proved using the version of Theorem \ref{thm_DeconChar} from Cox~\cite{Cox_MaxDecon}).

\begin{proof}
First we check weak $\kappa$-deconstructibility.  Suppose $D \in \mathcal{D}$, as witnessed by some $\vec{\mathcal{C}}$-resolution
\[
\begin{tikzcd}
\dots \arrow[r] C_2 \arrow[r] & C_1 \arrow[r] & C_0 \arrow[r] & D \arrow[r] & 0 
\end{tikzcd}
\]
This can be viewed as a (truncated) acyclic complex, which we will denote by $X_\bullet$.

Fix a regular $\theta$ with all these parameters in $H_\theta$, and consider any 
\[
\mathfrak{N} \prec \mathfrak{A}:=(H_\theta,\in,R,\kappa, \langle S_n \ : \ n < \omega \rangle)
\]
such that $X_\bullet \in \mathfrak{N}$ and $\mathfrak{N} \cap \kappa$ is transitive.  By the \ref{item_non_diag} $\implies$ \ref{item_KappaDecon} direction of Theorem \ref{thm_DeconChar}, it suffices to prove that both $\mathfrak{N} \cap D$ and $\frac{D}{\mathfrak{N} \cap D}$ have $\vec{\mathcal{C}}$-resolutions.  By Lemma \ref{lem_RestrictComplex}, since $X_\bullet$ is acyclic, then
\[
\begin{tikzcd}
0_\bullet \arrow[r] & X_\bullet \restriction \mathfrak{N} \arrow[r] & X_\bullet \arrow[r] & X_\bullet / \mathfrak{N} \arrow[r] & 0_\bullet
\end{tikzcd}
\]
is a short exact sequence of acyclic complexes, i.e., the following has all exact rows and columns:
\begin{equation}\label{eq_GridOfResolutions}
\begin{tikzcd}
0_\bullet: \arrow[d] & \dots & 0 \arrow[d] \arrow[r] & 0 \arrow[d] \arrow[r] & 0 \arrow[r] \arrow[d] & 0 \arrow[d] \\
X_\bullet \restriction \mathfrak{N} : \arrow[d] & \dots &\mathfrak{N} \cap  C_1 \arrow[d] \arrow[r] & \mathfrak{N} \cap  C_0 \arrow[d] \arrow[r] & \mathfrak{N} \cap  D \arrow[d] \arrow[r] & 0 \arrow[d] \\
X_\bullet: \arrow[d] & \dots & C_1 \arrow[r] \arrow[d] & C_0 \arrow[r] \arrow[d] & D \arrow[r] \arrow[d] & 0 \arrow[d]\\
X_\bullet/\mathfrak{N}: \arrow[d] & \dots & \frac{C_1}{\mathfrak{N} \cap C_1} \arrow[r] \arrow[d] & \frac{C_0}{\mathfrak{N} \cap C_0} \arrow[r] \arrow[d] & \frac{D}{\mathfrak{N} \cap D} \arrow[r] \arrow[d] & 0 \arrow[d] \\ 
0_\bullet:  & \dots & 0  \arrow[r] & 0  \arrow[r] & 0 \arrow[r]  & 0 
\end{tikzcd}
\end{equation}
Since $C_n \in \mathcal{C}_n \cap \mathfrak{N}$ for each $n \in \omega$ and $\mathcal{C}_n$ was $\kappa$-deconstructible, the \ref{item_KappaDecon} $\implies$ \ref{item_MainChar} direction of Theorem \ref{thm_DeconChar} tells us that
\[
\forall n \in \omega \ \ \mathfrak{N} \cap C_n \in \mathcal{C}_n \text{ and } \frac{C_n}{\mathfrak{N} \cap C_n} \in \mathcal{C}_n. 
\] 
So the row of \eqref{eq_GridOfResolutions} corresponding to $X_\bullet \restriction \mathfrak{N}$ is a $\vec{\mathcal{C}}$-resolution of $\mathfrak{N} \cap D$, and the row corresponding to $X_\bullet / \mathfrak{N}$ is a $\vec{\mathcal{C}}$-resolution of $\frac{D}{\mathfrak{N} \cap D}$.

This completes the proof that $\mathcal{D}$ is weakly $\kappa$-deconstructible. We omit the proof that $\mathcal{D}$ is filtration-closed; this is a purely algebraic proof, using the fact that each $\mathcal{C}_n$ is a precovering class (by Theorem \ref{thm_DeconPrecover}).

\end{proof}

\subsection{Almost everywhere closure under quotients}\label{sec_AE_closure}

In \cite{Cox_MaxDecon} we isolated the following property, after observing that classes of complexes associated with \emph{Gorenstein Homological Algebra} typically possess it.  It is a very weak version of closure of a class under quotients: 
\begin{definition}[\cite{Cox_MaxDecon}]
Fix a ring $R$ and suppose $\mathcal{C} \subseteq R$-Mod.  We say $\mathcal{C}$ is \textbf{eventually a.e.\ closed under quotients} if for all sufficiently large regular $\kappa$, the following holds:  whenever
\[
\mathfrak{N} \prec \mathfrak{N}' \prec (H_\theta,\in,R,\mathcal{C} \cap H_\theta)
\]
and both $\mathfrak{N} \cap \kappa$ and $\mathfrak{N}' \cap \kappa$ are transitive, then for every $C \in \mathfrak{N}$ the following implication holds:
\begin{equation*}
\left( C \in \mathcal{C} \text{ and }  \mathfrak{N} \cap C \in \mathcal{C} \text{ and } \mathfrak{N}' \cap C \in \mathcal{C} \right) \  \implies \ \frac{\mathfrak{N}' \cap C}{\mathfrak{N} \cap C} \in \mathcal{C}.
\end{equation*}

We also define this notion for classes $\mathcal{C} \subseteq \text{Ch}(R)$ of complexes, where for $C_\bullet \in \mathfrak{N}$ the implication above is replaced by:
\begin{equation*}
\left( C_\bullet \in \mathcal{C} \text{ and }  C_\bullet \restriction \mathfrak{N} \in \mathcal{C} \text{ and } C_\bullet \restriction \mathfrak{N}'  \in \mathcal{C} \right) \  \implies \ \frac{C_\bullet \restriction \mathfrak{N}'}{C_\bullet \restriction \mathfrak{N}} \in \mathcal{C}.
\end{equation*}

\end{definition}

\begin{lemma}\label{lem_DeconImpliesClosure}
If $\mathcal{C}$ is a deconstructible class of modules or complexes, then it is eventually a.e.\ closed under quotients.
\end{lemma}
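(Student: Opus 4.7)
The plan is to invoke clause \ref{item_DiagVersion2models} of Theorem \ref{thm_DeconChar} (the two-model diagonal characterization of $\kappa_0$-deconstructibility), whose conclusion is already essentially the statement we need. Observe that clause \ref{item_DiagVersion2models} is actually \emph{stronger} than what we must show: it derives $\frac{\mathfrak{N}' \cap C}{\mathfrak{N} \cap C} \in \mathcal{C}$ from the single hypothesis $C \in \mathfrak{N} \cap \mathcal{C}$, without requiring the extra conditions $\mathfrak{N} \cap C \in \mathcal{C}$ and $\mathfrak{N}' \cap C \in \mathcal{C}$ built into the definition of ``eventually a.e.\ closed under quotients''.

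Since $\mathcal{C}$ is deconstructible, I would first fix a regular uncountable $\kappa_0 > |R|$ such that $\mathcal{C}$ is $\kappa_0$-deconstructible, taking $\kappa_0$ to be the least such cardinal so that it is uniquely determined by $R$ and the predicate $\mathcal{C} \cap H_\theta$. Next, for the definition of ``eventually a.e.\ closed under quotients'', choose any regular $\kappa \ge \kappa_0$. Given $\mathfrak{N} \prec \mathfrak{N}' \prec (H_\theta,\in,R,\mathcal{C} \cap H_\theta)$ with $\mathfrak{N} \cap \kappa$ and $\mathfrak{N}' \cap \kappa$ transitive, and a $C \in \mathfrak{N}$ with $C \in \mathcal{C}$, the definability of $\kappa_0$ gives $\kappa_0 \in \mathfrak{N} \cap \mathfrak{N}'$, so automatically $\mathfrak{N} \prec \mathfrak{N}' \prec (H_\theta,\in,R,\kappa_0,\mathcal{C} \cap H_\theta)$. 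A routine check shows $\mathfrak{N} \cap \kappa_0 = (\mathfrak{N} \cap \kappa) \cap \kappa_0$ is an initial segment of $\kappa_0$ (equal to either $\mathfrak{N} \cap \kappa$ or $\kappa_0$), so it is transitive; similarly for $\mathfrak{N}'$. Now clause \ref{item_DiagVersion2models} of Theorem \ref{thm_DeconChar} applies with $\kappa_0$ in the role of $\kappa$ and delivers $\frac{\mathfrak{N}' \cap C}{\mathfrak{N} \cap C} \in \mathcal{C}$.

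The case of classes of complexes is handled by the identical argument, using Theorem \ref{thm_DeconCharComplexes} in place of Theorem \ref{thm_DeconChar} and replacing each trace $\mathfrak{N} \cap C$ by $C_\bullet \restriction \mathfrak{N}$ throughout (and similarly for $\mathfrak{N}'$), with the quotient $\frac{C_\bullet \restriction \mathfrak{N}'}{C_\bullet \restriction \mathfrak{N}}$ in the conclusion.

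The only technical point of any substance is the signature bookkeeping: clause \ref{item_DiagVersion2models} requires elementarity in a structure explicitly containing $\kappa_0$ as a constant, while the definition of ``eventually a.e.\ closed under quotients'' only provides elementarity in a structure without $\kappa_0$. I expect this to be a non-issue in practice, handled by choosing $\kappa_0$ minimal so that it is definable from the available predicate $\mathcal{C} \cap H_\theta$, which forces $\kappa_0$ into any relevant elementary submodel. With that bookkeeping in place, the lemma becomes a direct repackaging of the two-model diagonal characterization of deconstructibility.
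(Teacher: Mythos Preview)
Your proof is correct and takes essentially the same approach as the paper: both invoke the \ref{item_KappaDecon} $\Rightarrow$ \ref{item_DiagVersion2models} direction of Theorem \ref{thm_DeconChar} (and Theorem \ref{thm_DeconCharComplexes} for complexes). The paper's version is marginally simpler: rather than fixing the \emph{minimal} $\kappa_0$ and arguing it lies in $\mathfrak{N}$ by definability, the paper just observes that $\kappa$-deconstructibility is upward-closed in $\kappa$---so $\mathcal{C}$ is $\kappa$-deconstructible for every regular $\kappa \ge \kappa_0$---and then applies clause \ref{item_DiagVersion2models} directly with the ambient $\kappa$, leaving the signature bookkeeping you worry about implicit. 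Your definability maneuver is a perfectly reasonable way to make that bookkeeping explicit.
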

\begin{proof}
Suppose $\mathcal{C}$ is a $\kappa_0$-deconstructible class of modules; then it is $\kappa$-deconstructible for all regular $\kappa \ge \kappa_0$.  Then for any $C \in \mathfrak{N} \prec \mathfrak{N}'$ as in the statement of the lemma (with $\mathfrak{N} \cap \kappa$ and $\mathfrak{N}' \cap \kappa$ both transitive), the  \ref{item_KappaDecon} $\implies$ \ref{item_DiagVersion2models} direction of Theorem \ref{thm_DeconChar} immediately yields that $\frac{\mathfrak{N}' \cap C}{\mathfrak{N} \cap C} \in \mathcal{C}$. 

If $\mathcal{C}$ is a class of complexes, the argument is the same, except one uses Theorem \ref{thm_DeconCharComplexes} instead of Theorem \ref{thm_DeconChar}.
\end{proof}

If $\mathcal{C}$ is eventually a.e.\ closed under quotients, and you're trying to prove that it is deconstructible, then the task of trying to verify clause \ref{item_MainChar} (or clause \ref{item_DiagVersion2models}) in the characterization of deconstructibility (Theorem \ref{thm_DeconChar}) becomes easier, because one only has to verify the ``$\mathfrak{N} \cap C \in \mathcal{C}$" part of that clause; the ``$\frac{C}{\mathfrak{N} \cap C} \in \mathcal{C}$" part is then taken care of by eventual a.e.\ closure of $\mathcal{C}$ under quotients, by taking $\mathfrak{N}':= H_\theta$.  

But in fact, eventual a.e.\ closure under quotients makes the task even easier for us: it allows us to turn a stationary set of reflections to a closed unbounded one (assuming filtration-closure of the class), and then to extend further to larger elementary submodels.  This is extremely useful, because as we will see (for example) that one can often show quickly that for sufficiently ``closed" $\mathfrak{N} \prec (H_\theta,\in)$, $\mathfrak{N}$ reflects membership in a class (i.e., that $C \cap \mathfrak{N} \in \mathcal{C}$ whenever $C \in \mathfrak{N} \cap \mathcal{C}$).

\begin{theorem}[essentially Theorem 6.3 of \cite{Cox_MaxDecon}]\label{thm_TurnStatToClub}
Suppose $\mathcal{C}$ is a class of $R$-modules, $\kappa$ is a regular uncountable cardinal larger than $|R|$, and:
\begin{enumerate}[label=(\alph*)]
 \item\label{item_FiltClosure} $\mathcal{C}$ is filtration-closed;
 \item\label{item_Assump_AEclosed} $\mathcal{C}$ is $\kappa$-a.e.\ closed under quotients; and
 \item\label{item_StatReflect} For every $C \in \mathcal{C}$, the collection
\[
\mathcal{C} \cap \wp_\kappa(C)
\]
is stationary in $\wp_\kappa(C)$.    

\end{enumerate}
Then $\mathcal{C}$ is $\kappa$-deconstructible.

\textbf{Note:}  if $\mathcal{C}$ is a class of complexes instead of modules, the same theorem holds, with clause \ref{item_StatReflect} replaced by the assumption:  For every $C_\bullet \in \mathcal{C}$, the collection\footnote{If $C_\bullet$ is a complex, the literal collection $\wp_\kappa(C_\bullet)$ would not be very useful, since it would not even be a collection of complexes.}
\[
\big\{ \mathfrak{N} \in \wp_\kappa(H_\theta) \ : \ C_\bullet \in \mathfrak{N} \text{ and } C_\bullet \restriction \mathfrak{N} \in \mathcal{C}  \big\}
\]
is stationary in $\wp_\kappa(H_\theta)$ for all sufficiently large $\theta$.
\end{theorem}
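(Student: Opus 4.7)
I will verify clause \ref{item_MainChar} of Theorem \ref{thm_DeconChar} (and the parallel clause of Theorem \ref{thm_DeconCharComplexes} in the complex case), which by that theorem yields $\kappa$-deconstructibility.  Choose a regular cardinal $\theta$ large enough that (i) the $\kappa$-a.e.\ closure under quotients holds for elementary substructures of $(H_\theta,\in,R,\mathcal{C} \cap H_\theta)$, and (ii) every club in $\wp_\kappa(C)$ for $C \in H_\theta \cap \mathcal{C}$ is a member of $H_\theta$, so that stationarity of $\mathcal{C} \cap \wp_\kappa(C)$ in $\wp_\kappa(C)$ is first-order over $H_\theta$.  Let $\mathfrak{A} := (H_\theta,\in,R,\kappa,\mathcal{C} \cap H_\theta)$ be Skolemized, and fix any $\mathfrak{N} \prec \mathfrak{A}$ with $\mathfrak{N} \cap \kappa$ transitive and any $C \in \mathfrak{N} \cap \mathcal{C}$.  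Once we have $\mathfrak{N} \cap C \in \mathcal{C}$, the quotient $\frac{C}{\mathfrak{N} \cap C} \in \mathcal{C}$ falls out at once by applying the $\kappa$-a.e.\ closure to the pair $\mathfrak{N} \prec H_\theta$, since trivially $H_\theta \cap \kappa = \kappa$ is transitive and $H_\theta \cap C = C \in \mathcal{C}$.

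To show $\mathfrak{N} \cap C \in \mathcal{C}$, I will exhibit it as the union of a $\mathcal{C}$-filtration.  A standard projection argument (push-down through $\mathfrak{M} \mapsto \mathfrak{M} \cap C$, which sends clubs in $\wp_\kappa(H_\theta)$ to clubs in $\wp_\kappa(C)$) lifts the hypothesis to stationarity of
\[
\mathcal{S} := \{\mathfrak{M} \in \wp^*_\kappa(H_\theta) \ : \ \mathfrak{M} \prec \mathfrak{A} \text{ and } \mathfrak{M} \cap C \in \mathcal{C}\}
\]
inside $\wp_\kappa(H_\theta)$, and by elementarity $\mathfrak{N}$ sees this.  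Enumerating $\mathfrak{N} = \{x_\xi : \xi < \lambda\}$ with $\lambda := \max(|\mathfrak{N}|,\aleph_0)$, I will inductively build a continuous $\subseteq$-increasing chain $\langle \mathfrak{M}_\xi : \xi \le \lambda \rangle$ of elementary substructures of $\mathfrak{A}$ with each $\mathfrak{M}_\xi \subseteq \mathfrak{N}$, $\mathfrak{M}_\xi \cap \kappa$ transitive, $\mathfrak{M}_\xi \cap C \in \mathcal{C}$, and $\bigcup_\xi \mathfrak{M}_\xi = \mathfrak{N}$.  At a successor stage I use $\mathfrak{N}$'s internal view of $\mathcal{S}$ to pick $\mathfrak{M}^* \in \mathcal{S} \cap \mathfrak{N}$ with $x_\xi \in \mathfrak{M}^*$; by Fact \ref{fact_BasicFactsElemSub} the small model $\mathfrak{M}^*$ sits inside $\mathfrak{N}$, and I take $\mathfrak{M}_{\xi+1} := \textrm{Sk}^{\mathfrak{A}}(\mathfrak{M}_\xi \cup \mathfrak{M}^*)$.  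At limit stages I take unions: elementarity passes through by Tarski-Vaught, and the invariant $\mathfrak{M}_\gamma \cap C \in \mathcal{C}$ persists by filtration-closure applied to the partial chain $\langle \mathfrak{M}_\eta \cap C : \eta < \gamma\rangle$, whose successor quotients lie in $\mathcal{C}$ by $\kappa$-a.e.\ closure (invoked inductively).  When the construction terminates, $\kappa$-a.e.\ closure delivers every successor quotient $(\mathfrak{M}_{\xi+1} \cap C)/(\mathfrak{M}_\xi \cap C) \in \mathcal{C}$, so the chain prepended with $0$ is a $\mathcal{C}$-filtration of $\mathfrak{N} \cap C$ and filtration-closure yields $\mathfrak{N} \cap C \in \mathcal{C}$.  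The complex case follows the same template, replacing $\mathfrak{M}_\xi \cap C$ by $C_\bullet \restriction \mathfrak{M}_\xi$, invoking Theorem \ref{thm_DeconCharComplexes}, and transporting exactness through Lemma \ref{lem_RestrictComplex}.

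The main obstacle is preserving the invariant $\mathfrak{M}_\xi \cap C \in \mathcal{C}$ at successor steps once $|\mathfrak{M}_\xi|$ has grown past $\kappa$: the stationary set $\mathcal{S}$ lives in $\wp_\kappa(H_\theta)$, so no single element of $\mathcal{S}$ can realize $\mathfrak{M}_{\xi+1}$ itself.  The fix is that $\mathfrak{M}_{\xi+1}$ need only \emph{extend} $\mathfrak{M}_\xi$ by a small stationary-set chunk $\mathfrak{M}^*$, so the new trace increment has size $<\kappa$; a short internal back-and-forth between Skolem closure of $\mathfrak{M}_\xi \cup \mathfrak{M}^*$ and further $\mathcal{S}$-closures then confirms $\mathfrak{M}_{\xi+1} \cap C \in \mathcal{C}$ by combining $\kappa$-a.e.\ closure (to place each intermediate quotient in $\mathcal{C}$) with filtration-closure (to absorb the $\omega$-limit of the back-and-forth back into $\mathcal{C}$).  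Carrying this bookkeeping through all $\lambda$ stages is the delicate technical core of the argument, and is where the content of Theorem 6.3 of \cite{Cox_MaxDecon} resides.
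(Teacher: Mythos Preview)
Your overall strategy agrees with the paper's: verify clause \ref{item_MainChar} of Theorem \ref{thm_DeconChar}, obtain the quotient $\frac{C}{\mathfrak{N}\cap C}\in\mathcal{C}$ by applying $\kappa$-a.e.\ closure to the pair $\mathfrak{N}\prec H_\theta$, and reduce the remaining task to exhibiting $\mathfrak{N}\cap C$ as the union of a $\mathcal{C}$-filtration built from elementary submodels. That much is exactly what the paper does.

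The gap is in your direct single-pass construction once $|\mathfrak{M}_\xi|\ge\kappa$. Your claim that ``the new trace increment has size $<\kappa$'' is false: taking $\mathfrak{M}_{\xi+1}=\mathrm{Sk}^{\mathfrak{A}}(\mathfrak{M}_\xi\cup\mathfrak{M}^*)$, the new elements are values of Skolem terms whose arguments may freely mix elements of $\mathfrak{M}^*$ with elements of $\mathfrak{M}_\xi$, so $|\mathfrak{M}_{\xi+1}\setminus\mathfrak{M}_\xi|$ can be as large as $|\mathfrak{M}_\xi|$. Consequently your ``back-and-forth with further $\mathcal{S}$-closures'' cannot work as described: $\mathcal{S}$ lives in $\wp_\kappa(H_\theta)$, so no element of $\mathcal{S}$ can absorb any set of size $\ge\kappa$, and there is no way to re-enter $\mathcal{S}$ once you have left $\wp_\kappa$. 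A related problem already appears at limit stages of your construction: even when all $\mathfrak{M}_\eta$ are small and lie in $\mathfrak{N}$ as elements, the union $\mathfrak{M}_\gamma$ need not be an \emph{element} of $\mathfrak{N}$, so you cannot invoke $\mathfrak{N}$'s elementarity to find the next $\mathfrak{M}^*\in\mathcal{S}\cap\mathfrak{N}$ containing it.

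The paper avoids all of this by arguing by induction on $|\mathfrak{N}|$. For $|\mathfrak{N}|\ge\kappa$, write $\mathfrak{N}$ as the union of a continuous chain $\langle\mathfrak{N}_\alpha:\alpha<\mathrm{cf}(|\mathfrak{N}|)\rangle$ of elementary submodels of $\mathfrak{A}$, each of strictly smaller cardinality and with $\mathfrak{N}_\alpha\cap\kappa$ transitive. The inductive hypothesis gives $\mathfrak{N}_\alpha\cap C\in\mathcal{C}$ for every $\alpha$; $\kappa$-a.e.\ closure then puts each successor quotient in $\mathcal{C}$; and the key implication \eqref{eq_KeyImplication} (filtration closure plus one more application of a.e.\ closure) yields $\mathfrak{N}\cap C\in\mathcal{C}$ and $\frac{C}{\mathfrak{N}\cap C}\in\mathcal{C}$. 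The stationary-set hypothesis \ref{item_StatReflect} is invoked only in the base case $|\mathfrak{N}|<\kappa$, where every model in sight stays below $\kappa$ and the growth problem never arises. Organizing the argument this way is precisely what lets the ``stationary'' information bootstrap up to arbitrary $\mathfrak{N}$.
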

\begin{proof}
(Sketch)  Assumptions \ref{item_FiltClosure} and \ref{item_Assump_AEclosed} ensure that \textbf{whenever}
\[
\vec{\mathfrak{N}} = \langle \mathfrak{N}_\alpha \ : \ \alpha < \zeta \rangle
\]
is a $\subseteq$-continuous (but not necessarily $\in$-increasing) sequence of elementary submodels of $(H_\theta,\in,R,\kappa, \mathcal{C} \cap H_\theta)$ with each $\mathfrak{N}_\alpha \cap \kappa$ transitive, \textbf{then} the following implication holds for all $C \in \mathfrak{N}_0 \cap \mathcal{C}$, where $\mathfrak{N}:= \bigcup_{\alpha < \zeta} \mathfrak{N}_\alpha$:
\begin{equation}\label{eq_KeyImplication}
\left( \mathfrak{N}_0 \cap C \in \mathcal{C} \text{ and }  \forall \alpha < \zeta \  \frac{\mathfrak{N}_{\alpha+1} \cap C}{\mathfrak{N}_\alpha \cap C} \in \mathcal{C} \right) \implies \ \left( \mathfrak{N} \cap C \in \mathcal{C} \text{ and } \frac{C}{\mathfrak{N} \cap C } \in \mathcal{C} \right). 
\end{equation}

To see why this implication holds, assume its hypothesis; then  $\mathfrak{N} \cap C \in \mathcal{C}$ because of assumption \ref{item_FiltClosure}.  Now set $\mathfrak{N}':= H_\theta$; it follows from assumption \ref{item_Assump_AEclosed} that $\frac{\mathfrak{N}' \cap C}{\mathfrak{N} \cap C} = \frac{C}{\mathfrak{N} \cap C}$ is an element of $\mathcal{C}$.

Then one can prove, by induction on $|\mathfrak{N}|$ and using \eqref{eq_KeyImplication}, that clause \ref{item_MainChar} of Theorem \ref{thm_DeconChar} holds.  Assumption \ref{item_StatReflect} is only used to deal with the base case $|\mathfrak{N}|<\kappa$. 
\end{proof}

\subsection{Further extensions of the Flat Cover Conjecture}\label{sec_Further_EklofTrlifaj}

Recall that the Eklof-Trlifaj Theorem \ref{thm_EklofTrlifaj} was a generalization of the Flat Cover Conjecture (Theorem).  We consider strengthenings of Theorem \ref{thm_EklofTrlifaj}, the first due to Cort{\'e}s-Izurdiaga and \v{S}aroch (Theorem \ref{thm_Saroch_Cortes}), and a new one due to the author (Theorem \ref{thm_CoxGeneralizeET}).  In all three theorems, one assumes $\lambda$ is an infinite regular cardinal, $\mathcal{B}$ is a class of $\lambda$-pure-injective modules, and one wants to prove (possibly with an additional assumption) that ${}^\perp \mathcal{B}$ is deconstructible.  Since ${}^\perp \mathcal{B}$ is filtration-closed by Eklof's Lemma, the problem reduces (by Theorem \ref{thm_DeconChar}) to showing that for some sufficiently large $\kappa$, the implication
\begin{equation}\label{eq_TheMainPerpImp}
A \in {}^\perp \mathcal{B} \cap \mathfrak{N} \ \implies \left( \mathfrak{N} \cap A \in {}^\perp \mathcal{B} \text{ and } \frac{A}{\mathfrak{N} \cap A} \in {}^\perp \mathcal{B} \right)
\end{equation}
holds for almost every $\mathfrak{N} \prec (H_\theta,\in,\dots)$ whose intersection with $\kappa$ is transitive.

In our proof of the Eklof-Trlifaj Theorem \ref{thm_EklofTrlifaj}, where $\lambda = \omega$, the implication \eqref{eq_TheMainPerpImp} held automatically for \emph{any} $\mathfrak{N} \prec (H_\theta,\in)$, as long as $R \in \mathfrak{N}$ and $\mathfrak{N} \cap |R|^+$ was transitive.  Lemma \ref{lem_ElemPureReflect} was the key there, and that lemma was ultimately due to the fact that \emph{every} elementary submodel of $(H_\theta,\in)$ is closed under finite subsets (Fact \ref{fact_BasicFactsElemSub} part \eqref{item_FiniteSubset}).  I.e., $\omega$-purity reflects to \emph{every} $\mathfrak{N} \prec (H_\theta,\in)$, as long as $R$ is both an element and subset of $\mathfrak{N}$.

Remark \ref{rem_LambdaPureLambdaclosed} says that if $\mathfrak{N} \prec (H_\theta,\in)$ is closed under $<\lambda$ sequences, then its trace on any $A \in \mathfrak{N}$ is $\lambda$-pure in $A$.  If $\kappa^{<\lambda} = \kappa$, the collection of such $\mathfrak{N}$ is stationary in $\wp_\kappa(H_\theta)$, but it is also co-stationary if $\lambda \ge \omega_1$.\footnote{Because, for example, there are stationarily many $\mathfrak{N}$ whose intersection with $\kappa$ is $\omega$-cofinal; such $\mathfrak{N}$ are not $<\omega_1$-closed.}  In this situation, the implication \eqref{eq_TheMainPerpImp} is guaranteed to hold for $<\lambda$-closed $\mathfrak{N} \in \wp_\kappa(H_\theta)$,\footnote{Via the proof of Theorem \ref{thm_EklofTrlifaj}, replacing each instance of ``pure" with ``$\lambda$-pure" and using Remark \ref{rem_LambdaPureLambdaclosed} instead of Lemma \ref{lem_ElemPureReflect}.} which at least ensures that ${}^\perp \mathcal{B}$ is a \textbf{Kaplansky Class}.  But being a Kaplansky class is weaker, in general, than deconstructibility (and does \emph{not} abstractly imply the class is precovering).  To achieve deconstructibility, one also needs the implication \eqref{eq_TheMainPerpImp} to hold even for those $\mathfrak{N}$ that are not $<\lambda$-closed.  Theorems \ref{thm_Saroch_Cortes} and \ref{thm_CoxGeneralizeET} employ different assumptions to achieve this; the former assumes closure of ${}^\perp \mathcal{B}$ under direct limits, while the latter assumes ${}^\perp \mathcal{B}$ is eventually a.e.\ closed under quotients.

\begin{theorem}[Cort{\'e}s-Izurdiaga and \v{S}aroch \cite{cortes2023module}, Theorem 4.6 and Corollary 4.7]\label{thm_Saroch_Cortes}
Let $\lambda$ be an infinite regular cardinal.  Suppose $\mathcal{B}$ is a class of $\lambda$-pure-injective modules, and that
\begin{equation}\label{eq_CortesDagger}
{}^\perp \mathcal{B} \text{ is closed under direct limits.}
\tag{$\dagger$}
\end{equation}
Then ${}^\perp \mathcal{B}$ is deconstructible.
\end{theorem}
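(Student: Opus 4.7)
The plan is to construct, for each $A \in {}^\perp \mathcal{B}$, a filtration of $A$ by ``small'' members of ${}^\perp \mathcal{B}$, thereby exhibiting $\kappa$-deconstructibility for some suitably large regular $\kappa$. Since ${}^\perp \mathcal{B}$ is already filtration-closed by Eklof's Lemma \ref{lem_EklofLemma}, this will suffice. The elementary-submodel machinery of Lemma \ref{lem_LambdaClosedLeftPerp} provides natural candidates for the filtration terms via traces on $<\lambda$-closed submodels; the role of hypothesis \eqref{eq_CortesDagger} is to patch these candidates together through limit stages.

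A useful preliminary is the following strengthening of the quotient-conclusion of Lemma \ref{lem_LambdaClosedLeftPerp}: if $X \subseteq Y \in {}^\perp \mathcal{B}$ and $X$ is $\lambda$-pure in $Y$, then $Y/X \in {}^\perp \mathcal{B}$. This is immediate from the long exact sequence
\[
\mathrm{Hom}(Y,B) \twoheadrightarrow \mathrm{Hom}(X,B) \to \mathrm{Ext}(Y/X, B) \hookrightarrow \mathrm{Ext}(Y,B) = 0,
\]
in which the surjection on the left uses $\lambda$-pure-injectivity of $B \in \mathcal{B}$.

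First I would fix $\kappa$ regular with $\kappa > |R|$ such that $\mu^{<\lambda} < \kappa$ for every $\mu < \kappa$; such $\kappa$ exist cofinally in the cardinals. Given $A \in {}^\perp \mathcal{B}$ and a regular $\theta$ with $A \in H_\theta$, I would construct, by standard set-theoretic bookkeeping, a $\subseteq$-continuous chain $\langle \mathfrak{M}_\alpha : \alpha \leq \zeta \rangle$ of elementary submodels of $(H_\theta, \in, R, \kappa)$, each of cardinality $<\kappa$, each with transitive intersection with $\kappa$ and each containing $A$, arranged so that every successor model $\mathfrak{M}_{\alpha+1}$ is $<\lambda$-closed and so that $\bigcup_\alpha \mathfrak{M}_\alpha \supseteq A$. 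Setting $A_\alpha := \mathfrak{M}_\alpha \cap A$ (and reindexing by prepending $0$) gives a candidate filtration of $A$.

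The verification breaks into four checks. First, each $A_\alpha$ is $\lambda$-pure in $A$: at successor stages by Remark \ref{rem_LambdaPureLambdaclosed}, and at limit stages because a directed union of $\lambda$-pure submodules is $\lambda$-pure (regularity of $\lambda$ allows any $<\lambda$-size set of parameters to be absorbed into a single term of the directed system). Second, each $A_\alpha \in {}^\perp \mathcal{B}$: at successor stages by Lemma \ref{lem_LambdaClosedLeftPerp}, and at limit stages by hypothesis \eqref{eq_CortesDagger} applied to the directed system $(A_\beta)_{\beta < \alpha}$. Third, each successor quotient $A_{\alpha+1}/A_\alpha$ lies in ${}^\perp \mathcal{B}$ by the preliminary remark, because the first check transfers $\lambda$-purity of $A_\alpha$ inside $A$ to $\lambda$-purity of $A_\alpha$ inside $A_{\alpha+1}$. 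Fourth, the bound $|A_{\alpha+1}/A_\alpha| < \kappa$ follows from the size constraint on $\mathfrak{M}_{\alpha+1}$, and together with $|R| < \kappa$ gives $<\kappa$-presentedness by Remark \ref{rem_R_less_kappa}. The main obstacle is the limit-stage case of the second check, where direct-limit closure is essential; without it one has no way to propagate membership in ${}^\perp \mathcal{B}$ through limits of non-$<\lambda$-closed models, which is precisely the gap that hypothesis \eqref{eq_CortesDagger} is designed to fill.
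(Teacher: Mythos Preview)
Your overall strategy---build a continuous chain of elementary submodels with $<\lambda$-closed successors and take traces on $A$---is sound and close in spirit to the paper's sketch, which instead verifies clause~\ref{item_MainChar} of Theorem~\ref{thm_DeconChar} by induction on $|\mathfrak{N}|$.  Both approaches route through Lemma~\ref{lem_LambdaClosedLeftPerp} at the base and through~\eqref{eq_CortesDagger} at limits.

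There is, however, a genuine gap in your first check at limit stages.  Your justification (``regularity of $\lambda$ allows any $<\lambda$-size set of parameters to be absorbed into a single term of the directed system'') only works when the chain below $\alpha$ is $\lambda$-directed, i.e., when $\operatorname{cf}(\alpha) \ge \lambda$.  At a limit $\alpha$ with $\operatorname{cf}(\alpha) < \lambda$, a system of $<\lambda$ equations can have its parameters spread cofinally through the $A_\beta$'s with no single $A_\beta$ containing them all, so $A_\alpha$ need not be $\lambda$-pure in $A$.  This breaks your third check (the successor quotient $A_{\alpha+1}/A_\alpha$) at exactly those $\alpha$.

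The fix is to invoke~\eqref{eq_CortesDagger} a second time, now for a system of \emph{epimorphisms}: for successor $\beta < \alpha$ the submodule $A_\beta$ is $\lambda$-pure in $A_{\alpha+1}$, so your preliminary remark gives $A_{\alpha+1}/A_\beta \in {}^\perp\mathcal{B}$, and then $A_{\alpha+1}/A_\alpha = \varinjlim_{\beta < \alpha} A_{\alpha+1}/A_\beta$ lies in ${}^\perp\mathcal{B}$ by direct-limit closure.  This is precisely the point the paper flags in its footnote: one needs~\eqref{eq_CortesDagger} for \emph{all} directed systems, not just systems of monomorphisms, because the quotient $A/(\mathfrak{N}\cap A)$ (or in your setup $A_{\alpha+1}/A_\alpha$) arises as a direct limit along surjections.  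Once patched this way, your argument goes through and is essentially a ``direct filtration'' unwinding of the paper's inductive verification of clause~\ref{item_MainChar}.
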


One could re-prove Theorem \ref{thm_Saroch_Cortes} in a way similar to our proof of the Eklof-Trlifaj Theorem \ref{thm_EklofTrlifaj}.  The argument from the proof of Theorem \ref{thm_EklofTrlifaj}, together with Remark \ref{rem_LambdaPureLambdaclosed}, would imply that (under the assumptions of Theorem \ref{thm_Saroch_Cortes}) 
\[
A \in \mathfrak{N} \cap {}^\perp \mathcal{B} \ \implies \ \left( \mathfrak{N} \cap A \in {}^\perp \mathcal{B} \text{ and } \frac{A}{\mathfrak{N} \cap A} \in {}^\perp \mathcal{B} \right)
\]
holds whenever $\mathfrak{N}$ is closed under sequences of length $<\lambda$.  Then one could use assumption \eqref{eq_CortesDagger} to achieve the same implication for almost every $\mathfrak{N}$ of any size, by induction on $|\mathfrak{N}|$ and using filtrations of $\mathfrak{N}$.\footnote{One just views $\mathfrak{N}$ as the union of a smooth sequence $\big\langle \mathfrak{N}_i \ : \ i < \text{cf}(|\mathfrak{N}|) \big\rangle$ of elementary submodels, each of size $<|\mathfrak{N}|$.  Then for any $A \in \mathfrak{N}$,  $\mathfrak{N} \cap A$ is the direct limit of $\big\langle \mathfrak{N}_i \cap A \ : \ i < \text{cf}(|\mathfrak{N}|) \big\rangle$, and $\frac{A}{\mathfrak{N} \cap A}$ is the direct limit of $\left\langle \frac{A}{\mathfrak{N}_i \cap A} \ : \ i < \text{cf}(|\mathfrak{N}|) \right\rangle$.  It is important here that the assumption \eqref{eq_CortesDagger} is required to hold of all directed systems, not just directed systems of monomorphisms.}

We strengthen Theorem \ref{thm_Saroch_Cortes}:  
\begin{theorem}\label{thm_CoxGeneralizeET}
Theorem \ref{thm_Saroch_Cortes} holds when the assumption \eqref{eq_CortesDagger} is replaced by:
\begin{equation}\label{eq_MyNewThm}
{}^\perp \mathcal{B} \text{ is eventually a.e. closed under quotients}. \tag{$\dagger \dagger$}
\end{equation}
\end{theorem}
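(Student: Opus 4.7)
The plan is to apply Theorem \ref{thm_TurnStatToClub} with $\mathcal{C} := {}^\perp \mathcal{B}$. That theorem has three hypotheses: filtration-closure, $\kappa$-a.e.\ closure under quotients, and stationarity of $\mathcal{C} \cap \wp_\kappa(C)$ in $\wp_\kappa(C)$ for every $C \in \mathcal{C}$. Filtration-closure comes for free from Eklof's Lemma (Lemma \ref{lem_EklofLemma}), and the second hypothesis is precisely the assumption \eqref{eq_MyNewThm}, once $\kappa$ is chosen large enough for the ``eventually'' to kick in. So the real task is verifying the stationarity clause.

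I would choose a regular cardinal $\kappa > |R|$ satisfying $\kappa^{<\lambda} = \kappa$ (such $\kappa$ exist arbitrarily large, e.g.\ by iterating $\mu \mapsto \mu^{<\lambda}$) and large enough that \eqref{eq_MyNewThm} is witnessed at $\kappa$. Fix an arbitrary $C \in {}^\perp \mathcal{B}$ and a regular $\theta$ with $C, R \in H_\theta$. By Fact \ref{fact_BasicFactsElemSub}(3)(b) there are stationarily many $<\lambda$-closed $\mathfrak{N} \in \wp^*_\kappa(H_\theta)$ with $\mathfrak{N} \prec (H_\theta,\in)$. For any such $\mathfrak{N}$ containing $C$ and $R$ as elements, transitivity of $\mathfrak{N} \cap \kappa$ and $|R| < \kappa$ force $R \subset \mathfrak{N}$, so Lemma \ref{lem_LambdaClosedLeftPerp}, applied one $B \in \mathcal{B}$ at a time (crucially, that lemma does not require $B \in \mathfrak{N}$), yields $\mathfrak{N} \cap C \in {}^\perp B$ for every $B \in \mathcal{B}$, whence $\mathfrak{N} \cap C \in {}^\perp \mathcal{B}$.

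To conclude that $\{X \in \wp_\kappa(C) : X \in {}^\perp \mathcal{B}\}$ is stationary in $\wp_\kappa(C)$, I would use the elementary-submodel characterization of stationarity: given any first-order structure $\mathfrak{A}$ on $C$ in a countable signature, code $\mathfrak{A}$ into $H_\theta$ and choose an $\mathfrak{N}$ as above with $\mathfrak{A} \in \mathfrak{N}$. Then $\mathfrak{N} \cap C$ is the universe of an elementary substructure of $\mathfrak{A}$, has $|\mathfrak{N} \cap C| < \kappa$ with transitive intersection with $\kappa$, and lies in ${}^\perp \mathcal{B}$ by the previous paragraph. This produces the required witness, so the stationarity clause of Theorem \ref{thm_TurnStatToClub} is satisfied, and that theorem then delivers $\kappa$-deconstructibility of ${}^\perp \mathcal{B}$.

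The main conceptual point — and the main obstacle one has to articulate carefully — is the mismatch between the reflection property that is easy to produce and the one that is needed. Lemma \ref{lem_LambdaClosedLeftPerp} only delivers reflection at $<\lambda$-closed $\mathfrak{N}$, a stationary but (for $\lambda \geq \omega_1$) non-club collection, so one cannot hope to verify clause \ref{item_MainChar} of Theorem \ref{thm_DeconChar} directly. Assumption \eqref{eq_MyNewThm} is precisely the mechanism that propagates reflection from this stationary collection to arbitrary $\mathfrak{N}$; this is what Theorem \ref{thm_TurnStatToClub} accomplishes internally via clause \ref{item_DiagVersion2models} of the top-down characterization, and is why replacing the direct-limit assumption $(\dagger)$ in Cort{\'e}s-Izurdiaga--\v{S}aroch by the weaker assumption \eqref{eq_MyNewThm} still suffices.
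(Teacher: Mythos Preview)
Your proposal is correct and follows essentially the same route as the paper: choose a regular $\kappa > |R|$ with $\kappa^{<\lambda}=\kappa$ large enough to witness \eqref{eq_MyNewThm}, use Lemma \ref{lem_LambdaClosedLeftPerp} on the stationary set of $<\lambda$-closed $\mathfrak{N} \in \wp^*_\kappa(H_\theta)$ to verify the stationarity clause of Theorem \ref{thm_TurnStatToClub}, and invoke Eklof's Lemma for filtration-closure. Your explicit projection of stationarity from $\wp_\kappa(H_\theta)$ down to $\wp_\kappa(C)$ via coding the structure $\mathfrak{A}$ into $\mathfrak{N}$ spells out a step the paper leaves implicit, but the argument is otherwise identical.
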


\begin{remark}
We can also weaken the assumption on $\mathcal{B}$, to only require injectivity with respect to inclusions of the form
\[
\mathfrak{N} \cap A \to A
\]
where $A \in \mathfrak{N} \prec (H_\theta,\in)$ and $\mathfrak{N}$ is closed under $<\lambda$ sequences (all such inclusions are $\lambda$-pure, by Remark \ref{rem_LambdaPureLambdaclosed}).
\end{remark}

\begin{proof}
(of Theorem \ref{thm_CoxGeneralizeET}) Fix a sufficiently large regular $\kappa$ such that:
\begin{itemize}
 \item ${}^\perp \mathcal{B}$ is $\kappa$-a.e. closed under quotients; and
 \item $\kappa^{<\lambda} = \kappa$.
\end{itemize}
Then by Fact \ref{fact_BasicFactsElemSub}, for any $\theta > \kappa$ the set 
\[
S_{\kappa,\lambda}:=\{ \mathfrak{N} \prec (H_\theta,\in) \ : \ |\mathfrak{N}|<\kappa, \ \mathfrak{N} \cap \kappa \in \kappa, \text{ and } {}^{<\lambda} \mathfrak{N} \subset \mathfrak{N} \}
\]
is stationary in $\wp_\kappa(H_\theta)$.  By Lemma \ref{lem_LambdaClosedLeftPerp}, for every $\mathfrak{N} \in S_{\kappa,\lambda}$,
\begin{equation*}\label{eq_F_On_N}
A \in \mathfrak{N} \cap {}^\perp \mathcal{B} \ \implies \  \mathfrak{N} \cap A \in {}^\perp \mathcal{B}.
\end{equation*}
So 
\[
\{ \mathfrak{N} \cap A \ : \ A \in \mathfrak{N} \in S_{\kappa,\lambda}  \}
\]
is a stationary subset of ${}^\perp \mathcal{B} \cap \wp_\kappa(A)$.  By Theorem \ref{thm_TurnStatToClub}, ${}^\perp \mathcal{B}$ is $\kappa$-deconstructible.

\end{proof}

The assumptions of Theorem \ref{thm_CoxGeneralizeET} follow from those of Theorem \ref{thm_Saroch_Cortes}:  suppose $\mathcal{B}$ satisfies the assumptions of Theorem \ref{thm_Saroch_Cortes}.  Then, by that theorem, ${}^\perp \mathcal{B}$ is deconstructible, and hence by Lemma \ref{lem_DeconImpliesClosure}, ${}^\perp \mathcal{B}$ is eventually a.e.\ closed under quotients.  

However, under ``smallness" assumptions on the universe---in particular, if $0^\sharp$ does not exist---then the assumptions of Theorems \ref{thm_CoxGeneralizeET},  \ref{thm_Saroch_Cortes}, and \ref{thm_EklofTrlifaj} are equivalent.  This follows from a theorem of Cort{\'e}s-Izurdiaga and \v{S}aroch~\cite{cortes2023module}, which states that in the presense of enough nonreflecting stationary sets (in particular, if $0^\sharp$ does not exist), $\lambda$-pure-injectivity is equivalent to ordinary $\aleph_0$-pure injectivity, for any $\lambda$ and over any ring.  In such a model, if $\mathcal{B}$ is a class of $\lambda$-pure-injectives, then they are pure-injective, and it follows that ${}^\perp \mathcal{B}$ is closed under direct limits by a result of Angeleri, Mantese, Tonolo, and Trlifaj (cf.\ proof of Lemma 9 of \cite{MR1778163}).  And by Eklof-Trlifaj's Theorem \ref{thm_EklofTrlifaj}, ${}^\perp \mathcal{B}$ is deconstructible.


\subsection{Gorenstein Homological Algebra}\label{sec_Gorenstein}

As mentioned in the introduction, one goal of approximation theory is to do homological algebra relative to certain classes, in place of the classical ones (e.g. projective and injective modules).  Some of the most commonly-encountered classes in the recent literature are the Gorenstein Projective ($\mathcal{GP}$), Gorenstein Flat ($\mathcal{GF}$), and Gorenstein Injective ($\mathcal{GI}$) modules (Enochs-Jenda~\cite{MR1753146}, Iacob~\cite{MR3839274}); these are defined after Definition \ref{def_AcCF} below.  Early on, the question was raised of whether analogues of Kaplansky's Theorem \ref{thm_Kaplansky} and/or the Flat Cover Theorem \ref{thm_FlatCover} hold in the Gorenstein setting.  In particular, the following question appears many times in the literature:\footnote{E.g., \cite{MR2529328}, \cite{MR3621667}, \cite{MR1363858}, \cite{MR1753146}, \cite{MR3690524},  \cite{MR3598789}, \cite{MR3459032}, \cite{MR3760311}, \cite{MR2038564}, \cite{MR3839274}, \cite{MR4115324}, \cite{MR2283103},  \cite{MR2737778}, \cite{MR3473859}, \cite{MR4166732}.}

\begin{question}\label{q_GP_GF_precover}
Is $\mathcal{GP}$ and/or $\mathcal{GF}$ always a precovering class (over every ring)?
\end{question}

For $\mathcal{GF}$ the answer is affirmative, over ZFC alone, for all rings, as shown by Yang and Liang (see Theorem \ref{thm_YangLiangDeconPrecover} below).  For $\mathcal{GP}$, the answer is affirmative in ZFC for certain kinds of rings, and for all rings if there are large cardinals (\cite{cortes2023module}, \cite{Cox_MaxDecon}), but it is still open in general.  Section \ref{sec_Gor_ZFC} focuses on what's known to be provable in ZFC alone about Question \ref{q_GP_GF_precover}, while Section \ref{sec_MaxDecon} discusses what can be done with large cardinal assumptions.

\subsubsection{ZFC results}\label{sec_Gor_ZFC}

We will use Theorems \ref{thm_DeconChar} and \ref{thm_DeconCharComplexes} to re-prove the following theorems (see below for the meaning of $\mathcal{X}$-$\mathcal{GF}$ and $\mathcal{X}$-$\mathcal{GP}$):
\begin{theorem}[Estrada-Iacob-P\'{e}rez~\cite{MR4132088}]\label{thm_EstradaEtAl}
Any class of the form $\mathcal{X}$-$\mathcal{GF}$ is a precovering class.  In particular, $\mathcal{GF}$ is always a precovering class (that $\mathcal{GF}$ is precovering---i.e., that $\mathcal{X}$-$\mathcal{GF}$ is precovering when $\mathcal{X}$ is the class of injectives---was first proved by Yang-Liang~\cite{MR3178060}).
\end{theorem}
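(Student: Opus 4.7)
The plan is to show that $\mathcal{X}$-$\mathcal{GF}$ is deconstructible, which by Theorem \ref{thm_DeconPrecover} immediately yields the precovering property. Deconstructibility reduces, via Theorem \ref{thm_DeconChar} (combined with the complex-valued Theorem \ref{thm_DeconCharComplexes} for working with witnessing resolutions), to establishing two conditions: (i) closure of $\mathcal{X}$-$\mathcal{GF}$ under transfinite extensions, and (ii) the top-down reflection that for sufficiently large regular $\kappa > |R|$ and typical $\mathfrak{N} \prec (H_\theta, \in, R, \kappa, \mathcal{X} \cap H_\theta, \mathcal{X}\text{-}\mathcal{GF} \cap H_\theta)$ with $\mathfrak{N}\cap\kappa$ transitive, one has the implication
\[
M \in \mathfrak{N} \cap \mathcal{X}\text{-}\mathcal{GF} \ \implies \ \left( \mathfrak{N} \cap M \in \mathcal{X}\text{-}\mathcal{GF} \ \text{and}\ \tfrac{M}{\mathfrak{N} \cap M} \in \mathcal{X}\text{-}\mathcal{GF} \right).
\]
Condition (i) is a standard Eklof-type argument using long exact Tor sequences, which I would dispatch briefly; the real work is in (ii).

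For (ii), fix $M$ and, by elementarity, obtain a witness $F_\bullet \in \mathfrak{N}$: a totally acyclic complex of flat modules with $F_\bullet \otimes X$ exact for every $X \in \mathcal{X}$ and $M \simeq Z_0(F_\bullet)$. Apply Lemma \ref{lem_ElemPureReflect} termwise: each inclusion $\mathfrak{N} \cap F_n \hookrightarrow F_n$ is pure, so Fact \ref{fact_FlatPure} gives flatness of both $\mathfrak{N} \cap F_n$ and $F_n/(\mathfrak{N} \cap F_n)$. Lemma \ref{lem_RestrictComplex} then yields exactness of $F_\bullet \restriction \mathfrak{N}$ and $F_\bullet / \mathfrak{N}$, and since kernels commute with intersection by elementary submodels, $Z_0(F_\bullet \restriction \mathfrak{N}) = \mathfrak{N} \cap M$ and $Z_0(F_\bullet/\mathfrak{N}) = M/(\mathfrak{N}\cap M)$. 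Thus $F_\bullet \restriction \mathfrak{N}$ and $F_\bullet / \mathfrak{N}$ are totally acyclic flat complexes whose degree-$0$ cycles are the desired modules.

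What remains is to check that for every $X \in \mathcal{X}$, both $(F_\bullet \restriction \mathfrak{N}) \otimes X$ and $(F_\bullet/\mathfrak{N}) \otimes X$ are exact. Applying $-\otimes X$ to the termwise pure-exact sequence of complexes $0 \to F_\bullet \restriction \mathfrak{N} \to F_\bullet \to F_\bullet/\mathfrak{N} \to 0$---pure in the strong sense that each quotient $F_n/(\mathfrak{N} \cap F_n)$ is flat---preserves short exactness, yielding
\[
0 \to (F_\bullet \restriction \mathfrak{N}) \otimes X \to F_\bullet \otimes X \to (F_\bullet/\mathfrak{N}) \otimes X \to 0,
\]
a short exact sequence of complexes whose middle is exact by the choice of $F_\bullet$. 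By Lemma \ref{lem_3_by_3_lemma} it suffices to establish exactness of one outer term. This I would obtain from Tor-vanishing: from $\mathrm{Tor}^R_i(M,X) = 0$ for $i \ge 1$ and the long exact Tor sequence of $0 \to \mathfrak{N} \cap M \to M \to M/(\mathfrak{N} \cap M) \to 0$ (whose right term is flat), one deduces $\mathrm{Tor}^R_i(\mathfrak{N} \cap M, X) = 0$ for $i \ge 1$; the homology of $(F_\bullet \restriction \mathfrak{N}) \otimes X$ at each degree is a Tor of a cycle of $F_\bullet \restriction \mathfrak{N}$, and these cycles satisfy the same vanishing by propagation along the flat resolution $F_\bullet \restriction \mathfrak{N}$.

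The main obstacle I anticipate is handling the negatively-indexed cycles of $F_\bullet \restriction \mathfrak{N}$ uniformly---those are not quotients of $\mathfrak{N} \cap M$ but further syzygies, and one must check Tor-vanishing for all of them simultaneously. The cleanest route is to observe that by elementarity every cycle $Z_n(F_\bullet)$ lies in $\mathfrak{N} \cap \mathcal{X}\text{-}\mathcal{GF}$, so the same purity-plus-flat-quotient argument applies at each degree, giving $\mathrm{Tor}^R_i\bigl(Z_n(F_\bullet \restriction \mathfrak{N}), X\bigr) = 0$ for all $n \in \mathbb{Z}$ and $i \ge 1$; this is precisely what is needed for exactness of the outer complex. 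An analogous argument (swapping the roles of $\mathfrak{N} \cap M$ and $M/(\mathfrak{N} \cap M)$) handles the quotient, completing the verification of (ii).
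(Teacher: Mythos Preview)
There is a genuine structural gap in part (i), and a technical error in part (ii).

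\textbf{The gap in (i).} You assert that closure of $\mathcal{X}\text{-}\mathcal{GF}$ under transfinite extensions is ``a standard Eklof-type argument using long exact Tor sequences''. It is not. Membership in $\mathcal{X}\text{-}\mathcal{GF}$ is not merely a Tor-vanishing condition: it requires the \emph{existence} of a two-sided acyclic complex of flat modules witnessing the module as a cycle, and constructing such a witness for a transfinite extension of $\mathcal{X}\text{-}\mathcal{GF}$ modules is highly nontrivial. Even for ordinary $\mathcal{GF}$, closure under extensions over arbitrary rings was open for years and only recently settled. This is precisely why the paper does \emph{not} attempt to prove deconstructibility of $\mathcal{X}\text{-}\mathcal{GF}$ itself. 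Instead it proves deconstructibility of the \emph{complex class} $\text{Ac}(\mathcal{C}_{\text{flat}}, \mathcal{X}\otimes -)$ in $\text{Ch}(R)$---whose filtration-closure \emph{does} follow from Eklof's Lemma, after the equivalence ``$F_\bullet \in \text{Ac}(\mathcal{C},\mathcal{F}) \iff \text{all cycles } C_n \in {}^\perp\{X^+ : X \in \mathcal{X}\}$'' reduces it to a root-of-Ext condition---and then invokes Yang--Liang's Theorem \ref{thm_YangLiangDeconPrecover} to descend to precovering of the cycle class, without ever touching filtration-closure of $\mathcal{X}\text{-}\mathcal{GF}$.

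\textbf{The error in (ii).} Your parenthetical ``(whose right term is flat)'' about $M/(\mathfrak{N}\cap M)$ is false: $M$ is a cycle of $F_\bullet$, not one of the flat entries $F_n$, so purity of $\mathfrak{N}\cap M$ in $M$ does not make the quotient flat. The conclusion you want, $\text{Tor}_i(\mathfrak{N}\cap C_n, X)=0$ for all cycles $C_n$, does hold---but from purity directly: a pure-exact sequence is a filtered colimit of split ones, so all connecting maps in the long Tor sequence vanish, giving $\text{Tor}_i(\mathfrak{N}\cap C_n,X)\hookrightarrow\text{Tor}_i(C_n,X)=0$. With that fix, your part (ii) is essentially the same content as the paper's reflection argument for the complex class; the paper just packages it by passing through character duals (which are pure-injective) and quoting the already-proved Eklof--Trlifaj Theorem \ref{thm_EklofTrlifaj} for ${}^\perp \mathcal{Y}$. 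Your route and theirs agree once you relocate your argument to the level of complexes and abandon the unsupported claim of filtration-closure of $\mathcal{X}\text{-}\mathcal{GF}$.
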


\begin{theorem}\label{thm_GP_ZFC}
For any infinite regular $\lambda$ and any class $\mathcal{X}$ of $\lambda$-pure-injective modules, the class $\mathcal{X}$-$\mathcal{GP}$ is deconstructible. 
\end{theorem}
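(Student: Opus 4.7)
The plan is to verify clause~\ref{item_MainChar} of the characterization Theorem~\ref{thm_DeconChar} for $\mathcal{X}$-$\mathcal{GP}$, following the template of Theorem~\ref{thm_CoxGeneralizeET} but passing through complex-level witnesses. Fix a regular $\kappa > |R|$ with $\kappa^{<\lambda} = \kappa$, and focus first on $\mathfrak{N} \prec (H_\theta,\in,R,\kappa,\mathcal{X}\cap H_\theta)$ that is $<\lambda$-closed with $\mathfrak{N} \cap \kappa$ transitive; by Fact~\ref{fact_BasicFactsElemSub} such $\mathfrak{N}$ form a stationary class in $\wp_\kappa(H_\theta)$. Given $M \in \mathfrak{N} \cap \mathcal{X}$-$\mathcal{GP}$, elementarity supplies an acyclic complex $P_\bullet \in \mathfrak{N}$ of projectives with $M$ a cycle and $\mathrm{Hom}(P_\bullet,X)$ exact for every $X \in \mathcal{X}$ (even those outside $\mathfrak{N}$). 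The goal is to show that $P_\bullet \restriction \mathfrak{N}$ and $P_\bullet / \mathfrak{N}$ are again witnesses of this kind, for $\mathfrak{N} \cap M$ and $M/(\mathfrak{N}\cap M)$ respectively.

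Three ingredients are needed for this reflection. First, acyclicity of $P_\bullet \restriction \mathfrak{N}$ and $P_\bullet/\mathfrak{N}$ follows from Lemma~\ref{lem_RestrictComplex}. Second, each trace $\mathfrak{N} \cap P_n$ and each quotient $P_n/(\mathfrak{N}\cap P_n)$ is projective, by the argument from the Kaplansky Theorem (Section~\ref{sec_Kaplansky}) combined with $R \subset \mathfrak{N}$. Third, $\mathrm{Hom}(-,X)$-exactness against each $X \in \mathcal{X}$ reduces via the short exact sequences $0 \to Z_{n+1} \to P_n \to Z_n \to 0$ of cycles to verifying $\mathrm{Ext}^1(\mathfrak{N}\cap Z_n, X) = 0$ and $\mathrm{Ext}^1(Z_n/(\mathfrak{N}\cap Z_n),X) = 0$; both follow from Lemma~\ref{lem_LambdaClosedLeftPerp}, applied to $Z_n \in \mathfrak{N} \cap {}^\perp X$ (the lemma being available precisely because $\mathfrak{N}$ is $<\lambda$-closed and $X$ is $\lambda$-pure-injective).

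To upgrade this stationary reflection at $<\lambda$-closed $\mathfrak{N}$ to clause~\ref{item_MainChar}, I plan to apply Theorem~\ref{thm_TurnStatToClub}. Filtration closure of $\mathcal{X}$-$\mathcal{GP}$ is standard Gorenstein homological algebra: assemble the complex witnesses for successive quotients and use Eklof's Lemma~\ref{lem_EklofLemma} on the resulting cycle filtrations to preserve $\mathrm{Ext}$-vanishing against $\mathcal{X}$. The main obstacle is the eventual a.e.\ closure of $\mathcal{X}$-$\mathcal{GP}$ under quotients. For pairs $\mathfrak{N} \prec \mathfrak{N}'$ of $<\lambda$-closed elementary submodels the inclusion $\mathfrak{N}\cap Z_n \hookrightarrow \mathfrak{N}' \cap Z_n$ is $\lambda$-pure (it lies between two $\lambda$-pure submodules of $Z_n$ by Remark~\ref{rem_LambdaPureLambdaclosed}), so $\lambda$-pure-injectivity of $X \in \mathcal{X}$ makes the restriction $\mathrm{Hom}(\mathfrak{N}'\cap Z_n,X) \to \mathrm{Hom}(\mathfrak{N}\cap Z_n,X)$ surjective and hence $\mathrm{Ext}^1((\mathfrak{N}'\cap Z_n)/(\mathfrak{N}\cap Z_n),X) = 0$ via the long exact sequence of Fact~\ref{fact_long_exact}. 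This handles quotient closure along pairs of $<\lambda$-closed models; the chain-of-elementary-submodels argument in the proof of Theorem~\ref{thm_TurnStatToClub} can then be restricted to run along $<\lambda$-closed links of a continuous filtration, so stationary-pair quotient closure suffices to conclude $\kappa$-deconstructibility of $\mathcal{X}$-$\mathcal{GP}$.
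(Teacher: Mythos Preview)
Your first two paragraphs are essentially correct and match the paper's approach for the stationary-reflection step at $<\lambda$-closed $\mathfrak{N}$. The gap is in your third paragraph, in how you establish eventual a.e.\ closure under quotients.

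Your quotient-closure argument needs both $\mathfrak{N}$ and $\mathfrak{N}'$ to be $<\lambda$-closed, since Remark~\ref{rem_LambdaPureLambdaclosed} requires $<\lambda$-closure to get $\lambda$-purity of $\mathfrak{N} \cap Z_n$ in $Z_n$. You then assert that the induction in Theorem~\ref{thm_TurnStatToClub} ``can be restricted to run along $<\lambda$-closed links of a continuous filtration.'' For $\lambda \ge \omega_1$ this is impossible: in any $\subseteq$-continuous chain $\langle \mathfrak{N}_\alpha \rangle$, at a limit $\alpha$ with $\mathrm{cf}(\alpha) < \lambda$ the model $\mathfrak{N}_\alpha = \bigcup_{\beta < \alpha} \mathfrak{N}_\beta$ is forced by continuity and cannot be $<\lambda$-closed (Fact~\ref{fact_BasicFactsElemSub}). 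At such $\alpha$ the pair $(\mathfrak{N}_\alpha, \mathfrak{N}_{\alpha+1})$ is not covered by your argument, and the induction breaks.

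The paper sidesteps this by working at the complex level and showing that a.e.\ quotient closure for $\text{Ac}(\mathcal{C}_{\text{proj}}, \text{Hom}(-,\mathcal{X}))$ holds for \emph{all} pairs $\mathfrak{N} \prec \mathfrak{N}'$ with transitive intersection with $\kappa$, with no closure hypothesis whatsoever. The point (Theorem~\ref{thm_GorComplexAEclosure} and Lemma~\ref{lem_GorComplexesSatisfy}) is that since each $P_n$ is projective, Kaplansky's Theorem makes each $\frac{\mathfrak{N}' \cap P_n}{\mathfrak{N} \cap P_n}$ projective, so the short exact sequence
\[
0_\bullet \to P_\bullet \restriction \mathfrak{N} \to P_\bullet \restriction \mathfrak{N}' \to \frac{P_\bullet \restriction \mathfrak{N}'}{P_\bullet \restriction \mathfrak{N}} \to 0_\bullet
\]
is \emph{degreewise split}. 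Then $\text{Hom}(-,X)$ preserves its exactness for every module $X$, with no pure-injectivity needed, and the 3-by-3 lemma gives acyclicity of $\text{Hom}\big(\frac{P_\bullet \restriction \mathfrak{N}'}{P_\bullet \restriction \mathfrak{N}}, X\big)$. The $\lambda$-pure-injectivity of $\mathcal{X}$ is only used where you used it: at the stationary-reflection step for $<\lambda$-closed $\mathfrak{N}$. With full a.e.\ quotient closure in hand, Theorem~\ref{thm_TurnStatToClub} applies to $\text{Ac}(\mathcal{C}_{\text{proj}}, \text{Hom}(-,\mathcal{X}))$ in $\text{Ch}(R)$, and Theorem~\ref{thm_WeakDeconCycles} transfers weak deconstructibility down to $\mathcal{X}$-$\mathcal{GP}$ in $R$-Mod.
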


\noindent Theorem \ref{thm_GP_ZFC} modestly strengthens a result from Cort{\'e}s-Izurdiaga and \v{S}aroch \cite{cortes2023module}, who proved it under the additional assumption that $\mathcal{X}$ contains all the projective modules.  In particular, their result showed that $\mathcal{GP}$ is always deconstructible over right $\Sigma$-pure-injective rings (which includes, for example, rings that are left coherent and right perfect; see \cite{cortes2023module}).

\begin{definition}\label{def_AcCF}
If $\mathcal{C}$ is a class of acyclic complexes in $\text{Ch}(R)$ and $\mathcal{F}$ is a (possibly class-indexed)\footnote{Since a single functor is a class-sized object, working in ZFC requires some care that membership in $\mathcal{F}$ is uniformly definable.  This is the case in the Gorenstein applications, since in those situations, $\mathcal{F}$ is (e.g.) the class of all functors of the form $- \otimes X$ for $X$ in a fixed class $\mathcal{X}$ of modules.} collection of functors from $\text{Ch}(R) \to \text{Ch}(\mathbb{Z})$, set
\[
\text{Ac}(\mathcal{C},\mathcal{F}):= \Big\{ C_\bullet \in \mathcal{C} \ : \ \forall F \in \mathcal{F} \ F(C_\bullet) \text{ is acyclic}  \Big\}.
\]
\end{definition}
\begin{definition}
If $\mathcal{D}$ is a class of complexes of $R$-modules,
\[
\text{Cyc}(\mathcal{D}):=\big\{ C \in R \text{-Mod} \ : \ \exists D_\bullet \in \mathcal{D} \ C = \text{ker}(D_0 \to D_1)  \big\}.
\]
\end{definition}

All classes in the literature on Gorenstein Homological algebra known to the author are of the form $\text{Cyc} \left( \text{Ac}(\mathcal{C},\mathcal{F}) \right)$ for some class $\mathcal{C}$ of complexes and some collection $\mathcal{F}$ of functors.  For example:

\begin{enumerate}[label=(\Alph*)]
 \item\label{item_Def_XGP} For a fixed class $\mathcal{X}$ of modules:  $\mathcal{X}$-$\mathcal{GP}$ is $\text{Cyc}\left( \text{Ac}(\mathcal{C}_{\text{proj}}, \text{Hom}(-,\mathcal{X})  )\right)$, where $\mathcal{C}_{\text{proj}}$ is the class of acyclic complexes with projective entries, and $\text{Hom}(-,\mathcal{X})$ is the collection of (contravariant) functors of the form 
 \[
 \text{Hom}(-,X): \text{Ch}(R) \to \text{Ch}(\mathbb{Z}),
 \]
 where $X$ ranges over all members of $\mathcal{X}$.  The (ordinary) Gorenstein Projective modules $\mathcal{GP}$ is the special case where $\mathcal{X}$ is the class of projective modules.

 \item\label{item_Def_XGF} For a fixed class $\mathcal{X}$ of (right) modules:  $\mathcal{X}$-$\mathcal{GF}$ is $\text{Cyc}\left(\text{Ac}(\mathcal{C}_{\text{flat}}, \mathcal{X} \otimes -  )\right)$, where $\mathcal{C}_{\text{flat}}$ is the class of acyclic complexes with flat entries, and $\mathcal{X} \otimes -$ is the collection of (covariant) functors of the form 
 \[
 X \otimes -: \text{Ch}(R) \to \text{Ch}(\mathbb{Z}),
 \]
 where $X$ ranges over all members of $\mathcal{X}$.  The (ordinary) Gorenstein Flat modules $\mathcal{GF}$ is the special case where $\mathcal{X}$ is the class of (right) injective modules.
 
\end{enumerate}

To re-prove Theorem \ref{thm_EstradaEtAl} we will use:
\begin{theorem}[Yang-Liang~\cite{MR3178060}, pp. 3083-3084]\label{thm_YangLiangDeconPrecover}
Suppose $\mathcal{D}$ is a precovering class in $\text{Ch}(R)$, and each member of $\mathcal{D}$ is acyclic.  Then $\text{Cyc}(\mathcal{D})$ is a precovering class in $\text{Ch}(R)$.
\end{theorem}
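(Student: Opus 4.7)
The strategy is to reduce precovering of the module class $\mathrm{Cyc}(\mathcal D)$ to the hypothesised precovering property of $\mathcal D$ inside $\mathrm{Ch}(R)$, by passing back and forth through the $0$th-cycle functor $Z_0: \mathrm{Ch}(R) \to R\text{-Mod}$ which, thanks to the acyclicity of every member of $\mathcal D$, sends $\mathcal D$ into $\mathrm{Cyc}(\mathcal D)$.  Given a target $R$-module $M$, the first step is to attach to $M$ a canonical acyclic complex $E_\bullet$ with $Z_0(E_\bullet) \simeq M$, manufactured so that $E_\bullet$ serves as a ``test object'' for $Z_0$: the natural choice is a complete resolution of $M$ obtained by splicing an injective coresolution of $M$ into nonnegative degrees with a projective resolution of $M$ into negative degrees, so that $M$ appears simultaneously as $\ker(E_0 \to E_{-1})$ and $\mathrm{im}(E_1 \to E_0)$ (with whichever index convention one prefers).

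Once $E_\bullet$ is in hand, apply the precovering hypothesis on $\mathcal D$ to obtain a $\mathcal D$-precover $f: D_\bullet \to E_\bullet$, and set $\phi := Z_0(f): Z_0(D_\bullet) \to Z_0(E_\bullet) \simeq M$.  Since $D_\bullet$ is acyclic, $Z_0(D_\bullet) \in \mathrm{Cyc}(\mathcal D)$, so $\phi$ is the candidate precover.  To verify the universal factorization property, pick any $C' \in \mathrm{Cyc}(\mathcal D)$, write $C' = Z_0(D'_\bullet)$ for some $D'_\bullet \in \mathcal D$, and let $g: C' \to M$ be arbitrary.  The crux is to lift $g$ to a chain map $\tilde g: D'_\bullet \to E_\bullet$ with $Z_0(\tilde g) = g$.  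Granted this, the precovering property of $f$ yields a chain map $h: D'_\bullet \to D_\bullet$ with $f \circ h = \tilde g$, and applying the functor $Z_0$ produces $\phi \circ Z_0(h) = g$, completing the verification that $\phi$ is a $\mathrm{Cyc}(\mathcal D)$-precover.

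I expect the main obstacle to be precisely the construction of $\tilde g$: one needs to extend $g$ degree by degree from $Z_0(D'_\bullet) \hookrightarrow D'_0$ into both positive and negative degrees while respecting the differentials on both sides, and this is a genuine homological lifting lemma that does \emph{not} hold for arbitrary acyclic $E_\bullet$.  The specific shape of $E_\bullet$ (injective entries on one side, projective on the other) and the acyclicity of $D'_\bullet$ both enter essentially: injectivity of the relevant $E_n$'s allows extension of partially defined maps along the monomorphisms $Z_n(D') \hookrightarrow D'_n$ coming from acyclicity of $D'_\bullet$, while projectivity on the other side permits the dual lifting along the surjections $D'_n \twoheadrightarrow Z_{n-1}(D')$.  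The precise inductive bookkeeping — in particular, checking at each stage that the cocycle identities needed for the next extension are automatic from the chain-map identities already established — is the technically delicate part; once this lifting lemma is in place, the remainder of the argument is formal functoriality of $Z_0$ together with a single application of the precovering property of $\mathcal D$.
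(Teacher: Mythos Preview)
Your overall strategy---choose a test complex $E_\bullet$ with $Z_0(E_\bullet)\simeq M$, take a $\mathcal D$-precover $f\colon D_\bullet\to E_\bullet$, and declare $Z_0(f)$ to be the candidate precover---is exactly the right shape, and is what the paper (following Yang--Liang) does.  The gap is in your specific choice of $E_\bullet$ and the lifting lemma you claim for it.

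If $E_\bullet$ has projective entries on the ``resolution'' side, the inductive construction of $\tilde g\colon D'_\bullet\to E_\bullet$ breaks down there.  At the first such step you must produce $\tilde g_{-1}\colon D'_{-1}\to E_{-1}$ with $d_{-1}^E\circ\tilde g_{-1}=\tilde g_0\circ d_{-1}^{D'}$.  The right-hand side is (via your already-built $\tilde g_0$) a map $D'_{-1}\to M\subseteq E_0$, and you need to lift it through the surjection $E_{-1}\twoheadrightarrow M$.  Projectivity of $E_{-1}$ is useless here: projectives lift maps \emph{out of} themselves along epimorphisms on the target, not maps \emph{into} themselves.  What you would actually need is that $D'_{-1}$ be projective, and nothing in the hypotheses gives that---$D'_\bullet$ is merely an acyclic complex in $\mathcal D$.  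Your sentence ``projectivity on the other side permits the dual lifting along the surjections $D'_n\twoheadrightarrow Z_{n-1}(D')$'' has the roles of source and target reversed.

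The repair is far simpler than your $E_\bullet$: as the paper indicates, take $E_\bullet$ to be the stalk complex $\cdots\to 0\to M\to 0\to\cdots$ with $M$ in the appropriate degree.  For an acyclic $D'_\bullet$, a one-line computation shows that a chain map $D'_\bullet\to E_\bullet$ is exactly a single module map $D'_k\to M$ vanishing on $\operatorname{im}(d_{k-1})=Z_k(D')$, hence (via $D'_k/Z_k(D')\simeq Z_{k+1}(D')$) a map from the relevant cycle module of $D'_\bullet$ to $M$.  So the ``crux'' lifting lemma is replaced by a trivial natural bijection, and no injective or projective hypotheses on $E_\bullet$ are needed.  The remainder of your argument---factor $\tilde g$ through the $\mathcal D$-precover $f$ and apply the cycle functor---then goes through verbatim.
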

\noindent The proof of Theorem \ref{thm_YangLiangDeconPrecover} is a purely algebraic proof, showing how given any module $M$, a $\mathcal{D}$-precover of the complex
\begin{tikzcd}
\dots \arrow[r]  & 0 \arrow[r] & M \arrow[r] & 0 \arrow[r] & \dots
\end{tikzcd}
in $\text{Ch}(R)$ can be used to define a $\text{Cyc}(\mathcal{D})$-precover of $M$ in $R$-Mod.  See \cite{MR3178060} for details.

Now we prove the Estrada-Iacob-P\'{e}rez theorem:
\begin{proof}
(of Theorem \ref{thm_EstradaEtAl}) By definition, $\mathcal{X} \text{-} \mathcal{GF} = \text{Cyc}\Big( \text{Ac}(\mathcal{C},\mathcal{F}) \Big)$, where:
\begin{itemize}
 \item $\mathcal{C}$ are the acyclic complexes with flat entries;
 \item $\mathcal{F}$ is the collection of functors of the form $X \otimes -$, for $X \in \mathcal{X}$.
\end{itemize}
We will prove that $\text{Ac}(\mathcal{C},\mathcal{F})$ is deconstructible; then by Theorem \ref{thm_DeconPrecover} it will be precovering (in $\text{Ch}(R)$), and finally by Theorem \ref{thm_YangLiangDeconPrecover}, $\text{Cyc}\big( \text{Ac}(\mathcal{C},\mathcal{F}) \big)$ (= $\mathcal{X}$-$\mathcal{GF}$) will be precovering in $R$-Mod.  

Suppose $F_\bullet \in \mathcal{C}$; i.e., $F_\bullet$ is an acyclic complex of flat modules.  For each $n \in \mathbb{Z}$ let $C_n:= \text{ker}(F_n \to F_{n+1})$ (the $n$-th cycle).  Then the following equivalences hold, via purely homological arguments, where $X^+$ denotes the character dual $\text{Hom}_{\mathbb{Z}}(X,\mathbb{Q}/\mathbb{Z})$:\footnote{The author thanks Jan \v{S}aroch for explaining these equivalences to him.}
\begin{align*}
& F_\bullet \in \text{Ac}(\mathcal{C},\mathcal{F}) &  \\
\iff & \forall X \in \mathcal{X} \ X \otimes F_\bullet  \text{ is acyclic} & \text{(by definition)} \\
\iff & \forall n \in \mathbb{Z} \ \forall X \in \mathcal{X} \ \text{Tor}\left( X, C_n \right)=0 & \text{(using flatness of the } F_n \text{'s)} \\
\iff & \forall n \in \mathbb{Z} \  \forall X \in \mathcal{X} \  \text{Ext}(C_n,X^+)=0  & \text{(by Ext-Tor relations, \cite{MR2985554} Lemma 2.16)} \\
\iff & \forall n \in \mathbb{Z} \ C_n \in {}^\perp \underbrace{\{ X^+ \ : \ X \in \mathcal{X}  \}}_{=: \mathcal{Y}}  & \text{(by definition)}. 
\end{align*}
By Eklof's Lemma \ref{lem_EklofLemma}, the class ${}^\perp \mathcal{Y}$ is filtration closed in $R$-Mod, and this easily implies (via the equivalences above) that $\text{Ac}(\mathcal{C},\mathcal{F})$ is filtration-closed in $\text{Ch}(R)$.  So it remains to show that $\text{Ac}(\mathcal{C},\mathcal{F})$ is \emph{weakly} $|R|^+$-deconstructible in $\text{Ch}(R)$.  Since character duals are pure-injective (Enochs-Jenda~\cite{MR1753146}, Proposition 5.3.7), the Eklof-Trlifaj Theorem \ref{thm_EklofTrlifaj} implies that 
\begin{equation}
{}^\perp \mathcal{Y} \text{ is } |R|^+ \text{-deconstructible in } R\text{-Mod}.
\end{equation}
Consider any $F_\bullet \in \text{Ac}(\mathcal{C},\mathcal{F})$ and assume
\[
F_\bullet \in \mathfrak{N} \prec (H_\theta,\in, \mathcal{Y} \cap H_\theta),
\]
where $\mathfrak{N} \cap |R|^+$ is transitive.  Now each $C_n=\text{ker} (F_n \to F_{n+1})$ is an element of $\mathfrak{N} \cap {}^\perp \mathcal{Y}$, so by deconstructibility of ${}^\perp \mathcal{Y}$ and Theorem \ref{thm_DeconChar}, 
\[
\forall n \in \mathbb{Z} \ \mathfrak{N} \cap C_n \in {}^\perp \mathcal{Y} \text{ and } \frac{C_n}{\mathfrak{N} \cap C_n} \in {}^\perp \mathcal{Y}.
\]
Moreover, the $n$-th cycles of $F_\bullet \restriction \mathfrak{N}$ and $F_\bullet / \mathfrak{N}$ are, respectively, $\mathfrak{N} \cap C_n$ and $\frac{C_n}{\mathfrak{N} \cap C_n}$.  So all cycles of $F_\bullet \restriction \mathfrak{N}$ and $F_\bullet / \mathfrak{N}$ lie in ${}^\perp \mathcal{Y}$, so by the equivalences above, both complexes are in $\text{Ac}(\mathcal{C},\mathcal{F})$.
\end{proof}

The next theorem, in conjunction with Theorem \ref{thm_TurnStatToClub}, will be convenient when trying to show that various classes in Gorenstein Homological Algebra are deconstructible.

\begin{theorem}\label{thm_GorComplexAEclosure}

Suppose $\mathcal{C}$ is a class of acyclic complexes in $\text{Ch}(R)$, $\mathcal{F}$ is a collection of functors from $\text{Ch}(R) \to \text{Ch}(\mathbb{Z})$, and:
\begin{enumerate}
 \item\label{item_BackgroundComplexAE} $\mathcal{C}$ is eventually a.e.\ closed under quotients in $\text{Ch}(R)$ (recall this holds if, for example, $\mathcal{C}$ is deconstructible);
 \item\label{item_F_actsExactlyC} For every short exact sequence of the form
 \[
 \begin{tikzcd}
\mathcal{E}: &  0_\bullet \arrow[r] & C_\bullet \restriction \mathfrak{N} \arrow[r, "\text{incl}"] & C_\bullet \restriction \mathfrak{N}' \arrow[r, "p"] & \frac{C_\bullet \restriction \mathfrak{N}'}{C_\bullet \restriction \mathfrak{N}} \arrow[r] & 0_\bullet
 \end{tikzcd}
 \]
 with $\{ C_\bullet, R \} \cup R \subset \mathfrak{N} \prec \mathfrak{N}' \prec (H_\theta,\in)$ and both $\mathfrak{N} \cap \kappa$ and $\mathfrak{N}' \cap \kappa$ transitive: \textbf{if}  all three inner entries of $\mathcal{E}$ are in $\mathcal{C}$, \textbf{then} each $F \in \mathcal{F}$ preserves the exactness of $\mathcal{E}$; i.e.
 \[
 \begin{tikzcd}
F(\mathcal{E}): &  F(0_\bullet) \arrow[r, no head] & F(C_\bullet \restriction \mathfrak{N}) \arrow[r, , no head, "F(\text{incl})"] & F(C_\bullet \restriction \mathfrak{N}') \arrow[r, no head,  "F(p)"] & F\left( \frac{C_\bullet \restriction \mathfrak{N}'}{C_\bullet \restriction \mathfrak{N}} \right) \arrow[r, no head] & F(0_\bullet)
 \end{tikzcd}
 \]
---with the direction of the arrows depending on whether $F$ is covariant or contravariant---is a short exact sequence in $\text{Ch}(\mathbb{Z})$. 
\end{enumerate}
Then $\text{Ac}(\mathcal{C},\mathcal{F})$ is eventually a.e.\ closed under quotients in $\text{Ch}(R)$.
\end{theorem}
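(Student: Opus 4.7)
The plan is to unpack the definition of eventual a.e.\ closure under quotients for the class $\text{Ac}(\mathcal{C},\mathcal{F})$, and to verify the required implication directly from the two hypotheses of the theorem combined with the 3-by-3 Lemma for complexes (Lemma \ref{lem_3_by_3_lemma}).

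First I would choose a regular $\kappa$ large enough that hypothesis \eqref{item_BackgroundComplexAE} witnesses eventual a.e.\ closure of $\mathcal{C}$ at $\kappa$, and large enough that hypothesis \eqref{item_F_actsExactlyC} applies at $\kappa$. Then I would fix $\mathfrak{N} \prec \mathfrak{N}' \prec (H_\theta,\in,R,\mathcal{C}\cap H_\theta,\mathcal{F}\cap H_\theta)$ with $\mathfrak{N}\cap\kappa$ and $\mathfrak{N}'\cap\kappa$ transitive, and fix $C_\bullet \in \mathfrak{N}$ with $C_\bullet$, $C_\bullet\restriction\mathfrak{N}$, $C_\bullet\restriction\mathfrak{N}'$ all in $\text{Ac}(\mathcal{C},\mathcal{F})$. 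Since $\text{Ac}(\mathcal{C},\mathcal{F})\subseteq\mathcal{C}$ by definition, all three complexes lie in $\mathcal{C}$, so eventual a.e.\ closure of $\mathcal{C}$ under quotients yields that $Q_\bullet := \frac{C_\bullet\restriction\mathfrak{N}'}{C_\bullet\restriction\mathfrak{N}} \in \mathcal{C}$. This disposes of the ``membership in $\mathcal{C}$'' part of the conclusion.

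Next I would verify that $F(Q_\bullet)$ is acyclic for every $F\in\mathcal{F}$. The natural short exact sequence
\[
\begin{tikzcd}
0_\bullet \arrow[r] & C_\bullet\restriction\mathfrak{N} \arrow[r,"\text{incl}"] & C_\bullet\restriction\mathfrak{N}' \arrow[r,"p"] & Q_\bullet \arrow[r] & 0_\bullet
\end{tikzcd}
\]
has all three inner terms in $\mathcal{C}$, so hypothesis \eqref{item_F_actsExactlyC} applies: applying any $F\in\mathcal{F}$ gives a short exact sequence in $\text{Ch}(\mathbb{Z})$ (with the direction dictated by the variance of $F$). By assumption, both $F(C_\bullet\restriction\mathfrak{N})$ and $F(C_\bullet\restriction\mathfrak{N}')$ are acyclic (since $C_\bullet\restriction\mathfrak{N},C_\bullet\restriction\mathfrak{N}'\in\text{Ac}(\mathcal{C},\mathcal{F})$). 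The 3-by-3 Lemma then forces the third term $F(Q_\bullet)$ to be acyclic as well. Hence $Q_\bullet\in\text{Ac}(\mathcal{C},\mathcal{F})$, which is exactly the required implication.

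No step seems to be a genuine obstacle; the content of the theorem is really that the two hypotheses are tailored precisely to let the 3-by-3 Lemma close the argument. The only place requiring any care is the bookkeeping around variance: when $F$ is contravariant the roles of the first and last terms in $F(\mathcal{E})$ are swapped, but the 3-by-3 Lemma is symmetric in the three complexes, so it applies either way. This variance bookkeeping, together with checking that one can choose a single $\kappa$ witnessing both hypothesis \eqref{item_BackgroundComplexAE} and hypothesis \eqref{item_F_actsExactlyC}, is the only non-trivial detail I anticipate.
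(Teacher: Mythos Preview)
Your proposal is correct and follows essentially the same approach as the paper: use hypothesis~\eqref{item_BackgroundComplexAE} to get the quotient into $\mathcal{C}$, then apply hypothesis~\eqref{item_F_actsExactlyC} to make $F(\mathcal{E})$ short exact, and finish with the 3-by-3 Lemma since two of its three terms are already acyclic. The only cosmetic difference is that the paper does not bother to include $\mathcal{F}\cap H_\theta$ in the structure (it is not needed, since hypothesis~\eqref{item_F_actsExactlyC} is a blanket assumption not requiring $F\in\mathfrak{N}$), but this is harmless.
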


\begin{proof}
This is essentially due to the 3-by-3 lemma.  Fix any regular $\kappa > |R|$ witnessing eventual a.e.\ closure of $\mathcal{C}$ under quotients.  Suppose 
\[
C_\bullet \in \mathfrak{N} \prec \mathfrak{N}' \prec (H_\theta,\in,\kappa,R,  \mathcal{C} \cap H_\theta ),
\]
$\mathfrak{N}$ and $\mathfrak{N}'$ both have transitive intersection with $\kappa$, and that 
\begin{equation}\label{eq_AssumptionsForQuotient}
\Big\{  C_\bullet, \ C_\bullet \restriction \mathfrak{N}, \ C_\bullet \restriction \mathfrak{N}'  \Big\} \subset \text{Ac}(\mathcal{C},\mathcal{F}).
\end{equation} 
We need to prove that $\frac{C_\bullet \restriction \mathfrak{N}'}{C_\bullet \restriction \mathfrak{N} } \in \text{Ac}(\mathcal{C},\mathcal{F})$.  Since $\mathcal{C}$ is $\kappa$-a.e.\ closed under quotients, and $\text{Ac}(\mathcal{C},\mathcal{F}) \subseteq \mathcal{C}$ by definition, \eqref{eq_AssumptionsForQuotient} implies that the complexes $C_\bullet \restriction \mathfrak{N}$, $C_\bullet \restriction \mathfrak{N}'$, and $\frac{C_\bullet \restriction \mathfrak{N}'}{C_\bullet \restriction \mathfrak{N} }$ are all in $\mathcal{C}$.  So it only remains to prove that $\frac{C_\bullet \restriction \mathfrak{N}'}{C_\bullet \restriction \mathfrak{N} }$ remains acyclic after applying any functor $F \in \mathcal{F}$.  Consider the short exact sequence
\begin{equation*}
\begin{tikzcd}
\mathcal{E}: & 0_\bullet \arrow[r] & C_\bullet \restriction \mathfrak{N} \arrow[r, "i"] & C_\bullet \restriction \mathfrak{N}' \arrow[r, "p"] & \frac{C_\bullet \restriction \mathfrak{N}'}{C_\bullet \restriction \mathfrak{N}} \arrow[r] & 0_\bullet
\end{tikzcd}
\end{equation*}
with $i$ the inclusion.  Since each of the three inner terms are, by assumption, in $\text{Ac}(\mathcal{C},\mathcal{F})$ and in particular in $\mathcal{C}$, assumption \eqref{item_F_actsExactlyC} ensures that \begin{equation*}
\begin{tikzcd}
F(\mathcal{E}): & 0_\bullet \arrow[r, no head] & F(C_\bullet \restriction \mathfrak{N}) \arrow[r, no head, "F(i)"] & F(C_\bullet \restriction \mathfrak{N}') \arrow[r, no head, "F(p)"] & F \left( \frac{C_\bullet \restriction \mathfrak{N}'}{C_\bullet \restriction \mathfrak{N}}\right) \arrow[r, no head] & 0_\bullet
\end{tikzcd}
\end{equation*}
is a short exact sequence in $\text{Ch}(\mathbb{Z})$ (with the direction of the arrows depending on whether $F$ is contravariant or covariant).

By assumption \eqref{eq_AssumptionsForQuotient}, both $C_\bullet \restriction \mathfrak{N}$ and $C_\bullet \restriction \mathfrak{N}'$ are in $\text{Ac}(\mathcal{C},\mathcal{F})$, so both $F(C_\bullet \restriction \mathfrak{N})$ and $F(C_\bullet \restriction \mathfrak{N}')$ are acyclic.  Then by the 3-by-3 lemma and exactness of $F(\mathcal{E})$, it follows that $F \left( \frac{C_\bullet \restriction \mathfrak{N}'}{C_\bullet \restriction \mathfrak{N}}\right)$ is acyclic.
\end{proof}

\begin{lemma}\label{lem_GorComplexesSatisfy}
The classes of complexes $\text{Ac}(\mathcal{C}_{\text{proj}}, \text{Hom}(-,\mathcal{X})  )$ and $\text{Ac}(\mathcal{C}_{\text{flat}}, \mathcal{X} \otimes -  )$---i.e., those used to define $\mathcal{X}$-$\mathcal{GP}$ and $\mathcal{X}$-$\mathcal{GF}$ (resp.) in \ref{item_Def_XGP} and \ref{item_Def_XGF} on page \pageref{item_Def_XGP}---satisfy the assumptions of Theorem \ref{thm_GorComplexAEclosure}.
\end{lemma}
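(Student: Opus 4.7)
The plan is to verify the two hypotheses of Theorem \ref{thm_GorComplexAEclosure} for the two pairs $(\mathcal{C}_{\text{proj}}, \text{Hom}(-,\mathcal{X}))$ and $(\mathcal{C}_{\text{flat}}, \mathcal{X} \otimes -)$. Throughout, I fix any regular $\kappa > |R|$, any $\mathfrak{N} \prec \mathfrak{N}' \prec (H_\theta,\in)$ containing $R$ as both element and subset, with $\mathfrak{N} \cap \kappa$ and $\mathfrak{N}' \cap \kappa$ transitive, and any $C_\bullet \in \mathfrak{N}$ that lies in the relevant background class. The crucial observation driving everything is that in each degree $n$, the cokernel $(\mathfrak{N}' \cap C_n)/(\mathfrak{N} \cap C_n)$ will turn out to be projective (resp.\ flat), and this single fact feeds both hypotheses.

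For hypothesis \ref{item_BackgroundComplexAE}, I must show the quotient complex $(C_\bullet \restriction \mathfrak{N}')/(C_\bullet \restriction \mathfrak{N})$ lies in $\mathcal{C}_{\text{proj}}$ (resp.\ $\mathcal{C}_{\text{flat}}$). Acyclicity is automatic: Lemma \ref{lem_RestrictComplex} shows that both $C_\bullet \restriction \mathfrak{N}$ and $C_\bullet \restriction \mathfrak{N}'$ are acyclic, and the 3-by-3 lemma (Lemma \ref{lem_3_by_3_lemma}) then delivers acyclicity of the quotient. For $\mathcal{C}_{\text{proj}}$, each $C_n$ is projective and in $\mathfrak{N}$, so I repeat the Kaplansky-style argument from the proof of Theorem \ref{thm_Kaplansky} at two levels: pick a free $F \in \mathfrak{N}$ and $X \in \mathfrak{N}$ with $F = C_n \oplus X$, so that $(\mathfrak{N}' \cap F)/(\mathfrak{N} \cap F)$ splits as $(\mathfrak{N}' \cap C_n)/(\mathfrak{N} \cap C_n) \oplus (\mathfrak{N}' \cap X)/(\mathfrak{N} \cap X)$. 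Fixing an $R$-basis $B \in \mathfrak{N}$ of $F$ and adapting Lemma \ref{lem_FreeTrace}, the left-hand side is isomorphic to $\langle (\mathfrak{N}' \cap B) \setminus \mathfrak{N} \rangle$, which is free; so the direct summand $(\mathfrak{N}' \cap C_n)/(\mathfrak{N} \cap C_n)$ is projective. For $\mathcal{C}_{\text{flat}}$, Lemma \ref{lem_ElemPureReflect} gives that both $\mathfrak{N} \cap C_n$ and $\mathfrak{N}' \cap C_n$ are pure submodules of $C_n$; a short unwinding of the purity criterion then shows $\mathfrak{N} \cap C_n$ is pure in $\mathfrak{N}' \cap C_n$, and Fact \ref{fact_FlatPure} delivers flatness of the quotient.

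For hypothesis \ref{item_F_actsExactlyC}, I analyze the degreewise SES $0 \to \mathfrak{N} \cap C_n \to \mathfrak{N}' \cap C_n \to (\mathfrak{N}' \cap C_n)/(\mathfrak{N} \cap C_n) \to 0$. In the projective case, the previous step shows the right-hand term is projective, so $\text{Ext}\big((\mathfrak{N}' \cap C_n)/(\mathfrak{N} \cap C_n),X\big)=0$ for every $X \in \mathcal{X}$; by the long exact sequence of Fact \ref{fact_long_exact}, applying $\text{Hom}(-,X)$ yields a SES at each degree, and these assemble into a SES of complexes in $\text{Ch}(\mathbb{Z})$. In the flat case, the right-hand term is flat, so $\text{Tor}(X,-)$ vanishes on it, and tensoring with any $X \in \mathcal{X}$ preserves degreewise exactness, hence complex-level exactness of the induced sequence.

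The only mildly delicate point is the two-model Kaplansky step underlying the ``projective quotient'' claim---verifying that the Lemma \ref{lem_FreeTrace} identification extends to compute $(\mathfrak{N}' \cap F)/(\mathfrak{N} \cap F)$ as the span of $(\mathfrak{N}' \cap B)\setminus \mathfrak{N}$. Everything else is a direct invocation of the reflection lemmas developed earlier and the standard vanishing criteria for $\text{Ext}$ and $\text{Tor}$.
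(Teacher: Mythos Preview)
Your argument is correct, and for hypothesis~\ref{item_F_actsExactlyC} it is essentially the same as the paper's: both use that the degreewise cokernels are projective (hence the sequence is degreewise split) in the $\mathcal{C}_{\text{proj}}$ case, and that the inclusion is degreewise pure (equivalently, the cokernel is flat) in the $\mathcal{C}_{\text{flat}}$ case.

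For hypothesis~\ref{item_BackgroundComplexAE}, however, you take a different route than the paper. You verify eventual a.e.\ closure under quotients directly, by redoing the Kaplansky argument (resp.\ the purity-plus-Fact~\ref{fact_FlatPure} argument) at the two-model level to see that each $(\mathfrak{N}' \cap C_n)/(\mathfrak{N} \cap C_n)$ is projective (resp.\ flat), and combining with the 3-by-3 lemma for acyclicity. The paper instead argues abstractly: since the classes of projective and flat modules are deconstructible in $R$-Mod, the classes $\mathcal{C}_{\text{proj}}$ and $\mathcal{C}_{\text{flat}}$ are deconstructible in $\text{Ch}(R)$ (via Theorems~\ref{thm_DeconChar} and~\ref{thm_DeconCharComplexes}), and then Lemma~\ref{lem_DeconImpliesClosure} delivers eventual a.e.\ closure under quotients for free. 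The paper's approach is shorter and leverages the machinery already built; your approach is more self-contained and avoids passing through deconstructibility of the complex classes, at the cost of redoing the two-model Kaplansky step (which, as you note, is the only place requiring any care).
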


\begin{proof}
Since the flat modules and projective modules are deconstructible in $R$-Mod, a routine use of Theorems \ref{thm_DeconCharComplexes} and \ref{thm_DeconChar} implies that the classes $\mathcal{C}_{\text{proj}}$ and $\mathcal{C}_{\text{flat}}$ are deconstructible in $\text{Ch}(R)$.  So by Lemma \ref{lem_DeconImpliesClosure}, they are eventually a.e.\ closed under quotients.  So both classes satisfy clause \ref{item_BackgroundComplexAE} of the assumptions of Theorem \ref{thm_GorComplexAEclosure}.

To see that $\text{Ac}(\mathcal{C}_{\text{proj}}, \text{Hom}(-,\mathcal{X})  )$ satisfies clause \ref{item_F_actsExactlyC} of Theorem \ref{thm_GorComplexAEclosure}, suppose $\kappa > |R|$, $P_\bullet$ is an acyclic complex of projective modules, $\{ C_\bullet, R \} \subset \mathfrak{N} \prec \mathfrak{N}' \prec (H_\theta,\in)$, and $\mathfrak{N}$ and $\mathfrak{N}'$ have transitive intersection with $\kappa$.  Consider the short exact sequence
\begin{equation*}
\begin{tikzcd}
\mathcal{E}: & 0_\bullet \arrow[r] & C_\bullet \restriction \mathfrak{N} \arrow[r, "i"] & C_\bullet \restriction \mathfrak{N}' \arrow[r, "p"] & \frac{C_\bullet \restriction \mathfrak{N}'}{C_\bullet \restriction \mathfrak{N}} \arrow[r] & 0_\bullet
\end{tikzcd}
\end{equation*}
with $i$ the inclusion.  By Theorem \ref{thm_Kaplansky}, for each $n \in \mathbb{Z}$, the module $\frac{\mathfrak{N}' \cap C_n}{\mathfrak{N} \cap C_n}$ is projective; so $\mathcal{E}$ is ``degreewise" split.  Then for any $X$ and any $n \in \mathbb{Z}$, $\text{Hom}(-,X)$ preserves exactness of $\mathcal{E}$ at degree $n$.  Hence, $\text{Hom}(\mathcal{E},X)$ is a short exact sequence in $\text{Ch}(\mathbb{Z})$.

That $\text{Ac}(\mathcal{C}_{\text{flat}}, \mathcal{X} \otimes -  )$ satisfies clause \ref{item_F_actsExactlyC} of Theorem \ref{thm_GorComplexAEclosure} follows from a similar argument, except one uses instead that $\mathcal{E}$ is pure in each degree (by Lemma \ref{lem_ElemPureReflect}), so that tensoring with any module preserves exactness.\footnote{Flatness of entries in $\mathcal{C}_{\text{flat}}$ are actually irrelevant here; if $\mathcal{C}$ is \emph{any} deconstructible class in $\text{Ch}(R)$, then $\text{Ac}(\mathcal{C}, \mathcal{X} \otimes -  )$ will be deconstructible in $\text{Ch}(R)$.}

\end{proof}

To prove Theorem \ref{thm_GP_ZFC}, we will need a variant of the Yang-Liang Theorem \ref{thm_YangLiangDeconPrecover}:
\begin{theorem}\label{thm_WeakDeconCycles}
Suppose $\mathcal{D}$ is a deconstructible class in $\text{Ch}(R)$.  Then $\text{Cyc}(\mathcal{D})$ is \textbf{weakly} deconstructible in $R$-Mod (but possibly not filtration-closed).
\end{theorem}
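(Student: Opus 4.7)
My plan is to apply the top-down characterization of deconstructibility for complexes (Theorem~\ref{thm_DeconCharComplexes}), particularly the two-model variant \ref{item_DiagVersion2models}, to build, for each $C\in\text{Cyc}(\mathcal{D})$, an explicit $\text{Cyc}(\mathcal{D})_\kappa$-filtration via traces on a continuous elementary chain. Fix $\kappa>|R|$ witnessing $\kappa$-deconstructibility of $\mathcal{D}$, and fix $C\in\text{Cyc}(\mathcal{D})$ with witness $D_\bullet\in\mathcal{D}$, so $C=\ker(D_0\to D_1)$; write $Z_0(X_\bullet):=\ker(X_0\to X_1)$ for brevity.  For a sufficiently large regular $\theta$, I would build a $\subseteq$-continuous chain $\langle\mathfrak{N}_\alpha:\alpha\le\lambda\rangle$ of elementary submodels of $(H_\theta,\in,R,\kappa,\mathcal{D}\cap H_\theta)$, each of size $<\kappa$ and having $\mathfrak{N}_\alpha\cap\kappa\in\kappa$, arranged so that $D_\bullet\in\mathfrak{N}_0$ and $D_n\subseteq\mathfrak{N}_\lambda$ for every $n\in\mathbb{Z}$ (standard, by Fact~\ref{fact_BasicFactsElemSub}).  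Setting $C^{(\alpha)}:=\mathfrak{N}_\alpha\cap C$, the proposed filtration is $0\subseteq C^{(0)}\subseteq C^{(1)}\subseteq\cdots\subseteq C^{(\lambda)}=C$; it is $\subseteq$-continuous because kernels commute with directed unions of submodules, and it exhausts $C$ because $D_0\subseteq\mathfrak{N}_\lambda$ forces $C\subseteq\mathfrak{N}_\lambda$.

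Two claims must then be verified.  First, each $C^{(\alpha)}$ lies in $\text{Cyc}(\mathcal{D})_\kappa$: indeed $C^{(\alpha)}=Z_0(D_\bullet\restriction\mathfrak{N}_\alpha)$, and Theorem~\ref{thm_DeconCharComplexes} gives $D_\bullet\restriction\mathfrak{N}_\alpha\in\mathcal{D}$, with $<\kappa$-presentability coming from $|R|<\kappa$ and $|\mathfrak{N}_\alpha|<\kappa$.  Second, each successive quotient $C^{(\alpha+1)}/C^{(\alpha)}$ lies in $\text{Cyc}(\mathcal{D})_\kappa$.  For this, set
\[
E_\bullet^{(\alpha)}:=\frac{D_\bullet\restriction\mathfrak{N}_{\alpha+1}}{D_\bullet\restriction\mathfrak{N}_\alpha};
\]
by the two-model clause \ref{item_DiagVersion2models}, $E_\bullet^{(\alpha)}\in\mathcal{D}$, and it is $<\kappa$-presented.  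I claim that the canonical map $C^{(\alpha+1)}/C^{(\alpha)}\to Z_0\bigl(E_\bullet^{(\alpha)}\bigr)$ obtained by applying $Z_0$ to the short exact sequence $0\to D_\bullet\restriction\mathfrak{N}_\alpha\to D_\bullet\restriction\mathfrak{N}_{\alpha+1}\to E_\bullet^{(\alpha)}\to 0$ is an isomorphism; this would place the quotient inside $\text{Cyc}(\mathcal{D})_\kappa$ and finish the proof.

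Injectivity of the comparison map is automatic from left-exactness of $Z_0$; the entire weight of the argument falls on surjectivity, which I expect to be the only substantive step.  Given a class $[d]\in Z_0(E_\bullet^{(\alpha)})$ represented by $d\in\mathfrak{N}_{\alpha+1}\cap D_0$ with $\partial(d)\in\mathfrak{N}_\alpha\cap D_1$, the key observation is that $\partial(d)$ tautologically lies in the image of $\partial\colon D_0\to D_1$, and this image, together with $D_0$, $D_1$, $\partial$, and $\partial(d)$ itself, is visible to $\mathfrak{N}_\alpha$.  Elementarity of $\mathfrak{N}_\alpha$ therefore supplies $x\in\mathfrak{N}_\alpha\cap D_0$ with $\partial(x)=\partial(d)$, and then $c:=d-x$ lies in $\mathfrak{N}_{\alpha+1}\cap D_0$ with $\partial(c)=0$, so $c\in C^{(\alpha+1)}$ represents $[d]$.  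Crucially no acyclicity of $D_\bullet$ is used: cycle-lifting succeeds precisely because we are quotienting by an \emph{elementary} trace rather than an arbitrary subcomplex.  The remaining details (continuity at limits, the existence of the elementary chain with the listed closure properties, and simply prepending $0$ in case $\mathfrak{N}_0\cap C\ne 0$) are routine set-theoretic bookkeeping.
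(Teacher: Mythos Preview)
Your surjectivity argument---the cycle-lifting via elementarity of $\mathfrak{N}_\alpha$---is exactly the heart of the matter, and it is correct.  The paper's proof rests on the same observation: it asserts that the $0$-th cycle of $D_\bullet/\mathfrak{N}$ equals $C/(\mathfrak{N}\cap C)$, which is precisely your surjectivity claim in the special case $\mathfrak{N}'=H_\theta$.

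However, your explicit chain construction has a gap.  You cannot in general build a $\subseteq$-continuous chain $\langle\mathfrak{N}_\alpha:\alpha\le\lambda\rangle$ with \emph{each} $|\mathfrak{N}_\alpha|<\kappa$ whose union contains all of $D_0$.  If $|D_0|>\kappa$ then the chain must have length $>\kappa$, and at any limit $\beta<\lambda$ of cofinality $\ge\kappa$ continuity forces $|\mathfrak{N}_\beta|\ge\kappa$.  Your appeal to Fact~\ref{fact_BasicFactsElemSub} does not cover this, and it is not ``routine bookkeeping'': the standard repair is to argue by induction on $|C|$, at each stage $\lambda\ge\kappa$ building a chain of models of size $<\lambda$ (not $<\kappa$) and invoking the inductive hypothesis on the successive quotients---exactly as in the appendix proof of \ref{item_MainChar}~$\Rightarrow$~\ref{item_KappaDecon}.  (Relatedly, your first claim that \emph{every} $C^{(\alpha)}$ is $<\kappa$-presented is both unnecessary and, for the same reason, unattainable; only $C^{(0)}$ and the successive quotients need to be small.)

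The paper sidesteps this entirely.  Rather than building the filtration by hand, it verifies clause~\ref{item_non_diag} of Theorem~\ref{thm_DeconChar} for a \emph{single} $\mathfrak{N}$ of arbitrary size: it shows that both $\mathfrak{N}\cap C=Z_0(D_\bullet\restriction\mathfrak{N})$ and $C/(\mathfrak{N}\cap C)=Z_0(D_\bullet/\mathfrak{N})$ lie in $\text{Cyc}(\mathcal{D})$, then invokes Remark~\ref{rem_DontNeedFiltClosure} to conclude weak $\kappa$-deconstructibility.  All the size-induction is thereby absorbed into the already-proved implication \ref{item_non_diag}~$\Rightarrow$~\ref{item_KappaDecon}.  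Your approach, once repaired, essentially re-derives that implication in this special case; the paper's route is shorter precisely because it reuses the machinery.
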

\begin{proof}
Say $\kappa > |R|$ witnesses deconstructibility of $\mathcal{D}$.  Then by Theorem \ref{thm_DeconCharComplexes}, 
\[
D_\bullet \in \mathfrak{N} \cap \mathcal{D} \ \implies \left( D_\bullet \restriction \mathfrak{N} \in \mathcal{D} \text{ and } D_\bullet / \mathfrak{N} \in \mathcal{D} \right)
\]
whenever $\mathfrak{N} \prec (H_\theta,\in,\kappa, \mathcal{D} \cap H_\theta)$ and $\mathfrak{N} \cap \kappa$ is transitive.

To prove weak deconstructibility of $\text{Cyc}(\mathcal{D})$, we use the \ref{item_non_diag} $\implies$ \ref{item_KappaDecon} direction of Theorem \ref{thm_DeconChar} (which, by Remark \ref{rem_DontNeedFiltClosure}, does \emph{not} need filtration-closure).  Consider any $C \in \text{Cyc}(\mathcal{D})$, and fix a $D_\bullet \in \mathcal{D}$ such that $C = \text{Ker} (D_0 \to D_1)$.  Fix a $\theta$ such that $C,D_\bullet \in H_\theta$.  Set
\[
\mathfrak{A}:=(H_\theta,\in,\kappa,C,D_\bullet, \mathcal{D} \cap H_\theta)
\]
and consider any $\mathfrak{N}  \prec \mathfrak{A}$ whose intersection with $\kappa$ is transitive.  Then $D_\bullet \restriction \mathfrak{N}$ and $D_\bullet / \mathfrak{N}$ are both in $\mathcal{D}$, and hence their respective 0-th cycles are in $\text{Cyc}(\mathcal{D})$.  The 0th cycle of $D_\bullet \restriction \mathfrak{N}$ is $\mathfrak{N} \cap C$, and the 0-th cycle of $D_\bullet / \mathfrak{N}$ is $\frac{C}{\mathfrak{N} \cap C}$.
\end{proof}

Finally, we prove:
\begin{proof}
(of Theorem \ref{thm_GP_ZFC})  Assume $\lambda$ is an infinite regular cardinal and $\mathcal{X}$ is a class of $\lambda$-pure-injective modules.  By definition, 
\[
\mathcal{X} \text{-} \mathcal{GP} = \text{Cyc}\Big( \text{Ac}(\mathcal{C},\mathcal{F}) \Big)
\]
where:
\begin{itemize}
 \item $\mathcal{C}$ are the acyclic complexes with projective entries;
 \item $\mathcal{F}$ is the collection of functors of the form $\text{Hom}(-, X)$, for $X \in \mathcal{X}$.
\end{itemize}
The class $\mathcal{X}$-$\mathcal{GP}$ is known to be filtration-closed; the homological argument from Enochs-Iacob-Overtoun~\cite{MR2342674} Theorem 3.2, for the case $\mathcal{X} = \text{Proj}$, works for any $\mathcal{X}$.  So it remains to show that $\mathcal{X}$-$\mathcal{GP}$ ($= \text{Cyc}\big( \text{Ac}(\mathcal{C},\mathcal{F} \big)$) is weakly deconstructible, and by Theorem \ref{thm_WeakDeconCycles} it suffices to prove that $\text{Ac}(\mathcal{C},\mathcal{F})$ is deconstructible in $\text{Ch}(R)$.  Filtration-closure of $\text{Ac}(\mathcal{C},\mathcal{F})$ follows from the proof of Theorem 2.6 of Enochs-Iacob-Overtoun~\cite{MR2342674} argument, so it only remains to show weak deconstructibility of $\text{Ac}(\mathcal{C},\mathcal{F})$ in $\text{Ch}(R)$. 

Fix a regular $\kappa$ such that $\kappa^{<\lambda} = \kappa$.  By Theorem \ref{thm_GorComplexAEclosure} and Lemma \ref{lem_GorComplexesSatisfy}, $\text{Ac}(\mathcal{C},\mathcal{F})$ is eventually a.e.\ closed under quotients.  So by Theorem \ref{thm_TurnStatToClub}, it suffices to find a \textbf{stationary} set of $\mathfrak{N} \in \wp_\kappa(H_\theta)$ such that $\mathfrak{N}$ reflects membership in $\text{Ac}(\mathcal{C},\mathcal{F})$; i.e., such that 
\[
P_\bullet \in \mathfrak{N} \cap \text{Ac}\big(\mathcal{C},\mathcal{F} \big) \ \implies \ P_\bullet \restriction \mathfrak{N} \in \text{Ac}\big(\mathcal{C},\mathcal{F} \big).
\]
We will show this holds for any $<\lambda$-closed $\mathfrak{N}$, which constitutes a stationary set by Fact \ref{fact_BasicFactsElemSub}.

For any acyclic complex $P_\bullet$ of projective modules with cycles $C_n:= \text{ker}(P_n \to P_{n+1})$, we have the following equivalences:
\begin{align*}
& P_\bullet \in \text{Ac}(\mathcal{C},\mathcal{F}) & \\
\iff & \forall X \in \mathcal{X} \ \text{Hom}(P_\bullet,X) \text{ is acyclic} & \text{(by definition)} \\ 
\iff &  \forall X \in \mathcal{X} \ \forall n \in \mathbb{Z} \ C_n \in {}^\perp X & \text{(using projectivity of the } P_n \text{'s)} \\
\iff &  \forall n \in \mathbb{Z} \ C_n \in {}^\perp \mathcal{X} & \text{(by definition)}.  
\end{align*}

Consider any $<\lambda$-closed $\mathfrak{N} \prec (H_\theta,\in,\mathcal{X} \cap H_\theta)$ whose intersection with $\kappa$ is transitive, and suppose $P_\bullet \in \mathfrak{N}$ is a member of $\text{Ac}(\mathcal{C},\mathcal{F})$.  Then $C_n \in \mathfrak{N} \cap {}^\perp \mathcal{X}$ for all $n \in \mathbb{Z}$.  Since $\mathcal{X}$ consists of $\lambda$-pure-injective modules, and by $<\lambda$-closure of $\mathfrak{N}$,  Lemma \ref{lem_LambdaClosedLeftPerp} ensures that 
\[
\forall n \in \mathbb{Z} \ \underbrace{\mathfrak{N} \cap C_n}_{\text{ the } n \text{-th cycle of } C_\bullet \restriction \mathfrak{N}} \in {}^\perp \mathcal{X}  
\]
(Lemma \ref{lem_LambdaClosedLeftPerp} also tells us $\frac{C_n}{\mathfrak{N} \cap C_n} \in {}^\perp \mathcal{X}$, but we don't need that here, because we only need to check the assumptions of Theorem \ref{thm_TurnStatToClub}).  Also, $P_\bullet \restriction \mathfrak{N}$ consists of projective modules, by Kaplansky's Theorem \ref{thm_Kaplansky}.  So by the equivalences above, $P_\bullet \restriction \mathfrak{N} \in \text{Ac}(\mathcal{C},\mathcal{F})$.
\end{proof}

\begin{remark}
The proof does not actually need projectivity of the complexes.  The argument works as long as $\mathcal{C}$ is a deconstructible class of complexes with entries from ${}^\perp \mathcal{X}$.
\end{remark}

\subsubsection{Maximum Deconstructibility and Gorenstein Homological Algebra}\label{sec_MaxDecon}

\begin{theorem}[Cort{\'e}s-Izurdiaga and \v{S}aroch \cite{cortes2023module}]\label{thm_CS_LC_GP}
If there is a proper class of strongly compact cardinals, then over every ring, the class of Gorenstein Projective modules is deconstructible.
\end{theorem}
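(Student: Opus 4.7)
The plan is to derive Theorem \ref{thm_CS_LC_GP} as a direct consequence of the ZFC result Theorem \ref{thm_GP_ZFC}. By definition the ordinary Gorenstein Projective class satisfies $\mathcal{GP} = \mathrm{Proj}\text{-}\mathcal{GP}$, where $\mathrm{Proj}$ denotes the class of projective $R$-modules. Hence by Theorem \ref{thm_GP_ZFC}, it suffices to exhibit a single infinite regular cardinal $\lambda$ such that every projective $R$-module is $\lambda$-pure-injective.

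To this end, using the proper-class hypothesis, I would fix a strongly compact cardinal $\kappa > |R|$. The key claim driving the proof is then:
\begin{quote}
\emph{If $\kappa$ is strongly compact and $|R|<\kappa$, then every $R$-module is $\kappa$-pure-injective.}
\end{quote}
Granting this claim, projective modules are in particular $\kappa$-pure-injective, so Theorem \ref{thm_GP_ZFC} applied with $\lambda=\kappa$ and $\mathcal{X} = \mathrm{Proj}$ immediately yields deconstructibility of $\mathcal{GP}$.

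The main obstacle---and the only place where strong compactness genuinely enters---is the claim above. The standard argument exploits the filter-extension characterization of strong compactness: every $\kappa$-complete filter on any set extends to a $\kappa$-complete ultrafilter. Given a $\kappa$-pure inclusion $B\hookrightarrow C$ and a homomorphism $f:B\to M$ into an arbitrary module $M$, for each $<\kappa$-sized subset $X\subseteq C$ one uses $\kappa$-purity to produce a partial extension $f_X:\langle B\cup X\rangle_R \to M$ (by solving the relevant system of $<\kappa$ equations inside $B$ and then transporting the solutions into $M$ via $f$); the family $\{f_X\}_X$ generates a $\kappa$-complete filter on a suitable index set whose extension to a $\kappa$-complete ultrafilter $\mathcal{U}$ assembles the $f_X$'s into the desired global extension $\bar f:C\to M$ via an ultraproduct limit. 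In an exposition one would most likely simply cite this lemma from \cite{cortes2023module} (or the analogous statement in the accessible-category literature, e.g.\ results of Lieberman--Rosický--Vasey on purity at strongly compact cardinals), reducing the proof of Theorem \ref{thm_CS_LC_GP} to a one-line invocation of Theorem \ref{thm_GP_ZFC}.
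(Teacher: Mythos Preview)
The paper does not itself prove Theorem \ref{thm_CS_LC_GP}; it attributes the result to \cite{cortes2023module} and notes only that the author independently obtained the same conclusion under the stronger hypothesis of a proper class of supercompact cardinals, via the quite different Maximum Deconstructibility route described around Theorem \ref{thm_VP_MaxDecon}.

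Your derivation is correct and is essentially the argument of \cite{cortes2023module}, as one can infer from the paper's remark after Theorem \ref{thm_GP_ZFC} that the latter ``modestly strengthens a result from Cort{\'e}s-Izurdiaga and \v{S}aroch [\ldots] who proved it under the additional assumption that $\mathcal{X}$ contains all the projective modules.'' Their proof of Theorem \ref{thm_CS_LC_GP} is exactly to apply (their version of) Theorem \ref{thm_GP_ZFC} with $\mathcal{X}=\mathrm{Proj}$ after establishing the key lemma you isolate: for strongly compact $\kappa>|R|$, every $R$-module is $\kappa$-pure-injective (equivalently, every $\kappa$-pure short exact sequence of $R$-modules splits). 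So your reduction is faithful to the original source; the paper's own contribution in this vicinity is Theorem \ref{thm_GP_ZFC} rather than a new proof of Theorem \ref{thm_CS_LC_GP}.
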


The author independently proved the same conclusion, but under the stronger assumption of a proper class of supercompact cardinals (Cox~\cite{Cox_MaxDecon}).  The proofs from \cite{cortes2023module} and \cite{Cox_MaxDecon} are very different, and generalize in different ways; see the introduction of \cite{Cox_MaxDecon} for a discussion.  The argument from \cite{Cox_MaxDecon} uses (of course) elementary submodel methods, and generalizes to show:
\begin{theorem}[\cite{Cox_MaxDecon}]\label{thm_VP_XGP}
Vop\v{e}nka's Principle implies that all classes of the form $\mathcal{X}$-$\mathcal{GP}$ are deconstructible.
\end{theorem}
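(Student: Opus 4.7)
The plan is to follow the blueprint of Theorem \ref{thm_GP_ZFC}, using Vopěnka's Principle in place of the $\lambda$-pure-injectivity assumption on $\mathcal{X}$. Fix an arbitrary class $\mathcal{X}$ of $R$-modules. As in the proof of Theorem \ref{thm_GP_ZFC}, the class $\mathcal{X}$-$\mathcal{GP} = \text{Cyc}(\text{Ac}(\mathcal{C}_{\text{proj}}, \text{Hom}(-,\mathcal{X})))$ is filtration-closed by the Enochs-Iacob-Overtoun argument from \cite{MR2342674} (which makes no use of the specific choice of $\mathcal{X}$), so by Theorem \ref{thm_WeakDeconCycles} it suffices to show that $\text{Ac}(\mathcal{C}_{\text{proj}}, \text{Hom}(-,\mathcal{X}))$ is deconstructible in $\text{Ch}(R)$. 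This class is eventually a.e.\ closed under quotients by Theorem \ref{thm_GorComplexAEclosure} and Lemma \ref{lem_GorComplexesSatisfy}, so Theorem \ref{thm_TurnStatToClub} reduces the problem to exhibiting, for some sufficiently large regular $\kappa$, a stationary set of $\mathfrak{N} \in \wp_\kappa(H_\theta)$ that reflects membership in this complex class.

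Replaying the equivalence chain from the proof of Theorem \ref{thm_GP_ZFC}---and using Kaplansky's Theorem \ref{thm_Kaplansky} to see that $\mathfrak{N} \cap P_n$ remains projective whenever $P_n$ is---this complex-level reflection reduces to the purely module-level implication
\[
C \in \mathfrak{N} \cap {}^\perp \mathcal{X} \ \implies \ \mathfrak{N} \cap C \in {}^\perp \mathcal{X}.
\]
Hence it suffices to prove, under VP, that ${}^\perp \mathcal{X}$ is a deconstructible class of $R$-modules for every class $\mathcal{X}$. Since ${}^\perp \mathcal{X}$ is filtration-closed by Eklof's Lemma \ref{lem_EklofLemma}, by Theorem \ref{thm_DeconChar} this reduces further to verifying clause \ref{item_MainChar} for ${}^\perp \mathcal{X}$; once this is in hand, club-many $\mathfrak{N}$ with the desired reflection are automatic and the proof is complete.

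The main obstacle, and the place where Vopěnka's Principle is actually invoked, is the deconstructibility of ${}^\perp \mathcal{X}$ for a possibly proper-class-sized $\mathcal{X}$. The plan is to argue by contradiction: suppose that clause \ref{item_MainChar} of Theorem \ref{thm_DeconChar} fails for ${}^\perp \mathcal{X}$ at every sufficiently large regular $\kappa$. Then for each such $\kappa$ one can select a failure tuple $(\mathfrak{N}_\kappa, C_\kappa, X_\kappa, F_\kappa, K_\kappa, \varphi_\kappa)$ with $\mathfrak{N}_\kappa \prec (H_{\theta_\kappa}, \in, R, \mathcal{X} \cap H_{\theta_\kappa})$ having $\mathfrak{N}_\kappa \cap \kappa$ transitive, $C_\kappa = F_\kappa/K_\kappa \in \mathfrak{N}_\kappa \cap {}^\perp \mathcal{X}$ with $F_\kappa$ free, $X_\kappa \in \mathcal{X}$, and $\varphi_\kappa : \mathfrak{N}_\kappa \cap K_\kappa \to X_\kappa$ a homomorphism that does not extend to $\mathfrak{N}_\kappa \cap F_\kappa$. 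I would then encode each such failure as a first-order structure $\mathfrak{S}_\kappa$ in a fixed countable signature, carrying $X_\kappa$, the modules $\mathfrak{N}_\kappa \cap F_\kappa$ and $\mathfrak{N}_\kappa \cap K_\kappa$, the scalar action of $R$, the map $\varphi_\kappa$, and predicates witnessing non-extension; these can be arranged so that the $\mathfrak{S}_\kappa$ form a proper class of pairwise non-isomorphic structures (e.g.\ by controlling the cardinality of the carrier set). Applying VP then yields an elementary embedding $j : \mathfrak{S}_{\kappa_1} \to \mathfrak{S}_{\kappa_2}$ for some $\kappa_1 < \kappa_2$, and the sought contradiction would come by showing that $j$ transports the internal non-extension witness at level $\kappa_1$ into a first-order statement inside $\mathfrak{S}_{\kappa_2}$ that is incompatible with $C_{\kappa_2} \in {}^\perp \mathcal{X}$ (and the concrete extension of morphisms the latter guarantees). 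The delicate part is designing the signature and the auxiliary predicates so that the obstruction to extending $\varphi_\kappa$ is truly first-order, uniform in $\kappa$, and preserved by $j$; the proper-class parameter $\mathcal{X}$ must be absorbed into the structure in a way that respects elementarity.
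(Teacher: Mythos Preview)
Your reduction is sound up to the point where you assert that it suffices to prove ${}^\perp \mathcal{X}$ is deconstructible under Vop\v{e}nka's Principle. That is an overreach, and it is where the gap lies. Whether VP alone implies that every root of Ext is deconstructible is essentially Salce's Problem (Section~\ref{sec_Salce}); Theorem~\ref{thm_Cox_Salce} shows this is only known to be \emph{consistent} relative to VP (using additional combinatorics such as Diamond), not an outright consequence of VP. Your direct VP-by-contradiction sketch is correspondingly incomplete: you correctly flag the ``delicate part''---making the non-extension of $\varphi_\kappa$ genuinely first-order and preserved under the embedding $j$---but you do not carry it out, and there is no reason to expect it can be done.

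The paper sidesteps this entirely by never passing through deconstructibility of ${}^\perp \mathcal{X}$. Instead it invokes \emph{Maximum Deconstructibility} (Theorem~\ref{thm_VP_MaxDecon}) directly on the complex class $\text{Ac}(\mathcal{C}_{\text{proj}}, \text{Hom}(-,\mathcal{X}))$. The crucial point is that this complex class \emph{is} eventually a.e.\ closed under quotients (Lemma~\ref{lem_GorComplexesSatisfy}), which is exactly the hypothesis Maximum Deconstructibility needs---whereas ${}^\perp \mathcal{X}$ is not known to have this property. What VP actually supplies (see the paragraph following Theorem~\ref{thm_VP_MaxDecon}) is, for any isomorphism-closed class, stationarily many $\mathfrak{N} \in \wp^*_\kappa(H_\theta)$ reflecting membership in that class. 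You only needed this stationary reflection (for the complex class, or equivalently via your correct equivalence-chain reduction for ${}^\perp \mathcal{X}$); Theorem~\ref{thm_TurnStatToClub} then finishes because the \emph{complex} class is eventually a.e.\ closed under quotients. Your error was trying to upgrade stationary reflection for ${}^\perp \mathcal{X}$ to clause~\ref{item_MainChar} for ${}^\perp \mathcal{X}$, which would require eventual a.e.\ quotient-closure of ${}^\perp \mathcal{X}$ itself---something you do not have.
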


Theorem \ref{thm_VP_XGP} follows from an even more powerful theorem.  Recall by Lemma \ref{lem_DeconImpliesClosure} that for any class $\mathcal{C}$ of modules or complexes that is filtration-closed,
\begin{align*}
\mathcal{C} \text{ is deconstructible } \implies \ \mathcal{C} \text{ is eventually a.e. closed under quotients.}
\end{align*}
Under Vop\v{e}nka's Principle, the converse holds:

\begin{theorem}[Cox~\cite{Cox_MaxDecon}]\label{thm_VP_MaxDecon}
Vop\v{e}nka's Principle implies the \textbf{Maximum Deconstructibility} principle, which asserts that for any class $\mathcal{C}$ of modules or complexes:  if $\mathcal{C}$ is filtration-closed and eventually a.e.\ closed under quotients, then it is deconstructible.
\end{theorem}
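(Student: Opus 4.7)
The approach is to argue by contradiction using Vop\v{e}nka's Principle (VP) together with Theorem~\ref{thm_TurnStatToClub}. Suppose $\mathcal{C}$ is filtration-closed and $\kappa_0$-a.e.\ closed under quotients for some regular $\kappa_0 > |R|$, yet $\mathcal{C}$ is not deconstructible. Then, by Theorem~\ref{thm_TurnStatToClub}, for every regular $\kappa \geq \kappa_0$ there must exist a module (or complex) $C_\kappa \in \mathcal{C}$ for which the set of reflecting submodels $\{\mathfrak{N} \in \wp_\kappa(H_\theta) : C_\kappa \in \mathfrak{N},\ \mathfrak{N} \cap \kappa \text{ transitive},\ \mathfrak{N} \cap C_\kappa \in \mathcal{C}\}$ is non-stationary in $\wp_\kappa(H_\theta)$ for all sufficiently large regular $\theta$. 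Equivalently, a club of $<\kappa$-sized elementary submodels fails to reflect $\mathcal{C}$-membership for $C_\kappa$.

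Next I would package these failures as a proper class of first-order structures in a common finite signature. For each regular $\kappa \geq \kappa_0$ and suitably chosen regular $\theta_\kappa$, form
\[
\mathfrak{A}_\kappa := \bigl(H_{\theta_\kappa},\ \in,\ R,\ \kappa,\ C_\kappa,\ \mathcal{C} \cap H_{\theta_\kappa}\bigr).
\]
The collection $\{\mathfrak{A}_\kappa\}$ is a proper class of structures in a single fixed signature, so VP yields two regular cardinals $\kappa < \kappa'$ together with a non-trivial elementary embedding $j : \mathfrak{A}_\kappa \to \mathfrak{A}_{\kappa'}$. Set $\mathfrak{N} := j[H_{\theta_\kappa}]$; this is an elementary submodel of $\mathfrak{A}_{\kappa'}$ of cardinality strictly less than $\kappa'$, and by elementarity it contains $j(C_\kappa)$.

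The final step is to obtain a contradiction by showing $\mathfrak{N}$ actually reflects $\mathcal{C}$-membership for $j(C_\kappa)$. Since $|R| < \kappa_0 \leq \mathrm{crit}(j)$, the embedding $j$ fixes $R$ pointwise, so $j \restriction C_\kappa$ is an $R$-module isomorphism from $C_\kappa$ onto $j[C_\kappa]$; and a direct computation gives $\mathfrak{N} \cap j(C_\kappa) = j[C_\kappa]$, hence $\mathfrak{N} \cap j(C_\kappa) \cong C_\kappa \in \mathcal{C}$. Provided one can arrange $j[\kappa] = \kappa$ (so that $\mathfrak{N} \cap \kappa'$ is transitive), then $\mathfrak{N}$ is a witness to the reflection property at scale $\kappa'$ for $j(C_\kappa)$. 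But by elementarity of $j$ applied to the predicate encoding $\mathcal{C} \cap H_\theta$ in the $\mathfrak{A}_\kappa$'s, the statement ``the set of reflecting submodels for my distinguished module is non-stationary'' transfers from $\mathfrak{A}_\kappa$ to $\mathfrak{A}_{\kappa'}$, giving non-stationarity of reflecting submodels for $j(C_\kappa)$ at scale $\kappa'$. This yields the desired contradiction, and combined with Theorem~\ref{thm_TurnStatToClub} this shows $\mathcal{C}$ is deconstructible.

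The main obstacle is precisely the arrangement of critical points: ensuring that $\mathrm{crit}(j) = \kappa$, so that $j[\kappa] = \kappa$ is an initial segment of $\kappa'$ (and thus $\mathfrak{N} \cap \kappa'$ is transitive). This needs to be secured by a careful choice of the signature of the $\mathfrak{A}_\kappa$'s, or by restricting the proper class of structures to one for which VP produces embeddings with critical points of the desired form; this is where the large-cardinal strength of VP is genuinely used.
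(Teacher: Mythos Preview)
Your overall strategy---reduce to Theorem~\ref{thm_TurnStatToClub} and then invoke VP to produce reflecting elementary submodels---matches the paper's sketch. But the contradiction you describe does not close. You produce a \emph{single} $\mathfrak{N} = j[H_{\theta_\kappa}]$ with $\mathfrak{N} \cap j(C_\kappa) \cong C_\kappa \in \mathcal{C}$, and then assert this contradicts the elementarily-transferred statement ``reflecting submodels for $j(C_\kappa)$ are non-stationary in $\wp_{\kappa'}(H_{\theta_{\kappa'}})$.'' It does not: a non-stationary set can certainly be nonempty. What you need is that $\mathfrak{N}$ lies in the specific club witnessing non-stationarity. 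The standard fix is to put a Skolem-type function $F_\kappa : [H_{\theta_\kappa}]^{<\omega} \to H_{\theta_\kappa}$ generating that club into the signature of $\mathfrak{A}_\kappa$; then $\mathfrak{N} \prec \mathfrak{A}_{\kappa'}$ is automatically closed under $j(F_\kappa)$, hence lies in the club $j(E_\kappa)$, and so by the transferred property $\mathfrak{N}$ should \emph{fail} to reflect---which now genuinely contradicts $\mathfrak{N} \cap j(C_\kappa) \in \mathcal{C}$.

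The obstacle you flag at the end (arranging $\mathrm{crit}(j) = \kappa$ and $|H_{\theta_\kappa}| < \kappa'$) is real and not merely cosmetic; raw VP only hands you \emph{some} embedding between \emph{some} two structures, with no control over critical points or relative sizes. One either thins the proper class so that the structures are rigid below their index cardinal (forcing $\mathrm{crit}(j) \ge \kappa$) and have sizes that are mutually separated, or one appeals to a known reformulation of VP in terms of a proper class of suitable extendible-type cardinals. The paper's sketch takes the latter, direct route: it simply asserts that VP yields, for many $\kappa$, \emph{stationarily many} $\mathfrak{N} \in \wp^*_\kappa(H_\theta)$ reflecting $\mathcal{C}$ (for all $C \in \mathfrak{N} \cap \mathcal{C}$ simultaneously), and then feeds this directly into Theorem~\ref{thm_TurnStatToClub}. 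Your contradiction argument, once repaired with the club function, is essentially the contrapositive of that same step.
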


We refer to \cite{Cox_MaxDecon} for the proof, but roughly: given any (isomorphism-closed and filtration-closed) class $\mathcal{C}$, Vop\v{e}nka's Principle provides many regular $\kappa$ such that for a proper class of $\theta$ there are \textbf{stationarily many} $\mathfrak{N} \in \wp^*_\kappa(H_\theta)$ such that for any $C$,
\[
C \in \mathfrak{N} \cap \mathcal{C} \ \implies \  \mathfrak{N} \cap C \in \mathcal{C}.
\]
But this doesn't quite satisfy clause \ref{item_MainChar} of Theorem \ref{thm_DeconChar},\footnote{Or of Theorem \ref{thm_DeconCharComplexes} in the case where $\mathcal{C}$ is a class of complexes.} because we only have it for \textbf{stationarily many} $\mathfrak{N} \in \wp_\kappa(H_\theta)$, and we are still missing the requirement that the \emph{quotient} $\frac{C}{\mathfrak{N} \cap C}$ is also in $\mathcal{C}$.  And we must assume \textbf{something} about $\mathcal{C}$ if we want to derive its deconstructibility, since there are ZFC examples of filtration-closed $\mathcal{C}$ that are not deconstructible.\footnote{E.g., the $\aleph_1$-free abelian groups, see Section \ref{sec_Decon}.}  But the assumption that $\mathcal{C}$ is eventually a.e.\ closed under quotients provides a way, via Theorem \ref{thm_TurnStatToClub} from page \pageref{thm_TurnStatToClub}, to ``convert" the mere stationary set of $\mathfrak{N} \in \wp^*_\kappa(H_\theta)$ into a club, and moreover obtain the desired quotient behavior.  

To see that Theorem \ref{thm_VP_MaxDecon} implies Theorem \ref{thm_VP_XGP}, fix any ring $R$ and any class $\mathcal{X}$ of $R$-modules.  Using the notation of Section \ref{sec_Gorenstein}, 
\begin{equation}\label{eq_X_GP}
\mathcal{X} \text{-} \mathcal{GP} = \text{Cyc}\left( \text{Ac}(\mathcal{C},\mathcal{F}) \right),
\end{equation}
where $\mathcal{C}$ is the class of acyclic complexes with projective entries, and $\mathcal{F}$ is the collection of functors of the form $\text{Hom}(-,X)$, where $X$ ranges over all members of the class $\mathcal{X}$.  By Theorem \ref{thm_GorComplexAEclosure} and Lemma \ref{lem_GorComplexesSatisfy}, $\text{Ac}(\mathcal{C},\mathcal{F})$ is eventually a.e.\ closed under quotients in $\text{Ch}(R)$.  The class $\text{Ac}(\mathcal{C},\mathcal{F})$ is also (via purely homological arguments) closed under filtrations.  So by the Maximum Deconstructibility Principle, $\text{Ac}(\mathcal{C},\mathcal{F})$ is deconstructible in $\text{Ch}(R)$.  By Theorem \ref{thm_WeakDeconCycles}, the class \eqref{eq_X_GP} is weakly decontructible in $R$-Mod.  But it is also closed under filtrations (by the same argument as Enochs-Iacob-Overtoun~\cite{MR2342674} did when $\mathcal{X} = \text{Proj}$).  So $\mathcal{X}$-$\mathcal{GP}$ is deconstructible.

\section{Salce's Problem}\label{sec_Salce}

Recall the Eklof-Trlifaj Theorem \ref{thm_EklofTrlifaj}, and the variations of it (Theorem \ref{thm_Saroch_Cortes} and \ref{thm_CoxGeneralizeET}), which all dealt with trying to ensure that certain ``root of Ext" classes, i.e., those classes of the form
\[
{}^\perp \mathcal{B} = \{ A \ : \ \forall B \in B \ \text{Ext}(A,B)=0 \}
\]
for some class $\mathcal{B}$, were deconstructible.  In \cite{Cox_VP_CotPairs}, we proved consistency (relative to Vop\v{e}nka's Principle) that \textbf{every} root of Ext is deconstructible, at least if the ring is, say, a hereditary ring like $\mathbb{Z}$:  
\begin{theorem}[Cox~\cite{Cox_VP_CotPairs}]\label{thm_Cox_Salce}
If Vop\v{e}nka's Principle is consistent with ZFC, then it is consistent with ZFC that for every ring $R$:\footnote{The listed conclusions follow from the conjunction of Vop\v{e}nka's Principle with Jensen's Diamond principle holding at every stationary subset of every regular uncountable cardinal.  This can be forced over any model of Vop\v{e}nka's Principle, using a preservation theorem of Brooke-Taylor~\cite{MR2805294}.}  if
\begin{quote}
${}^\perp M$ is downward closed under elementary submodules for all $R$-modules $M$,\footnote{This is true for any \emph{hereditary} ring, such as $\mathbb{Z}$.  \v{S}aroch has since weakened this assumption on $R$.} 
 \end{quote}
 then all cotorsion pairs in $R$-Mod are both generated and cogenerated by a set.  Hence, for any cotorsion pair $\mathfrak{C}=(\mathcal{A},\mathcal{B})$ in $R$-Mod:
\begin{itemize}
 \item $\mathfrak{C}$ is complete\footnote{This means that every $R$-module is of the form $A/B$ for some $A \in \mathcal{A}$ and $B \in \mathcal{B}$.} (by \cite{MR1798574}), and
 \item $\mathcal{A}$ is deconstructible (by \cite{MR2985554}, Theorem 7.13)
\end{itemize}
\end{theorem}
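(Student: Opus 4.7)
The plan is to force, over the given model of ZFC + VP, to a model in which VP is preserved and additionally Jensen's $\diamondsuit_S$ holds for every stationary subset $S$ of every regular uncountable cardinal. Inside that model I would reduce the theorem to Maximum Deconstructibility (Theorem \ref{thm_VP_MaxDecon}) combined with a standard accessible-category consequence of VP.

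First I would set up the forcing: starting from $V \models \text{ZFC} + \text{VP}$, iterate (in class length) the natural forcings that add $\diamondsuit_S$ at each regular uncountable $\kappa$ and each stationary $S \subseteq \kappa$, arranged so as to fall under Brooke-Taylor's preservation theorem~\cite{MR2805294}. This produces a model $W$ of $\text{ZFC} + \text{VP}$ in which $\diamondsuit_S$ holds at every stationary set at every regular uncountable cardinal. Working inside $W$, I fix a ring $R$ satisfying the elementary-submodel-closure hypothesis and an arbitrary cotorsion pair $\mathfrak{C} = (\mathcal{A}, \mathcal{B})$ in $R$-Mod.

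Set-cogeneration of $\mathfrak{C}$ follows from VP alone, independently of the hypothesis on $R$ or of $\diamondsuit$: since $R$-Mod is locally presentable, VP implies (via the Ad\'{a}mek-Rosick\'{y} small-orthogonality theorem) that every orthogonality class in $R$-Mod is a small-orthogonality class, so $\mathcal{A} = {}^\perp \mathcal{B}_0$ for some set $\mathcal{B}_0 \subseteq \mathcal{B}$. For set-generation I would instead prove that $\mathcal{A}$ is deconstructible; then a set $\mathcal{A}_0 \subseteq \mathcal{A}$ with $\mathcal{A} = \text{Filt}(\mathcal{A}_0)$ gives $\mathcal{B} = \mathcal{A}_0^\perp$ via Eklof's Lemma \ref{lem_EklofLemma}, and the two bullet-point conclusions follow from \cite{MR1798574} and \cite{MR2985554} Theorem 7.13. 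To establish deconstructibility of $\mathcal{A}$ I would invoke Maximum Deconstructibility: filtration-closure of $\mathcal{A} = {}^\perp \mathcal{B}$ is Eklof's Lemma, and the remaining requirement is eventual a.e.\ closure under quotients. Given $\mathfrak{N} \prec \mathfrak{N}' \prec (H_\theta, \in)$ with transitive traces on a suitably large $\kappa$ and $C \in \mathfrak{N} \cap \mathcal{A}$, the hypothesis on $R$ together with the $\mathcal{L}_R$-elementarity of $\mathfrak{N} \cap C$ and $\mathfrak{N}' \cap C$ inside $C$ puts both traces into $\mathcal{A}$.

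The main obstacle will be to show that the quotient $Q := (\mathfrak{N}' \cap C)/(\mathfrak{N} \cap C)$ also lies in $\mathcal{A}$; this is the only place $\diamondsuit$ enters. Via the long exact Ext sequence of $0 \to \mathfrak{N} \cap C \to \mathfrak{N}' \cap C \to Q \to 0$ together with $\text{Ext}(\mathfrak{N}' \cap C, B) = 0$ for $B \in \mathcal{B}$, the vanishing $\text{Ext}(Q, B) = 0$ reduces to the extension property that every homomorphism $\mathfrak{N} \cap C \to B$ lifts to $\mathfrak{N}' \cap C$. I would establish this by a Shelah-style $\diamondsuit$-diagonalization: filter $\mathfrak{N}' \cap C$ over $\mathfrak{N} \cap C$ by a $\subseteq$-continuous chain of intermediate elementary traces indexed by an ordinal of cofinality $\text{cf}(|\mathfrak{N}'|)$, and use $\diamondsuit_S$ on a stationary $S$ in that cofinality to guess partial extensions $\mathfrak{N}_\alpha \cap C \to B$ and kill potential obstructions stage by stage; an obstruction that survived the $\diamondsuit$-guessing would, by a standard Ext-chasing argument, produce a $B^* \in \mathcal{B}$ witnessing $\text{Ext}(C, B^*) \neq 0$, contradicting $C \in \mathcal{A}$. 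Once this quotient step is in hand, Maximum Deconstructibility gives deconstructibility of $\mathcal{A}$ and the theorem follows.
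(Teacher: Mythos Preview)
The paper does not itself prove this theorem; it is cited from \cite{Cox_VP_CotPairs}, and the footnote only records that the argument combines VP with $\diamondsuit_S$ at every stationary $S$, forced over a model of VP via Brooke-Taylor's preservation theorem. Your forcing setup and your appeal to VP for set-cogeneration are consistent with that outline.

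The genuine gap is your $\diamondsuit$-argument for the quotient $Q=(\mathfrak{N}'\cap C)/(\mathfrak{N}\cap C)$. You correctly reduce $Q\in{}^\perp\mathcal{B}$ to showing that every $\varphi\colon\mathfrak{N}\cap C\to B$ extends to $\mathfrak{N}'\cap C$, and you propose to build the extension along an intermediate filtration, using $\diamondsuit$ to ``guess partial extensions and kill obstructions.'' But here $\varphi$, $B$, $C$, and the pair $(\mathfrak{N},\mathfrak{N}')$ are all fixed in advance; there is no recursive construction with free choices into which a $\diamondsuit$-sequence can be fed. The obstruction to extending at a successor stage lies in $\text{Ext}\big((\mathfrak{N}_{\alpha+1}\cap C)/(\mathfrak{N}_\alpha\cap C),B\big)$, and nothing you guess makes that group vanish. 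Your proposed endgame---that a surviving obstruction yields $B^*\in\mathcal{B}$ with $\text{Ext}(C,B^*)\ne 0$---also does not follow: applying the long exact sequence to $0\to\mathfrak{N}\cap C\to\mathfrak{N}'\cap C\to Q\to 0$, the vanishing of $\text{Ext}(\mathfrak{N}'\cap C,B)$ only tells you the connecting map $\text{Hom}(\mathfrak{N}\cap C,B)\to\text{Ext}(Q,B)$ is surjective, not that its target is zero, and there is no passage from $\text{Ext}(Q,B)\ne 0$ back to $\text{Ext}(C,-)$. In short, $\diamondsuit$ is a principle for \emph{constructing} modules with prescribed Ext behavior, not for proving that a fixed homomorphism extends; the route through Maximum Deconstructibility plus this diagonalization does not close.
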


Coupled with an earlier result of Eklof-Shelah~\cite{MR2031314}, Theorem \ref{thm_Cox_Salce} shows that a problem of Salce~\cite{MR565595}---whether every cotorsion pair in \textbf{Ab} is complete---is independent of ZFC.  And, while it is apparently still open (see e.g., Sl\'{a}vik-Trlifaj~\cite{MR3161764}) whether ZFC proves the existence of a non-deconstructible \emph{root of Ext} (i.e., class of the form ${}^\perp \mathcal{B}$), Theorem \ref{thm_Cox_Salce} gives a negative answer for hereditary rings.  Again, combined with Eklof-Shelah~\cite{MR2031314}, this shows that the statement
\begin{quote}
``There exists, over some hereditary ring, a non-deconstructible root of Ext"
\end{quote} 
is independent of ZFC.

Large cardinals have also appeared recently in settings that are tangentially related to deconstructibility, such as Ben Yassine-Trlifaj~\cite{yassine2024dualizations}, Cox-Rosick\'{y}~\cite{CoxRosicky}, and  \v{S}aroch-Trlifaj~\cite{MR4186456}.

\section{Questions}\label{sec_Questions}

Recall the Eklof-Trlifaj Theorem \ref{thm_EklofTrlifaj}, that ${}^\perp \mathcal{B}$ is deconstructible whenever $\mathcal{B}$ consists of pure-injective modules.  By Remark \ref{rem_WeakFormPureInj}, our new proof of their theorem works under the apparently weaker assumption that members of $\mathcal{B}$ are ``merely" injective with respect to very special kinds of pure embeddings; namely, when members of $\mathcal{B}$ are injective with respect to those inclusions of the form $\mathfrak{N} \cap A \to A$ where $\{ A,R \} \cup R \subset \mathfrak{N} \prec (H_\theta,\in)$.

It is unclear if this really results in a stronger theorem though.  To clarify the problem, recall that for a class $\mathcal{M}$ of monomorphisms, a module $B$ is defined to be $\boldsymbol{\mathcal{M}}$\textbf{-injective} if for every map $m: X \to Y$ in $\mathcal{M}$, the canonical map $m^*: \text{Hom}\big(Y,B \big) \to \text{Hom} \big(X,B \big)$ is surjective; i.e., every homomorphism $X \to B$ factors through $m$:
\[
\begin{tikzcd}
X \arrow[r, hook, "m"] \arrow[d] & Y \arrow[dl, dotted] \\
B & 
\end{tikzcd}
\]
Given a class $\mathcal{M}$ of monomorphisms, let $\mathcal{M}_\ell$ denote the class of morphisms $i$ such that $m = j \circ i$ for some $m \in \mathcal{M}$ and some monomorphism $j$.\footnote{E.g., if $m: X \to Y$ is an inclusion with $m \in \mathcal{M}$, and $X \subset Z \subset Y$, then the inclusion $X \to Z$ is a member of $\mathcal{M}_\ell$.}  And let $\widetilde{M}$ denote the closure under isomorphisms of $\mathcal{M}_\ell$ in the arrow category.  It is easy to check that $B$ is $\mathcal{M}$-injective if and only if $B$ is $\widetilde{\mathcal{M}}$-injective.  And if $\mathcal{M}$ is the class of pure embeddings, then $\mathcal{M} = \widetilde{\mathcal{M}}$.

For a ring $R$, define
\[
\mathcal{STT}_R:=\Big\{ \mathfrak{N} \cap A \to_{\text{incl}} A \ : \ \left( \exists \theta \in \text{Reg} \right) \left(  \{ A, R \} \cup R \subset \mathfrak{N} \prec (H_\theta,\in) \right) \Big\}.
\]
($\mathcal{STT}$ stands for ``set-theoretic trace").  In general,

\begin{equation}\label{eq_STT_strictlyContained}
\widetilde{\mathcal{STT}}_R \subsetneq \{ \text{pure embeddings of R-modules}  \}.
\end{equation}

\noindent The inclusion follows from Lemma \ref{lem_ElemPureReflect}.  But the inclusion is strict.  For example, consider the pure embedding $i: \mathbb{Z} \to \mathbb{Z} \oplus \mathbb{Z}$, $n \mapsto n \oplus 0$ in \textbf{Ab}.  We claim $i \notin \widetilde{\mathcal{STT}}$.  Suppose otherwise; then there would exist a commutative diagram
\[
\begin{tikzcd}
\mathbb{Z} \arrow[r, "i"] \arrow[d, "\simeq"] &  \mathbb{Z} \oplus \mathbb{Z} \arrow[d, "\simeq"] & \\
\mathfrak{N} \cap A \arrow[r, "\text{incl}"]  & B \arrow[r, "\text{incl}"] & A
\end{tikzcd}
\]
of homomorphisms whose vertical arrows are isomorphisms, and such that $A \in \mathfrak{N} \prec (H_\theta,\in)$.  But $\mathfrak{N} \cap A$ being isomorphic to $\mathbb{Z}$, together with the assumption that $A \in \mathfrak{N}$, implies $A = \mathfrak{N} \cap A \simeq \mathbb{Z}$.\footnote{Since $\mathfrak{N} \cap A \simeq \mathbb{Z}$, $\mathfrak{N} \cap A = \langle a \rangle$ for some $a \in \mathfrak{N} \cap A$.  Then $\mathfrak{N} \models$ ``for every $b \in A$, $b=na$ for some $n \in \mathbb{Z}$".  Since $\mathfrak{N} \prec (H_\theta,\in)$, this statement is true in $(H_\theta,\in)$ too.  So $A = \langle a \rangle = \mathfrak{N} \cap A$.}  But this is a contradiction, since $A\simeq \mathbb{Z}$ cannot contain a subgroup isomorphic to $\mathbb{Z} \oplus \mathbb{Z}$.

Even though \eqref{eq_STT_strictlyContained} is strict, it is unclear whether the following containment is strict: 
\begin{equation}\label{eq_PI_ETI}
\{ \text{ pure injective modules } \} \subseteq \{ \mathcal{STT}\text{-injective modules} \}.
\end{equation}
\noindent (Note the right side is the same as the class of $\widetilde{\mathcal{STT}}$-injective modules).

\begin{question}\label{q_STT_inj}
Is the inclusion \eqref{eq_PI_ETI} strict over some rings? 
\end{question}

An affirmative answer would mean that Remark \ref{rem_WeakFormPureInj} really does yield a strengthening of the Eklof-Trlifaj Theorem \ref{thm_EklofTrlifaj}.

\begin{question}\label{q_GP_LC}
Is the class $\mathcal{GP}_R$ always precovering (or always deconstructible) over every ring?  Assuming a proper class of strongly compact cardinals, the answer is yes (Cort{\'e}s-Izurdiaga and \v{S}aroch \cite{cortes2023module}).  It is not known whether ``there exists a ring $R$ such that $\mathcal{GP}_R$ is either not precovering, or not deconstructible" is even consistent with ZFC.
\end{question}

By the argument described in Section \ref{sec_MaxDecon}, to prove that $\mathcal{GP}_R$ is deconstructible, it would suffice to find a $\kappa > |R|$ and stationarily many $\mathfrak{N} \in \wp^*_\kappa(H_\theta)$ such that for any acyclic complex $P_\bullet \in \mathfrak{N}$ of projective modules, 
\[
P_\bullet \text{ is } \text{Hom}\left(-,\text{Proj} \right) \text{-acyclic} \ \implies \ P_\bullet \restriction \mathfrak{N} \text{ is } \text{Hom}\left(-,\text{Proj} \right) \text{-acyclic}.  
\]

\begin{question}
What is the exact consistency strength of the scheme ``Every cotorsion pair of classes of abelian groups is complete", or of the scheme ``Every class of roots of Ext in \textbf{Ab} is deconstructible"?  
\end{question}

By Cox~\cite{Cox_VP_CotPairs}, an upper bound is the consistency of Vop\v{e}nka's Principle.  By Eklof-Shelah~\cite{MR2031314}, a lower bound is failure of a kind of uniformization principle.

\appendix

\section{Simplified proof of the characterization of deconstructibility}\label{app_ProofDeconChar}

We prove the equivalence of \ref{item_KappaDecon} with \ref{item_MainChar} from Theorem \ref{thm_DeconChar}, which avoids some of the technical complications of the original proof from \cite{Cox_MaxDecon}.  These technical complications are avoided because of our assumption here that $|R|<\kappa$.  See \cite{Cox_MaxDecon} for more details, including how to get further equivalence with clauses \ref{item_DiagVersion2models} and \ref{item_non_diag}, and how to deal with situations where $\kappa \le |R|$ by utilizing the characterization of $\kappa$-Noetherian rings from Theorem \ref{thm_CharNoeth}.

\subsection{Proof of \ref{item_MainChar} $\implies$ \ref{item_KappaDecon}}

Assume \ref{item_MainChar}; we prove that $\mathcal{C}$ is weakly $\kappa$-deconstructible; by the background assumption that $\mathcal{C}$ is closed under filtrations, this will imply (by Lemma \ref{lem_FiltOfWeakDecon}) that $\mathcal{C}$ is $\kappa$-deconstructible.  

We prove by induction on $|C|$ that every $C \in \mathcal{C}$ has a $\mathcal{C}_{\kappa}$-filtration.  For $|C|<\kappa$ this is trivial.  Now assume $\lambda \ge \kappa$ and that every member of $\mathcal{C}$ of size $<\lambda$ has a $\mathcal{C}_\kappa$-filtration.  Fix a $C \in \mathcal{C}$ of size $\lambda$, fix a regular $\theta > \lambda$, and (using the assumption that $\lambda \ge \kappa$ and $\kappa$ is regular) use the Downward L\"owenheim-Skolem Theorem to recursively build a $\subseteq$-increasing, and $\subseteq$-continuous sequence
\[
\langle \mathfrak{N}_i \ : \ i < \text{cf}(\lambda) \rangle
\]
of elementary submodels of $(H_\theta,\in, \kappa, \mathcal{C} \cap H_\theta)$ such that $C \in \mathfrak{N}_0$, each $\mathfrak{N}_i \cap \kappa$ is transitive, each $\mathfrak{N}_i$ is of size $<\lambda$, each $\mathfrak{N}_i$ is an element (and subset) of $\mathfrak{N}_{i+1}$, and $C \subset \bigcup_{i < \text{cf}(\lambda)} \mathfrak{N}_i$.  

Consider any $i < \text{cf}(\lambda)$; since $\mathfrak{N}_i$ and $C$ are both elements of $\mathfrak{N}_{i+1}$, it follows that $\frac{C}{\mathfrak{N}_i \cap C} \in \mathfrak{N}_{i+1}$.  Then 
\begin{equation}\label{eq_i_iplus1_ratio}
\mathfrak{N}_{i+1} \cap \frac{C}{\mathfrak{N}_i \cap C} \ \underset{\text{(Lemma } \ref{lem_IsoQuotient}\text{)}}{\simeq} \ \frac{\mathfrak{N}_{i+1} \cap C}{\mathfrak{N}_{i+1} \cap (\mathfrak{N}_i \cap C)}  \underset{\text{(b/c } \mathfrak{N}_i \subset \mathfrak{N}_{i+1} \text{)}}{=}  \  \frac{\mathfrak{N}_{i+1} \cap C}{\mathfrak{N}_i \cap C}.
\end{equation}
Now $\frac{C}{\mathfrak{N}_i \cap C} \in \mathcal{C}$ by assumption \ref{item_MainChar} of the theorem (since $C \in \mathfrak{N}_i$); and then again by assumption \ref{item_MainChar} of the theorem---since $\frac{C}{\mathfrak{N}_i \cap C}$ is an element of $\mathfrak{N}_{i+1} \cap \mathcal{C}$---the left side of \eqref{eq_i_iplus1_ratio} is an element of $\mathcal{C}$.  So, we have shown that
\begin{equation}
\forall i < \text{cf}(\lambda) \ \ \frac{\mathfrak{N}_{i+1} \cap C}{\mathfrak{N}_i \cap C} \text{ is (isomorphic to) an element of } \mathcal{C}.
\end{equation}
Hence,
\begin{equation}\label{eq_model_filtration}
\left\langle \mathfrak{N}_i \cap C \ : \ i < \text{cf}(\lambda) \right\rangle
\end{equation}
is a $\mathcal{C}$-filtration of $C$.  Furthermore, observe that for each $i$, the quotient $\frac{\mathfrak{N}_{i+1} \cap C}{\mathfrak{N}_i \cap C}$ of adjacent members of the filtration of \eqref{eq_model_filtration} is a $<\lambda$-sized member of $\mathcal{C}$.  So by the induction hypothesis, $\frac{\mathfrak{N}_{i+1} \cap C}{\mathfrak{N}_i \cap C}$ can be $\mathcal{C}_\kappa$-filtered, say by $\vec{Z}^i$.  Then by an argument similar to that in the proof of \eqref{eq_FiltSquared} from page \pageref{eq_FiltSquared}, these $\vec{Z}^i$'s can be used to produce a $\mathcal{C}_\kappa$-filtration of $C$. 

\subsection{Proof of \ref{item_KappaDecon} $\implies$ \ref{item_MainChar}}

Assume \ref{item_KappaDecon}, and consider any 
\[
\mathfrak{N} \prec \mathfrak{A}:=(H_\theta,\in,\kappa, \mathcal{C} \cap H_\theta)
\]
such that $\mathfrak{N} \cap \kappa$ is transitive.  Suppose $C \in \mathfrak{N} \cap \mathcal{C}$.  By assumption \ref{item_KappaDecon}, there is a $\mathcal{C}_\kappa$-filtration of $C$.  Since $C \in H_\theta$, such a filtration is necessarily an element of $H_\theta$, and because $\kappa$ and $\mathcal{C} \cap H_\theta$ are included in the structure $\mathfrak{A}$, there is a $\mathcal{C}_\kappa$-filtration $\vec{C} = \langle C_i \ : \ i < \zeta \rangle$ of $C$ such that $\vec{C} \in \mathfrak{N}$.

To see that $\mathfrak{N} \cap C \in \mathcal{C}$, observe that by elementarity of $\mathfrak{N}$, 
\begin{equation}\label{eq_FiltAtN}
\langle \mathfrak{N} \cap C_i \ : \ i \in \mathfrak{N} \cap \zeta \rangle
\end{equation}
is a filtration of $\mathfrak{N} \cap C$:  clearly it is $\subseteq$-increasing, and a routine argument using elementarity of $\mathfrak{N}$ ensures that it is continuous.\footnote{I.e., if $i$ is a limit member of $\mathfrak{N} \cap \zeta$---i.e., $i \in \mathfrak{N} \cap \zeta$ and $i \cap \mathfrak{N}$ has no largest element---then $i$ must be a limit ordinal, and furthermore $\mathfrak{N} \cap C_i = \bigcup_{j \in \mathfrak{N} \cap i} \mathfrak{N} \cap C_j$.}  It remains to check that adjacent quotients of \eqref{eq_FiltAtN} are in $\mathcal{C}$, and then the background assumption that $\mathcal{C}$ is filtration-closed will ensure that its union, which is $\mathfrak{N} \cap C$, is in $\mathcal{C}$.  But by elementarity of $\mathfrak{N}$, if $i$ and $j$ are adjacent ordinals in $\mathfrak{N} \cap \zeta$, $j$ must equal $i+1$.  Furthermore, $|C_{i+1}/C_i|<\kappa$ by assumption, and $\mathfrak{N} \cap \kappa$ is transitive.  Hence $C_j/C_i$ is not only an element of $\mathfrak{N}$, but a subset of it too.  So
\[
\frac{\mathfrak{N} \cap C_{i+1}}{\mathfrak{N} \cap C_i} \underset{\text{(Lemma } \ref{lem_IsoQuotient} \text{)} }{\simeq} \mathfrak{N} \cap \frac{C_{i+1}}{C_i}  \underset{\text{(} C_{i+1}/C_i \subset \mathfrak{N} \text{)}}{=} \frac{C_{i+1}}{C_i} \in \mathcal{C}_\kappa.
\]

To see that $\frac{C}{\mathfrak{N} \cap C}$ is a member of $\mathcal{C}$:  again, by the background assumption that $\mathcal{C}$ is filtration-closed, it suffices to show that $\frac{C}{\mathfrak{N} \cap C}$ is $\mathcal{C}$-filtered.  We will prove that
\[
\left\langle \frac{(\mathfrak{N} \cap C) + C_i}{\mathfrak{N} \cap C} \ : \ i < \zeta \right\rangle
\]
is the desired $\mathcal{C}$-filtration.  Clearly it is a $\subseteq$-increasing and continuous sequence with union $\frac{C}{\mathfrak{N} \cap C}$.  It remains to check that for each $i < \zeta$,
\begin{equation}\label{eq_QuotientToShow}
 \frac{\frac{(\mathfrak{N} \cap C) + C_{i+1}}{\mathfrak{N} \cap C}}{\frac{(\mathfrak{N} \cap C) + C_i}{\mathfrak{N} \cap C}} \text{, which is isomorphic to } \frac{(\mathfrak{N} \cap C) + C_{i+1}}{(\mathfrak{N} \cap C) + C_{i}} \text{, is in } \mathcal{C}. 
\end{equation}

Note that by elementarity of $\mathfrak{N}$, either $i$ and $i+1$ are both elements of $\mathfrak{N}$, or neither $i$ nor $i+1$ is an element of $\mathfrak{N}$.  I.e., the following cases are exhaustive:
\begin{itemize}
 \item \textbf{Case 1: both $i$ and $i+1$ are elements of $\mathfrak{N}$.}  Since $\vec{C} \in \mathfrak{N}$, it follows that both $C_i$ and $C_{i+1}$ are elements of $\mathfrak{N}$.  In this case we show that the quotient in \eqref{eq_QuotientToShow} is trivial.\footnote{This is fine even if $0 \notin \mathcal{C}$, because we can thin out the filtration to eliminate the zero factors. In other words, we can without loss of generality assume $0 \in \mathcal{C}$.}  Since both $C_i$ and $C_{i+1}$ are elements of $\mathfrak{N}$ in this case, and since $|C_{i+1}/C_i|<\kappa$, by elementarity of $\mathfrak{N}$ there is an $X \in \mathfrak{N}$ of size less than $\kappa$ such that $C_{i+1} = X + C_i$.  Since $\mathfrak{N} \cap \kappa$ is transitive, $X$ is also a subset of $\mathfrak{N}$ by Fact \ref{fact_BasicFactsElemSub}.  Hence,
 \[
 \frac{(\mathfrak{N} \cap C) + C_{i+1}}{(\mathfrak{N} \cap C) + C_i} =  \frac{(\mathfrak{N} \cap C) + (X + C_i)}{(\mathfrak{N} \cap C) + C_i} = \frac{(\mathfrak{N} \cap C) +  C_i}{(\mathfrak{N} \cap C) + C_i} =0,
 \]
 where the middle equality is because $X \subset \mathfrak{N} \cap C$.

 \item \textbf{Case 2: neither $i$ nor $i+1$ is an element of $\mathfrak{N}$.}  In this case we show that the quotient in \eqref{eq_QuotientToShow} is isomorphic to $\frac{C_{i+1}}{C_i}$, which is in $\mathcal{C}$ by assumption.  Define
 \[
 \Phi: C_{i+1} \to \frac{(\mathfrak{N} \cap C) + C_{i+1}}{(\mathfrak{N} \cap C) + C_i}
 \]
by mapping $x$ to its coset.  The map $\Phi$ is homomorphic, and is clearly surjective (since $\mathfrak{N} \cap C$ also appears in the denominator of the codomain), so we just need to verify that $\text{Ker} \Phi = C_i$.  That $\text{Ker} \Phi \supseteq C_i$ is obvious.  For the other direction, assume $x \in \text{Ker} \Phi$.  Then
\begin{equation}\label{eq_WhatKerMeans}
x \in C_{i+1} \cap \Big( (\mathfrak{N} \cap C) + C_{i} \Big)
\end{equation}
In particular, $x = n + c_i$ for some $n \in \mathfrak{N} \cap C$ and some $c_i \in C_i$.  Now $x$ and $c_i$ are both elements of $C_{i+1}$, so $n=x-c_i$ is an element of $C_{i+1}$.  So
 \begin{equation}\label{eq_n_N_C_iplus1}
 n \in \mathfrak{N} \cap C_{i+1}.
 \end{equation}
Our case assumption that neither $i$ nor $i+1$ are elements of $\mathfrak{N}$, together with the fact that $\vec{C}$ is a filtration and $\vec{C} \in \mathfrak{N}$, implies that\footnote{Why: suppose $z \in \mathfrak{N} \cap C_{i+1}$, and let $\rho_z$ denote the least ordinal such that $z \in C_{\rho_z}$.  Then $\rho_z$ is definable in $\mathfrak{A}$ from the parameters $z$ and $\vec{C}$, and hence $\rho_z \in \mathfrak{N}$.   Clearly $\rho_z \le i+1$, but by our case assumption, it follows that $\rho_z \le i$ (in fact, $\rho_z < i$). }
\[
\mathfrak{N} \cap C_i = \mathfrak{N} \cap C_{i+1},
\]
and hence
\[
n \in \mathfrak{N} \cap C_i.
\]
And $c_i \in C_i$ by assumption.  Hence, $x = n+c_i$ is a member of $C_i$, completing the proof that $\text{Ker} \Phi \subseteq C_i$.
 
  \end{itemize}

\begin{remark}
We chose to show the proof that \ref{item_KappaDecon} implies \ref{item_MainChar} for simplicity, but the proof that \ref{item_KappaDecon} implies \ref{item_DiagVersion2models} is very similar to the one given above; one just uses $\mathfrak{N}' \cap C$ in place of $C$. 
\end{remark}

\begin{bibdiv}
\begin{biblist}
\bibselect{../../MasterBibliography/Bibliography}
\end{biblist}
\end{bibdiv}

\end{document}